\documentclass{amsart}
\usepackage{amsmath,amssymb,bm,amscd}
\usepackage[dvips]{graphicx}
%
\oddsidemargin 35.0pt
\evensidemargin 35.0pt
\textheight 600.9pt 
\textwidth  390.5pt
    %
\usepackage{amsmath}
\usepackage{amssymb}
\usepackage{amscd}
\usepackage{amsthm}
\usepackage[dvips]{graphicx}


\theoremstyle{definition}
\newtheorem{defn}{Definition}[section]

\newtheorem{example}[defn]{Example}
\newtheorem{rem}[defn]{Remark}
\theoremstyle{plain}
\newtheorem{thm}[defn]{Theorem}
\newtheorem{prop}[defn]{Proposition}
\newtheorem{lem}[defn]{Lemma}
\newtheorem{cor}[defn]{Corollary}

%


\newcommand{\Aut}{\operatorname{Aut}}
\newcommand{\Hom}{\operatorname{Hom}}
\newcommand{\id}{\operatorname{id}}

\numberwithin{equation}{section}
%
\title[A Khovanov type invariant derived from an unoriented HQFT]{A Khovanov type invariant derived from unoriented HQFT for links in thickened surfaces}
\author{Keiji Tagami}
\date{\today}
\address{
Department of Mathematics,
Tokyo Institute of Technology,
Oh-okayama, Meguro, Tokyo 152-8551, Japan
}
\email{tagami.k.aa@m.titech.ac.jp}
\begin{document}
\maketitle
\begin{abstract}
Two link diagrams on compact surfaces are strongly equivalent if they are related by Reidemeister moves and orientation preserving homeomorphisms of the surfaces. 
They are stably equivalent if they are related by the two previous operations and adding or removing handles. 
Turaev and Turner constructed a link homology for each stable equivalence class by applying an unoriented TQFT to a geometric chain complex similar to Bar-Natan's one. 
In this paper, by using an unoriented homotopy quantum field theory (HQFT), we construct a link homology for each strong equivalence class. 
Moreover, our homology yields an invariant of links (under ambient isotopy) in the oriented $I$-bundle of a compact surface. 
\end{abstract}
\section{Introduction}
An oriented $(d+1)$-dimensional topological quantum field theory (TQFT) \cite{atiyah1} assigns a module to each oriented closed $d$-dimensional manifold and assigns a homomorphism of modules to each oriented ($d+1$)-dimensional cobordism, satisfying certain axioms. 
If we do not assume that manifolds and cobordisms are oriented, then we call it an unoriented TQFT. 
\par
Turaev \cite{turaev:1999} defined the concept of homotopy quantum field theories (HQFTs) with target $X$, where $X$ is a connected topological space with a base point. 
An oriented HQFT assigns a module and a homomorphism of modules to each ``oriented $X$-manifold'' and ``oriented $X$-cobordism", respectively. 
An oriented $X$-manifold is a pair of an oriented 
manifold with some base points and a continuous map from the manifold to $X$. 
An oriented $X$-cobordism is a pair of an oriented cobordism with pointed boundaries and a continuous map from the cobordism to $X$. 
An unoriented HQFT is defined analogously. 
For any group $\pi$, Turaev \cite{turaev:1999} constructed a bijective correspondence between oriented ($1+1$)-dimensional HQFTs with target $X$ for $X=K(\pi, 1)$ and ``crossed $\pi$-algebras", where a crossed $\pi$-algebra $V$ is a Frobenius $\pi$-algebra endowed with a group homomorphism  $\varphi\colon\pi\rightarrow  \Aut (V)$. 
\par
The author \cite{tagami1} considered unoriented (1+1)-dimensional HQFTs with target $K(\pi, 1)$, where $\pi$ is an $\mathbf{F_{2}}$-vector space. 
For an $\mathbf{F_{2}}$-vector space $\pi$, he showed that there is a bijective correspondence between unoriented ($1+1$)-dimensional HQFTs with target $K(\pi, 1)$ and ``extended crossed $\pi$-algebras", where an extended crossed $\pi$-algebra $L$ is a crossed $\pi$-algebra endowed with a linear map $\Phi \colon L\rightarrow L$ and a family of elements $\{\theta_{\alpha} \in L_{1_{\pi}}\}_{\alpha\in\pi}$. 
\par
For each oriented link in $\mathbf{S}^{3}$, Khovanov \cite{khovanov1} defined a graded chain complex whose graded Euler characteristic equals the Jones polynomial of the link. 
Its homotopy class is a link invariant and its homology is called the Khovanov homology. 
Bar-Natan \cite{Bar-Natan-2} gave a geometric chain complex for the (original) Khovanov homology and explained it using an oriented TQFT. 
By using Bar-Natan's complex and an unoriented (1+1)-dimensional TQFT, Turaev and Turner \cite{turner-turaev:2006} constructed a link homology for each stable equivalence class (Definition~$\ref{stable}$) of link diagrams on surfaces. 
Unfortunately, their homology is not an extension of the Khovanov homology to stable equivalence classes. 
Manturov \cite{kh-virtual} gave an extension of the Khovanov complex to virtual links, that is stable equivalence classes, and 
Tubbenhauer \cite{Tubbenhauer} defined a homology for virtual links in the spirit of Bar-Natan's complex. 
Asaeda, Przytycki and Sikora \cite{link_homology_surface} also constructed link homologies for link diagrams on surfaces.  
Audoux \cite{surfaces_with_pulleys} considered ``surfaces with pulleys" and defined a link homology. 
\par
In this paper, for an oriented link diagram $D$ on an oriented compact surface $F$, we construct a link homology by using an unoriented HQFT with target $X=K(H_{1}(F;\mathbf{F_{2}}),1)$. 
The main idea is as follows: 
Firstly, we color each circle of smoothings of $D$ by the element in $H_{1}(F;\mathbf{F_{2}})$ represented by the circle. 
Since a circle labeled by an element in $\pi_{1}(X)=H_{1}(F;\mathbf{F_{2}})$ is regarded as an $X$-manifold (Remark~$\ref{color}$), we can regard each smoothing as an $X$-manifold (the manifold mapped to $X$ is the disjoint union of the circles resulting from the smoothing). 
Secondly, we use these $X$-manifolds in order to construct a geometric chain complex $([[(F,D)]]^{\ast}, \Sigma^{\ast})$ (defined in Section~$\ref{geometric_cpx}$) which is similar to Bar-Natan's  complex. 
Finally, we associate an HQFT $(A,\tau)$ to the complex and obtain a complex $(C_{(A,\tau)}^{\ast}(F,D), d_{(A,\tau)}^{\ast}):=(A([[(F,D)]]^{\ast}), \tau(\Sigma^{\ast}))$ (see Section~$\ref{link_homology2}$). 
\par
Our main result is the following theorem which is proved in Section~$\ref{link_homology2}$. 
\begin{thm}\label{homology_hqft}
Let $(F, D)$ be an oriented link diagram $D$ on an oriented compact surface $F$ and let $\pi=H_{1}(F; \mathbf{F_{2}})$. Let $(A,\tau)$ be an HQFT with target $K(\pi, 1)$ which preserves the S, T and 4-Tu relations given in Figure~$\ref{relation}$. 
We define 
\begin{align*}
H^{\ast}_{(A,\tau)}(F,D):=H(C_{(A,\tau)}^{\ast}(F,D), d_{(A,\tau)}^{\ast}). 
\end{align*}
Then $H^{\ast}_{(A,\tau)}(F,D)$ is an invariant of the oriented link in $F\times I$ represented by $(F, D)$ under ambient isotopy, where $I=[0,1]$.  
\end{thm}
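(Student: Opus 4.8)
The plan is to reduce the statement to invariance under the three Reidemeister moves, to prove that invariance first at the level of the geometric complex, and only afterwards to apply the HQFT. Recall that two link diagrams on $F$ represent ambient isotopic oriented links in $F\times I$ if and only if they are related by a finite sequence of Reidemeister moves of types R1, R2 and R3 performed inside disks on $F$, together with isotopies of $F$ itself. The latter only relabel the circles of a smoothing by the same homology classes in $\pi=H_{1}(F;\mathbf{F_{2}})$, since isotopic circles are homologous, and hence act trivially on the whole construction. Thus it suffices to exhibit, for each Reidemeister move relating $(F,D)$ and $(F,D')$, a chain homotopy equivalence between $(C_{(A,\tau)}^{\ast}(F,D),d_{(A,\tau)}^{\ast})$ and $(C_{(A,\tau)}^{\ast}(F,D'),d_{(A,\tau)}^{\ast})$, possibly after the usual grading shifts needed to absorb the writhe change in R1.

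First I would work entirely in the geometric world, following Bar-Natan. For each move I would produce a homotopy equivalence between the geometric complexes $[[(F,D)]]^{\ast}$ and $[[(F,D')]]^{\ast}$ in the category of formal complexes of $X$-cobordisms modulo the S, T and 4-Tu relations of Figure~$\ref{relation}$. Concretely, the cube of smoothings near the site of the move decomposes into pieces in which the extra strands or crossings give rise to acyclic subcomplexes that can be removed by Gaussian elimination: for R1 one deloops the small circle created by the twist and cancels using the sphere (S) and handle (T) evaluations; for R2 one identifies two new resolutions and cancels the isomorphic saddle pair; and for R3 one reduces to a composite of R2-type cancellations, in which the 4-Tu relation is exactly what forces the two sides to become homotopic rather than merely quasi-isomorphic. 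All of the homotopies and all of the maps realizing the cancellations are built from elementary $X$-cobordisms, namely cups, caps and saddles, each decorated with the appropriate map to $X$.

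The feature absent from the classical TQFT setting is that every circle and every cobordism now carries a homotopy class of maps to $X=K(\pi,1)$, equivalently a decoration by elements of $\pi$. I would check that the local models above respect these decorations. The circles created or destroyed by R1 and R2 bound disks inside the move disk, hence are null-homotopic in $F$ and carry the trivial class $1_{\pi}$, so the delooping and cancellation arguments go through verbatim with the $X$-structure present. For the saddles, and in particular for R3, one verifies that the homology classes of the affected circles add up correctly, so that the decorated elementary $X$-cobordisms compose to the identity decoration. I expect the decorated form of the 4-Tu relation to be the main technical point: there the group elements labeling the four local strands must be matched on both sides, and it is precisely this matching that makes the hypothesis ``$(A,\tau)$ preserves the 4-Tu relation'' the right one to invoke.

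Finally I would appeal to functoriality of the HQFT. By construction $C_{(A,\tau)}^{\ast}(F,D)=A([[(F,D)]]^{\ast})$ and $d_{(A,\tau)}^{\ast}=\tau(\Sigma^{\ast})$, and since $(A,\tau)$ preserves the S, T and 4-Tu relations it sends each geometric homotopy equivalence constructed above to a genuine chain homotopy equivalence of complexes of $A$-modules. Hence $H^{\ast}_{(A,\tau)}(F,D)$ and $H^{\ast}_{(A,\tau)}(F,D')$ are isomorphic whenever $D$ and $D'$ differ by a Reidemeister move, and therefore $H^{\ast}_{(A,\tau)}(F,D)$ depends only on the ambient isotopy class in $F\times I$ of the oriented link represented by $(F,D)$, as claimed.
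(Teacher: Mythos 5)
Your proposal is correct and takes essentially the same route as the paper: the paper likewise first establishes a homotopy equivalence of the geometric complexes in $\mathcal{M}at(\mathcal{UX}Cob(F)_{/r})$ under Reidemeister moves (Proposition~\ref{mainprop}, which outsources the Bar-Natan-style argument to Turner--Turaev, with the decorated S, T and $4$-Tu relations playing exactly the role you identify) and then applies the functor determined by $(A,\tau)$ to obtain Lemma~\ref{mainlem} and hence the theorem. The only place the paper is more careful than you is your dismissal of surface symmetries: instead of asserting that isotopies of $F$ ``act trivially,'' Lemma~\ref{mainlem} proves invariance under \emph{all} orientation-preserving homeomorphisms $f\colon (F,D)\rightarrow (F',D')$ (strong equivalence), constructing $f_{\sharp}$ from the induced $X$-homeomorphisms on the smoothings and verifying the chain-map property via the induced map $\tilde{f}$ on the cobordisms $\Sigma_{\varepsilon\rightarrow\varepsilon'}$ and naturality axiom $(4)$ of Definition~\ref{HQFT} --- which is also the honest way to make your relabeling claim rigorous.
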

\begin{rem}
Examples of such $(A, \tau)$ exist, see Sections~$\ref{link_homology1}$ and $\ref{remark}$. 
\end{rem}
\begin{rem}\label{main_rem}
Two oriented link diagrams on an oriented compact surface $F$ represent the same link in $F\times I$ if and only if they are strongly equivalent (see Definition~$\ref{stable}$). 
In order to prove Theorem~$\ref{homology_hqft}$, we show that the homology is an invariant of strong equivalence classes. 
Moreover, we will give a remark on link diagrams on a non-orientable surface and introduce that our homology is also an invariant of oriented links in an oriented $I$-bundle over a compact surface including non-orientable ones 
(see Remark~$\ref{non-orientable}$). 
\end{rem}
\par
Our construction satisfies the following duality relation which is analogous to a property of the Khovanov homology. We give a proof of Theorem~$\ref{dual}$ in Section~$\ref{section_dual}$. 
\begin{thm}\label{dual}
Let $(F,D)$ be an oriented link diagram $D$ on an oriented compact surface $F$ and let $(F,D^{!})$ be the link diagram obtained from $D$ by changing all crossings of $D$. 
For $\pi=H_{1}(F; \mathbf{F_{2}})$ and for any HQFT $(A,\tau)$ with target $K(\pi, 1)$ which preserves the S, T and 4-Tu relations given in Figure~$\ref{relation}$, there is an isomorphism of complexes 
\begin{align*}
C_{(A,\tau)}^{i}(F,D)\cong \{C_{(A,\tau)}^{i}(F,D^{!})\}^{\ast}, 
\end{align*}
where $\{C_{(A,\tau)}^{i}(F,D^{!})\}^{\ast}$ is the dual complex of $C_{(A,\tau)}^{i}(F,D^{!})$ {\rm (}the dual complex $((C^{\ast})^{i}, (d^{\ast})^{i})$ of a complex $(C^{i}, d^{i})$ over a ring $R$ is defined by $(C^{\ast})^{i}:=\Hom (C^{-i}; R)$ and $(d^{\ast})^{i}:=(d^{-i-1})^{\ast}\colon (C^{\ast})^{i}\rightarrow (C^{\ast})^{i+1}${\rm )}. 
\end{thm}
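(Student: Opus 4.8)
The plan is to build the isomorphism vertex by vertex over the cube of resolutions: match a smoothing of $D$ with the complementary smoothing of $D^{!}$, identify the associated modules through the Frobenius pairing of the underlying extended crossed $\pi$-algebra, and finally check that reversing a saddle turns the differential into its transpose.

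First I would fix the combinatorics. Writing $n$ for the number of crossings, a vertex of the cube of smoothings of $D$ is an element $v\in\{0,1\}^{n}$, and changing a crossing interchanges its $0$- and $1$-smoothings; hence $v\mapsto\bar v$ with $\bar v_{k}=1-v_{k}$ is a bijection from the vertices for $D$ onto those for $D^{!}$ under which the resulting $1$-manifolds on $F$, together with their $H_{1}(F;\mathbf{F_{2}})$-labels, are \emph{literally the same} $X$-manifold. Thus $[[(F,D)]]$ at $v$ and $[[(F,D^{!})]]$ at $\bar v$ agree, and $A$ assigns to them one and the same module. Because $|\bar v|=n-|v|$ while changing all crossings exchanges the numbers of positive and negative crossings, the homological shifts are arranged so that the summand of $C^{i}_{(A,\tau)}(F,D)$ indexed by $v$ is matched with the summand of $C^{-i}_{(A,\tau)}(F,D^{!})$ indexed by $\bar v$.

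Next I would dualize the modules. Since $\pi=H_{1}(F;\mathbf{F_{2}})$ is an $\mathbf{F_{2}}$-vector space, every $\alpha\in\pi$ satisfies $\alpha=\alpha^{-1}$, so the Frobenius pairing of the extended crossed $\pi$-algebra restricts to a nondegenerate pairing on each graded piece $L_{\alpha}$ and makes it self-dual. Tensoring over the circles of a smoothing, the module that $A$ assigns at $v$ becomes canonically isomorphic, via this pairing, to $\Hom(A(\bar v^{!});R)$. Collecting these summandwise isomorphisms gives a degreewise isomorphism $\phi^{i}\colon C^{i}_{(A,\tau)}(F,D)\xrightarrow{\ \cong\ }\{C^{i}_{(A,\tau)}(F,D^{!})\}^{\ast}$. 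It then remains to see that $\phi$ is a chain map, i.e.\ that it intertwines $d^{i}_{(A,\tau)}(F,D)$ with $(d^{-i-1}_{(A,\tau)}(F,D^{!}))^{\ast}$. The component of $\Sigma^{\ast}$ along the edge $v\to v+e_{k}$ of $D$ is a single saddle; under complementation this edge corresponds to the edge $\overline{v+e_{k}}\to\bar v$ of $D^{!}$, carrying the \emph{same} saddle but with its two ends interchanged. The decisive algebraic input is that, for an HQFT built on a Frobenius $\pi$-algebra, $\tau$ sends a cobordism and its reversal to maps that are mutually adjoint for the Frobenius pairing (a merging saddle inducing a multiplication is sent by reversal to a splitting saddle inducing the adjoint comultiplication, and conversely), and the S, T and $4$-Tu relations, being preserved by $(A,\tau)$, guarantee these maps descend to the quotient complex. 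Under the identifications above this says exactly that each edge-component of $d^{i}_{(A,\tau)}(F,D)$ is the transpose of the corresponding edge-component of $d^{-i-1}_{(A,\tau)}(F,D^{!})$.

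I expect the main obstacle to be sign bookkeeping rather than anything conceptual: the signs inserted into $\Sigma^{\ast}$ to force $d\circ d=0$ do not automatically match after complementation and transposition. I would absorb the discrepancy by twisting $\phi^{i}$ on the summand indexed by $v$ by a sign depending only on $v$ (a standard choice such as $(-1)^{\binom{|v|}{2}}$), and the one computation requiring care is to verify that a single such assignment reconciles the two edge-sign conventions uniformly across the cube. Once the signs agree, assembling the summandwise isomorphisms yields the asserted isomorphism of complexes.
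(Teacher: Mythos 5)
Your proposal is correct and follows essentially the same route as the paper: complementing the cube of smoothings ($\varepsilon\mapsto\tilde{\varepsilon}$, giving $A(D^{!}_{\tilde{\varepsilon}})=A(D_{\varepsilon})$ with the degree shift matched by $n_{-}(D^{!})=n_{+}(D)$), identifying each module with its dual via $v\mapsto\eta(v\otimes\cdot)$, and using adjointness of the saddle maps under the Frobenius pairing — which is exactly the content of the paper's Lemma~$\ref{duallem}$, asserting that $\Psi(v)=\eta(v\otimes\cdot)$ is an isomorphism onto the dual extended crossed $\pi$-algebra $\tilde{L}$. The only divergence is your sign bookkeeping at the end, which is unnecessary in the paper's setting since the construction there is carried out over $\mathbf{F_{2}}$, where all signs vanish.
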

Let $D$ be a plus-adequate diagram on the plane with $n_{-}$ negative crossings. Then the $-n_{-}$-th homological degree term of the Khovanov homology of $D$ is not zero (see Proposition~$36$ in \cite{khovanov1}). 
For a special HQFT $(A,\tau)$, our homology has a similar property as follows: 
\begin{thm}\label{non_vanish}
Let $(F,D)$ be an oriented link diagram $D$ on an oriented compact surface $F$ with $n_{-}(D)$ negative crossings and let $(A, \tau)$ be the unoriented HQFT constructed in Proposition~$\ref{prophqft}$. 
Then $H^{-n_{-}(D)}_{(A,\tau)}(F,D)\neq 0$ if and only if $(F,D)$ is weak plus-adequate. 
\end{thm}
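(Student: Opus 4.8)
The plan is to compute $H^{-n_-(D)}_{(A,\tau)}(F,D)$ directly as the kernel of the first differential and to detect its (non)vanishing through a single distinguished cocycle. First I would record that, by the construction of Section~\ref{link_homology2}, the complex $C^{\ast}_{(A,\tau)}(F,D)$ is supported in homological degrees $-n_-(D)\le i\le n_+(D)$, with the term in the lowest degree $-n_-(D)$ equal to the value of $(A,\tau)$ on the all-$0$ smoothing of $D$. Since there is no term in degree $-n_-(D)-1$, we have
\begin{align*}
H^{-n_-(D)}_{(A,\tau)}(F,D)=\ker\big(d^{-n_-(D)}_{(A,\tau)}\big).
\end{align*}
The target $C^{-n_-(D)+1}_{(A,\tau)}(F,D)$ is the direct sum, over the $n(D)=n_+(D)+n_-(D)$ crossings, of the modules attached to the smoothings obtained from the all-$0$ smoothing by switching a single crossing to its $1$-smoothing, and $d^{-n_-(D)}_{(A,\tau)}$ is the corresponding signed sum of edge maps $d_c$. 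As these edge maps land in distinct summands, the signs are irrelevant to the kernel and
\begin{align*}
\ker\big(d^{-n_-(D)}_{(A,\tau)}\big)=\bigcap_{c}\ker(d_c),
\end{align*}
the intersection running over all crossings $c$.

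Next I would analyze each edge map $d_c$. Switching the crossing $c$ from its $0$- to its $1$-smoothing in the all-$0$ smoothing either merges two distinct circles or splits one circle into two; accordingly $d_c$ is, up to the identity on the untouched circles, the multiplication $\mu$ or the comultiplication $\delta$ of the extended crossed $\pi$-algebra underlying $(A,\tau)$ from Proposition~\ref{prophqft}. Writing $x$ for the socle generator of the module attached to a circle (in classical terms the element $X$ with $X^{2}=0$), the decisive local facts are: for the specific HQFT of Proposition~\ref{prophqft} every merge sends the pair of socle generators to $0$, while a split of a circle along $c$ sends its socle generator $x$ to $x\otimes x$ when the two resulting circles are both null-homologous in $F$ and to $0$ when at least one of them is homologically non-trivial. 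This last dichotomy is exactly where the colouring by $\pi=H_{1}(F;\mathbf{F_{2}})$ and the extra data $(\Phi,\{\theta_{\alpha}\})$ of the extended crossed $\pi$-algebra enter, and verifying it from Proposition~\ref{prophqft} is the technical heart of the argument; it is also where I expect the main obstacle to lie, since one must pin down the precise comultiplication component indexed by the homology classes of the two outgoing circles rather than the full comultiplication, and check that on the homologically non-trivial components it annihilates $x$.

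Granting these computations, both implications follow quickly. For the ``if'' direction, suppose $(F,D)$ is weak plus-adequate, so that no crossing of the all-$0$ smoothing is a self-crossing splitting a null-homologous circle into two null-homologous circles. Then the tensor product $\mathbf{x}$ of the socle generators $x$ over all circles of the all-$0$ smoothing is annihilated by every merge map and by every admissible split map, hence $\mathbf{x}\in\bigcap_{c}\ker(d_c)$; as $\mathbf{x}\neq 0$, we get $H^{-n_-(D)}_{(A,\tau)}(F,D)\neq 0$. For the ``only if'' direction, suppose $(F,D)$ is not weak plus-adequate, so that some crossing $c_{0}$ splits a null-homologous circle into two null-homologous circles. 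Then $d_{c_{0}}$ is, up to the identity on the other circles, the classical comultiplication $\delta\colon V\to V\otimes V$ on the module $V=\mathbf{F_{2}}[X]/(X^{2})$ of a null-homologous circle, with $\delta(X)=X\otimes X$; this map is injective, and since tensoring an injective map over the field $\mathbf{F_{2}}$ with an identity preserves injectivity, $\ker(d_{c_{0}})=0$, whence $\bigcap_{c}\ker(d_c)=0$ and $H^{-n_-(D)}_{(A,\tau)}(F,D)=0$. The only remaining point is the combinatorial translation identifying ``weak plus-adequate'' with the absence of such a null-homologous self-split, which I would read off directly from the definition of weak plus-adequacy.
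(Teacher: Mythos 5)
Your skeleton (identify $H^{-n_-(D)}$ with $\ker d^{-n_-(D)}$, decompose this kernel as $\bigcap_c\ker d_c$, kill it by an injective edge map in the inadequate case, exhibit an explicit cocycle in the adequate case) is the same as the paper's, but two of your ``decisive local facts'' are wrong, and the error propagates into both implications. First, in the HQFT of Proposition~\ref{prophqft} a circle with nontrivial class $\alpha\in H_{1}(F;\mathbf{F_{2}})$ carries the module $L_{\alpha}$ with basis $y_{\alpha},z_{\alpha}$, not $\mathbf{F_{2}}[X]/(X^{2})$; there is no ``socle generator $x$'' there, and the correct distinguished vector is $y_{\alpha}+z_{\alpha}$ (the paper's element $k$ is the tensor of $x$ over the $0$-homologous circles and $y_{\alpha}+z_{\alpha}$ over the others). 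Second, your dichotomy ``a split sends the distinguished element to $x\otimes x$ if both outgoing circles are null-homologous and to $0$ if at least one is nontrivial'' is false: $\Delta_{0,\alpha}(y_{\alpha}+z_{\alpha})=x\otimes(y_{\alpha}+z_{\alpha})\neq 0$, and indeed $\Delta_{0,\alpha}$ and $\Delta_{\alpha,0}$ are \emph{injective} ($y_{\alpha}\mapsto x\otimes y_{\alpha}$, $z_{\alpha}\mapsto x\otimes z_{\alpha}$). Correspondingly, your combinatorial translation of weak plus-adequacy is wrong: by Definition~\ref{condition} the forbidden configurations are all splits in which \emph{some} outgoing circle is $0$-homologous, which besides your case $\Delta_{0,0}$ includes a homologically nontrivial circle splitting into a $0$-homologous circle and a nontrivial one. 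With your weaker reading, the ``if'' direction would assert $H^{-n_-(D)}_{(A,\tau)}(F,D)\neq 0$ for a diagram whose all-$0$ smoothing admits such a $\Delta_{0,\alpha}$-type split --- false, since that edge map is injective and the kernel vanishes --- and the ``only if'' direction never treats this case. The paper's necessity argument is precisely that $\Delta_{0,0}$, $\Delta_{0,\alpha}$ and $\Delta_{\alpha,0}$ are the injective comultiplication components, so a nonzero kernel forces every split to have both outputs homologically nontrivial.

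A smaller but real omission: you claim each edge map is a merge or a split, but on a surface a saddle can carry one circle to one circle (the fourth bullet of Definition~\ref{condition}; the corresponding cobordism in $F\times I$ is nonorientable, which is why an unoriented HQFT is needed, cf.\ Example~\ref{computation4} where $d^{0}=(\theta_{\alpha},\theta_{\alpha})$). For this HQFT such edges act by multiplication by $\theta$, which is $0$, so they do not disturb the kernel --- but they belong both to your case analysis and to the verification of the cocycle. Once you replace your element $\mathbf{x}$ by the paper's $k$ and your local facts by the correct ones, namely $x\cdot x=0$, $x\cdot(y_{\alpha}+z_{\alpha})=0$, $(y_{\alpha}+z_{\alpha})(y_{\alpha}+z_{\alpha})=2x=0$, $(y_{\alpha}+z_{\alpha})(y_{\beta}+z_{\beta})=0$, $\Delta_{\alpha,\alpha}(x)=0$, $\Delta_{\alpha,\beta}(y_{\alpha+\beta}+z_{\alpha+\beta})=0$ over $\mathbf{F_{2}}$, and $\theta_{\alpha}=0$, your argument closes and coincides with the paper's proof.
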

The description of a weak plus-adequate diagram is given in Definition~$\ref{condition}$. 
If a diagram is plus-adequate, the diagram is weak plus-adequate. 
We prove Theorem~$\ref{non_vanish}$ in Section~$\ref{section_nonvanish}$. 
%
For more on the minimal homological degree of Khovanov homology, see \cite{stosic2}, \cite{tagami3} and \cite{tagami2}. 
\par
This paper is organized as follows: 
In Section~$\ref{preliminaries}$, we recall the definitions of HQFTs and some algebras introduced in \cite{turaev:1999}, as well as those of unoriented HQFTs and extended crossed group algebras defined in \cite{tagami1}. 
In Sections~$\ref{geometric_cpx}$, we introduce a geometric chain complex of $X$-cobordisms. 
In Section~$\ref{link_homology}$, we establish our link homology and prove some of its properties, including Theorems~$\ref{homology_hqft}$, $\ref{dual}$ and $\ref{non_vanish}$. 
In Section~$\ref{section_example}$, we explain a combinatorial computation of our homology and some examples. 
In Section~$\ref{remark}$, we make some remarks on our homology theory.  
\par
Throughout this paper, the symbol $R$ denotes a commutative ring with unit.  
For a group $\pi$ and two elements $a,b\in\pi$, we denote the product by ``$ab$" and the unit by ``$1$". 
If $\pi$ is an abelian group, we will use ``$a+b$" and ``$0$" instead. 
In our pictures, the source of a cobordism is the bottom and the target is the top. 
\section{Unoriented HQFTs and extended crossed group algebras}\label{preliminaries}
Here we will explain terminologies used in this paper. 
\subsection{Unoriented HQFTs}
In this subsection, we recall the definition of unoriented homotopy quantum field theories (HQFTs). An oriented HQFT is introduced by Turaev \cite{turaev:1999}.  
\begin{defn}
Let $X$ be a CW-complex. We call $X$ the {\it Eilenberg-Mac Lane space of type $K(\pi$, $1)$} corresponding to a group $\pi$ ($K(\pi$, $1)$ space for short) if its homotopy group $\pi_{n}(X)=\pi$ with $n=1$ and $\pi_{n}(X)=0$ with $n\neq 1$. 
It is well known that such a CW-complex is unique up to homotopy equivalence. 
\end{defn}
\begin{defn}[\cite{turaev:1999}]
A topological space is {\it pointed} 
if each of its connected components has a base point. 
A map between pointed spaces is a continuous map preserving their base points. 
Homotopies of such maps are always supposed to be constant on the base points.
\end{defn}
\begin{rem}
In the standard definition of a pointed space, it has only one base point. 
However, in our description, a pointed space may have some base points. 
For example, a pointed space with $k$ components has $k$ base points. 
\end{rem}
\begin{defn}[\cite{turaev:1999}]
Let $X$ be a connected topological space with a base point $x_{0}\in X$. 
A pair {\rm(}$M$, $g_{M}${\rm)} is called an {\it unoriented $X$-manifold} if $M$ is a pointed closed unoriented manifold and $g_{M}$ is a continuous map from $M$ to $X$. 
We call the map $g_{M}$ the {\it characteristic map}. 
Since the spaces M and X are pointed, the map $g_{M}$ sends the base points of all components of $M$ to $x_{0}$. 
A disjoint union of unoriented $X$-manifolds and the empty set are also unoriented $X$-manifolds. 
An {\it unoriented $X$-homeomorphism} of unoriented $X$-manifolds $f\colon(M, g_{M})\rightarrow (M', g_{M'})$ is a homeomorphism from $M$ to $M'$ sending the base points of $M$ to those of $M'$ such that $g_{M}=g_{M'}\circ f$. 
\end{defn}
\par
\begin{defn}[\cite{turaev:1999}]
Let $X$ be a connected topological space with a base point $x_{0}\in X$. A triple {\rm(}$W, M_{0}, M_{1}${\rm)} is called an {\it unoriented cobordism} when $W$ is a compact manifold whose boundary is the disjoint union of pointed closed manifolds $M_{0}$ and $M_{1}$. 
An {\it unoriented $X$-cobordism} is a tuple {\rm(}$W, M_{0}, M_{1}, g${\rm)} such that the triple {\rm(}$W, M_{0}, M_{1}${\rm)} is an unoriented cobordism and that $g\colon W\rightarrow X$ is a  continuous map which sends the base points of $M_{0}$ and $M_{1}$ to $x_{0}\in X$. 
We call the boundary $M_{0}$ the {\it bottom base}, $M_{1}$ the {\it top base} and the map $g$ the {\it characteristic map}. 
An {\it unoriented $X$-homeomorphism} of $X$-cobordisms $f\colon (W, M_{0}, M_{1}, g)\rightarrow (W', M'_{0}, M'_{1}, g')$ is a homeomorphism from $W$ to $W'$ inducing unoriented $X$-homeomorphisms $M_{0}\rightarrow M'_{0}$ and $M_{1}\rightarrow M'_{1}$ such that $g=g'\circ f$. 
\end{defn}
\begin{defn}[\cite{turaev:1999}]\label{HQFT}
Fix an integer $d \geq 0$ and a path connected topological space $X$ with a base point $x_{0} \in X$. An {\it unoriented {\rm(}$d+1${\rm)}-dimensional homotopy quantum field theory} {\rm(}HQFT for short {\rm)} {\rm(}$A, \tau${\rm)} over $R$ with target $X$ assigns 
\begin{itemize}
\item a finitely generated projective $R$-module $A(M, g)$ {\rm(}$A(M)$ for short{\rm)} to any unoriented $d$-dimensional $X$-manifold $(M, g)$, 
\item an $R$-isomorphism $f_{\sharp}\colon A(M,g)\rightarrow A(M', g')$ to any unoriented $X$-homeomorphism of $d$-dimensional $X$-manifolds $f\colon (M, g)\rightarrow (M', g')$, 
\item an $R$-homomorphism $\tau(W, g)\colon A(M_{0}, g|_{M_{0}})\rightarrow A(M_{1}, g|_{M_{1}})$ to any {\rm(}$d+1${\rm)}-dimensional $X$-cobordism {\rm(}$W, M_{0}, M_{1}, g${\rm)}. 
\end{itemize}
Moreover these modules and homomorphisms should satisfy the following axioms: 
\par
$(1)$ for unoriented $X$-homeomorphisms of unoriented $X$-manifolds $f\colon M\rightarrow M'$ and $f'\colon M'\rightarrow M''$, we have $(f'\circ f)_{\sharp}=f'_{\sharp}\circ f_{\sharp}$, 
\par
$(2)$ for unoriented $d$-dimensional $X$-manifolds $M$ and $N$, there is a natural isomorphism $A(M\sqcup N)=A(M)\otimes A(N)$, where $M\sqcup N$ is the disjoint union of $M$ and $N$, 
\par
$(3)$ $A(\emptyset)=R$, 
\par
$(4)$ for any unoriented $X$-cobordism $W$, the homomorphism $\tau(W)$ is natural with respect to unoriented $X$-homeomorphisms, 
that is, for any unoriented $X$-homeomorphism $f\colon (W,M_{0},M_{1},g)\rightarrow (W',M'_{0},M'_{1},g')$ of $X$-cobordisms, the following diagram is commutative: 
\begin{equation*}
\begin{CD}
A(M_{0})  @>(f|_{M_{0}})_{\sharp}>> A(M'_{0}) \\
@VV\tau(W)V    @VV\tau(W')V  \\
A(M_{1})  @>(f|_{M_{1}})_{\sharp}>>  A(M'_{1}) 
\end{CD}
\end{equation*}
\par
$(5)$ if an unoriented $(d+1)$-dimensional $X$-cobordism $(W, M_{0}, M_{1}, g)$ is the disjoint union of two unoriented $(d+1)$-dimensional $X$-cobordisms $W_{0}$ and $W_{1}$, then $\tau(W)=\tau(W_{1})\otimes \tau(W_{0})$, 
\par
$(6)$ if an oriented $(d+1)$-dimensional $X$-cobordism $(W, M_{0}, M_{1}, g)$ is obtained from two $(d+1)$-dimensional $X$-cobordisms $(W_{0}, M_{0}, N)$ and $(W_{1}, N', M_{1})$ by gluing along $f\colon N\rightarrow N'$, then $\tau(W)=\tau(W_{1})\circ f_{\sharp} \circ \tau(W_{0})$, 
\par
$(7)$ for any unoriented $d$-dimensional $X$-manifold {\rm(}$M, g${\rm)} and for any continuous map $F\colon M\times [0,1]\rightarrow X$ such that $F|_{M\times 0}=F|_{M\times 1}=g$ and that $F(\{m\}\times [0,1])=\{x_{0}\}$ for any base point $m$ of $M$, we have $\tau(M\times [0,1], M\times 0, M\times 1, F)=\id_{A(M)}\colon A(M)\rightarrow A(M)$, 
\par
$(8)$ for any unoriented $(d+1)$-dimensional X-cobordism {\rm(}$W, g${\rm)}, the map $\tau(W)$ is preserved under any homotopy of $g$ relative to $\partial{W}$. 
\end{defn}
\begin{rem}[\cite{turaev:1999}]\label{rel_boud}
If $f$ and $f'\colon M\rightarrow X$ are homotopic, there is a natural isomorphism $\tau(M\times [0,1], F)\colon A(M, f)\cong A(M, f')$, where $F$ is the homotopy. Hence we can suppose that $A(M, f)$ is preserved under any homotopy of $f$. Similarly $\tau(W, g)$ is preserved under any homotopy of $g$ (may not be relative to $\partial{W}$). 
\end{rem}
\begin{rem}[\cite{tagami1}]\label{color}
Suppose that $X=K(\pi, 1)$ with an $\mathbf{F_{2}}$-vector space $\pi$. 
Let $\mathbf{S}^{1}$ be an unoriented circle and $g\colon \mathbf{S}^{1}\rightarrow X$ a continuous map. 
Then we can regard the homotopy class of $g$ as an element $\alpha\in\pi$ since $X=K(\pi, 1)$ and $\pi$ is an $\mathbf{F_{2}}$-vector space. 
In the sense of Remark~$\ref{rel_boud}$, we denote the unoriented $X$-manifold $(\mathbf{S}^{1}, g)$ by $(\mathbf{S}^{1}, \alpha)$. 
\par
Analogously, we represent an unoriented ($1+1$)-dimensional $X$-cobordism as an unoriented  ($1+1$)-dimensional cobordism with some arcs and loops labeled by elements in $\pi$. 
For example, see the $X$-cobordism depicted in Figure~$\ref{x-cobordism}$. Its top base is an $X$-manifold $(\mathbf{S}^{1}, \alpha+\beta)$ and its bottom base is the disjoint union of two $X$-manifolds $(\mathbf{S}^{1}, \alpha)$ and $(\mathbf{S}^{1}, \beta)$. 
Its characteristic map sends each labeled arc or loop to the loop on $X$ corresponding to the label. 
Since $X=K(\pi, 1)$, such a characteristic map is uniquely determined up to homotopy. 
\begin{figure}[!h]
\begin{center}
\includegraphics[scale=0.2]{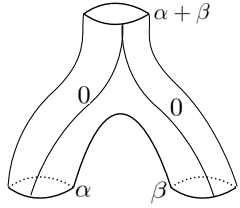}
\end{center}
\caption{Example of $X$-cobordism. }
\label{x-cobordism}
\end{figure}
\end{rem}
\subsection{Extended crossed group algebras}
In this subsection, we recall some algebras which are introduced in \cite{turaev:1999} and \cite{tagami1}. 
\begin{defn}
An $R$-algebra $L$ is a {\it $\pi$-algebra} over $R$ if $L$ is an associative algebra over $R$ endowed with a splitting $L=\bigoplus_{\alpha  \in \pi}L_{\alpha}$ such that each $L_{\alpha}$ is a finitely generated projective $R$-module, that $L_{\alpha}L_{\beta}\subset L_{\alpha \beta }$ for any $\alpha, \beta \in\pi$, and that $L$ has the unit element $1_{L}\in L_{1}$. 
\end{defn}
Let $V$ and $W$ be $R$-modules and $\eta\colon V\otimes W\rightarrow R$ a bilinear form. The map $\eta$ is non-degenerate if the two maps $d\colon V\rightarrow \Hom_{R}(W, R)$ defined by $d(v)(w)=\eta(v, w)$ and $s\colon W\rightarrow \Hom_{R}(V, R)$ defined by $s(w)(v)=\eta(v, w)$ are isomorphisms, where $v\in V$ and $w\in W$. 
\begin{defn}[\cite{turaev:1999}]\label{frobenius}
A pair $(L, \eta)$ is a {\it Frobenius $\pi$-algebra} over $R$ if $L$ is a $\pi$-algebra over $R$ and $\eta \colon L_{\alpha }\otimes L_{\beta}\rightarrow R $ is an $R$-bilinear form such that 
\par
$(1)$
$\eta(L_{\alpha}\otimes L_{\beta})=0$ if $\alpha \beta \neq 1$ and the restriction of $\eta$ to $L_{\alpha }\otimes L_{\alpha^{-1} }$ is non-degenerate for any $\alpha \in\pi$, 
\par
$(2)$
$\eta(ab, c)=\eta(a, bc)$ for any $a, b, c\in L$. 
\end{defn}
A Frobenius $\pi$-algebra with a trivial group $\pi$ is called a {\it Frobenius algebra} \cite{abrams:1996}. 
\par
For any Frobenius $\pi$-algebra $(L, \eta)$, we denote the group by $\Aut(L)$ which consists of algebra automorphisms preserving $\eta$. 
\begin{defn}[\cite{turaev:1999}]\label{cross}
A triple $(L, \eta, \varphi)$ is a {\it crossed $\pi$-algebra} over $R$ if the pair $(L, \eta)$ is a Frobenius $\pi$-algebra over $R$ and $\varphi\colon \pi\rightarrow\Aut(L)$ is a group homomorphism satisfying the following axioms:
\par
$(1)$
for any $\beta\in\pi$, the map $\varphi_{\beta}:=\varphi(\beta)$ is an algebra automorphism of $L$ preserving $\eta$ and satisfying $\varphi_{\beta}(L_{\alpha})\subset L_{\beta \alpha \beta ^{-1}}$ for any $\alpha \in\pi$, 
\par
$(2)$
$\varphi_{\alpha}|_{L_{\alpha}}=\id_{L_{\alpha}}$ for any $\alpha \in\pi$, 
\par
$(3)$
for any $a\in L_{\alpha}$ and $b\in L_{\beta}$, we have $\varphi_{\beta}(a)b=ba$, 
\par
$(4)$
for any $\alpha, \beta\in\pi$ and any $c\in L_{\alpha \beta \alpha^{-1} \beta^{-1} }$, we have $\operatorname {Tr}(c\varphi_{\beta}\colon L_{\alpha }\rightarrow L_{\alpha })=\operatorname {Tr}(\varphi_{\alpha^{-1}}c\colon L_{\beta}\rightarrow L_{\beta})$, where {\rm Tr} is the $R$-valued trace of endmorphisms of finitely generated projective $R$-modules (see for instance \cite{quantum-turaev}). 
\end{defn}
Turaev \cite{turaev:1999} showed that there exists a bijective correspondence between oriented HQFTs with target $K(\pi, 1)$ space and crossed $\pi$-algebras. 
\begin{thm}[Theorem~$4.1$ in {\cite{turaev:1999}}]\label{turaev}
Let $\pi$ be a group and $X$ the $K(\pi, 1)$ space. Then every oriented $(1+1)$-dimensional HQFT with target $X$ over $R$ determines an underlying crossed $\pi$-algebra over $R$. This induces a bijection between the set of isomorphism classes of oriented $(1+1)$-dimensional HQFTs and the set of isomorphism classes of crossed $\pi$-algebras. 
\end{thm}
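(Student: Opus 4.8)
The plan is to read the crossed $\pi$-algebra straight off the generating data of the HQFT, and then to establish the bijection by reducing every $X$-cobordism to a composite of elementary pieces. Throughout, write $L_{\alpha}:=A(\mathbf{S}^{1}, \alpha)$ for the module attached to the circle carrying the label $\alpha\in\pi$ (via $X=K(\pi,1)$, based homotopy classes of maps $\mathbf{S}^{1}\to X$ are exactly the elements of $\pi$), and set $L=\bigoplus_{\alpha\in\pi}L_{\alpha}$. By axioms $(2)$ and $(3)$ we have $A(\emptyset)=R$ and $A$ carries disjoint unions to tensor products, so a $(1+1)$-dimensional $X$-cobordism between labeled circles is sent to an $R$-homomorphism between tensor powers of the $L_{\alpha}$.

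First I would define the structure maps from elementary $X$-cobordisms. The multiplication $L_{\alpha}\otimes L_{\beta}\to L_{\alpha\beta}$ is $\tau$ of a pair of pants whose two input circles carry $\alpha,\beta$ and whose output carries $\alpha\beta$ (the three boundary labels must multiply because the characteristic map extends over the pants). The unit $1_{L}\in L_{1}=A(\mathbf{S}^{1},1)$ is $\tau$ of the trivially labeled disk, and the form $\eta\colon L_{\alpha}\otimes L_{\alpha^{-1}}\to R$ is $\tau$ of the cobordism sending two oppositely labeled circles to $\emptyset$. The automorphism $\varphi_{\beta}$ is $\tau$ of a cylinder $\mathbf{S}^{1}\times I$ whose longitudinal loop represents $\beta$; it sends $L_{\alpha}$ into $L_{\beta\alpha\beta^{-1}}$ because dragging the base loop around $\beta$ conjugates its class. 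Associativity, the Frobenius identity $\eta(ab,c)=\eta(a,bc)$, and the homomorphism property $\varphi_{\beta\gamma}=\varphi_{\beta}\varphi_{\gamma}$ then follow from the gluing axiom $(6)$ together with the homeomorphism-invariance $(4)$ and homotopy-invariance $(8)$ of $\tau$, since the relevant cobordisms become $X$-homeomorphic (or homotopic rel boundary) after the obvious isotopies.

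Next I would check the remaining crossed-algebra axioms, each as a cobordism identity read off from the surface. Axiom $(2)$, $\varphi_{\alpha}|_{L_{\alpha}}=\id_{L_{\alpha}}$, holds because the twisted cylinder with longitude $\alpha$ acting on an input already labeled $\alpha$ has characteristic map homotopic rel boundary to the trivial one, so $(7)$ and $(8)$ force $\tau$ to be the identity. Crossed commutativity $\varphi_{\beta}(a)b=ba$ comes from comparing the two pairs of pants that multiply $a\in L_{\alpha}$ and $b\in L_{\beta}$ in the two cyclic orders: sliding one input past the other is a homeomorphism of the pants that drags the $\alpha$-loop around the $\beta$-loop, realising $\varphi_{\beta}$ on characteristic maps. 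The trace condition $(4)$ is the two ways of evaluating the once-punctured torus with longitude and meridian labeled $\alpha,\beta$: cutting along one curve yields $\operatorname{Tr}(c\varphi_{\beta})$ and cutting along the other yields $\operatorname{Tr}(\varphi_{\alpha^{-1}}c)$, while the two surfaces are $X$-homeomorphic. Non-degeneracy of $\eta$ is the snake identity, the capping cobordism and its reverse composing to the identity cylinder by axiom $(6)$.

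Finally, for the bijection I would argue by reduction to generators. A Morse function on any $X$-cobordism $W$ decomposes it into a composite of the elementary pieces above, so axiom $(6)$ forces $\tau(W)$ to be the corresponding composite of structure maps; homotopy invariance $(8)$ with $X=K(\pi,1)$ means the characteristic map enters only through the $\pi$-labels of the $1$-skeleton. This simultaneously shows that the algebra determines the HQFT up to isomorphism (injectivity) and furnishes the inverse construction (surjectivity): given a crossed $\pi$-algebra, define $\tau(W)$ by any such decomposition. The main obstacle is well-definedness of this inverse, i.e.\ independence of the Morse decomposition. By Cerf theory the decompositions of a fixed $W$ are related by a finite list of moves (handle slides, creation/cancellation of critical points, and interchanges of their heights), and I must verify that each move, refined by its $\pi$-labeling, is respected. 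The genuinely delicate moves are exactly those that permute two labeled circles or slide a handle over a labeled loop; checking these is where crossed commutativity (axiom $(3)$) and the trace identity (axiom $(4)$) are consumed, and is the technical heart of the argument.
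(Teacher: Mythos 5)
The paper does not prove this statement at all: it is imported verbatim as Theorem~$4.1$ of \cite{turaev:1999}, so your sketch can only be measured against Turaev's original argument --- which it reproduces in outline faithfully. Reading off $L_{\alpha}=A(\mathbf{S}^{1},\alpha)$, defining $m$, $1_{L}$, $\eta$ and $\varphi_{\beta}$ from the pair of pants, the disc, the annulus and the cylinder with basepoint arc tracing $\beta$, verifying the axioms by evaluating surfaces cut in two ways (the once-punctured torus computed along its two curves is exactly how the trace axiom $(4)$ of Definition~$\ref{cross}$ arises), and reconstructing the HQFT from the algebra by decomposing cobordisms into elementary pieces with independence of the decomposition as the technical heart --- all of this is the standard route and the one Turaev takes.

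Two caveats. First, your justification of $\varphi_{\alpha}|_{L_{\alpha}}=\id$ fails as stated: the cylinder whose basepoint arc traces $\alpha$, with both ends labeled $\alpha$, is \emph{not} homotopic rel boundary to the product cylinder when $\alpha\neq 1$. A homotopy rel $\partial W$ fixes the endpoints of the arc $\{pt\}\times I$, hence preserves its homotopy class rel endpoints, and that class is $\alpha$ for one characteristic map and $1$ for the other. What is true is that the two $X$-cylinders are related by a Dehn twist, a self-homeomorphism of $\mathbf{S}^{1}\times I$ that is the identity on the boundary and pulls the constant arc to one winding once; naturality (axiom $(4)$ of Definition~$\ref{HQFT}$) together with axiom $(7)$ then forces $\tau=\id$. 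So the axioms you need here are $(4)$ and $(7)$, not $(7)$ and $(8)$. Second, you correctly locate the substance of the bijection in the well-definedness of $\tau(W)$ under changes of Morse decomposition, but you only name the Cerf-theoretic moves without verifying any of them; in Turaev's paper this verification --- invariance under an explicit finite list of moves on decompositions, where crossed commutativity and the trace identity are indeed consumed exactly as you predict --- constitutes the bulk of the proof. As a proposal your text is sound once the rel-boundary slip is repaired; as a proof it is incomplete at precisely the point you yourself flag.
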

%
%
Now we define extended crossed group-algebras. 
\begin{defn}\label{extcross}
Let $\pi$ be a group such that $\alpha ^{2}=1$ for any $\alpha \in\pi$ (such a group is an $\mathbf{F_{2}}$-vector space). A tuple $(L, \eta, \varphi, \{\theta_{\alpha}\}_{\alpha\in\pi}, \Phi)$ is an {\it extended crossed $\pi$-algebra} over $R$ if the triple $(L, \eta, \varphi)$ is a crossed $\pi$-algebra over $R$, and a family of elements $\{\theta_{\alpha}\in L_{1}\}_{\alpha\in\pi}$ and an $R$-linear map of $R$-modules $\Phi\colon L\rightarrow L$ satisfy the following axioms:
\par
$(1)$
$\Phi^{2}=\id$, 
\par
$(2)$
$\Phi(L_{\alpha})\subset L_{\alpha}$ for any $\alpha\in\pi$, 
\par
$(3)$
for any $v, w\in L$, we have $\Phi(vw)=\Phi(w)\Phi(v)$, 
\par
$(4)$
$\Phi(1_{L})=1_{L}$, 
\par
$(5)$
$\eta\circ(\Phi\otimes \Phi)=\eta$, 
\par
$(6)$
for any $\alpha \in\pi$, we have $\Phi\circ\varphi_{\alpha}=\varphi_{\alpha}\circ\Phi$, 
\par
$(7)$
for any $\alpha, \beta, \gamma\in\pi$ and $v\in L_{\alpha \beta }$, we have 
\begin{align*}
m\circ(\Phi\otimes \varphi_{\gamma })\circ \Delta _{\alpha, \beta }(v)&=\varphi_{\gamma }(\theta_{\alpha \gamma }\theta_{\gamma }v),\\ 
m\circ(\varphi_{\gamma }\otimes \Phi)\circ \Delta _{\alpha, \beta }(v)&=\varphi_{\gamma }(\theta_{\beta \gamma }\theta_{\gamma }v), 
\end{align*}
where $\Delta _{\alpha, \beta }\colon L_{\alpha \beta }\rightarrow L_{\alpha}\otimes L_{\beta}$ is defined by the following relation: 
\begin{align}
(\id\otimes\eta)\circ(\Delta_{\alpha , \beta }\otimes\id)=m \label{eq2-1}
\end{align}
(since $\eta$ is non-degenerate and each $L_{\alpha }$ is finitely generated, such a map $\Delta_{\alpha ,\beta } $ is uniquely determined). 
\par
$(8)$
for any $\alpha, \beta \in\pi$ and $v\in L_{\alpha }$, we have $\Phi(\theta_{\beta}v_{\alpha})=\varphi_{\beta\alpha}(\theta_{\beta\alpha}v_{\alpha})$, 
\par
$(9)$
for any $\alpha\in\pi$, we have $\Phi(\theta_{\alpha})=\theta_{\alpha}$, 
\par
$(10)$
for any $\alpha, \beta\in\pi$, we have $\varphi_{\beta}(\theta_{\alpha})=\theta_{\alpha}$, 
\par
$(11)$ for any $\alpha, \beta, \gamma\in\pi$, we have $\theta_{\alpha }\theta_{\beta }\theta_{\gamma }=q(1)\theta_{\alpha \beta \gamma }$, where $q\colon R\rightarrow L_{1}$ is defined as follows: 
Let $\{a_{i}\in L_{\alpha \beta }\}_{i=1}^{n}$ and $\{b_{i}\in L_{\alpha \beta }\}_{i=1}^{n}$ be families of elements of $L_{\alpha \beta }$ satisfying 
\begin{align}
\sum_{i}\eta(b_{i}\otimes v)a_{i}=\varphi_{\beta \gamma }(v) \label{eq2-2}
\end{align}
for any $v\in L_{\alpha \beta }$ (from the same reason as ($7$), such $a_{i}$ and $b_{i}$ are unique). Then, we put $q(1):=\sum_{i}a_{i}b_{i}$.  
\end{defn}
The author \cite{tagami1} proved the following result. 
\begin{thm}[Theorem~$3.11$ in \cite{tagami1}]\label{thm1}
Let $\pi$ be an $\mathbf{F_{2}}$-vector space and $X$ the $K(\pi, 1)$ space. Then every unoriented $(1+1)$-dimensional HQFT with target $X$ over $R$ determines an underlying extended crossed $\pi$-algebra over $R$. This induces a bijection between the set of isomorphism classes of unoriented $(1+1)$-dimensional HQFTs over $R$ and the set of isomorphism classes of extended crossed $\pi$-algebras over $R$. 
\end{thm}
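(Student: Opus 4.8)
The plan is to prove the bijection by the standard two-way construction, upgrading Turaev's oriented classification (Theorem~\ref{turaev}) with the extra data that comes from non-orientable cobordisms and orientation-reversing self-homeomorphisms. First I would extract the algebraic data from a given unoriented HQFT $(A,\tau)$. Using Remark~\ref{color}, set $L_{\alpha}:=A(\mathbf{S}^{1},\alpha)$ and $L:=\bigoplus_{\alpha\in\pi}L_{\alpha}$; axioms (2) and (3) of Definition~\ref{HQFT} identify $A$ of a disjoint union of labelled circles with the corresponding tensor product and $A(\emptyset)$ with $R$. The multiplication $m$, unit, form $\eta$ and comultiplication $\Delta_{\alpha,\beta}$ are read off from the pair of pants, the labelled disk, the pairing cobordism and its dual exactly as in the oriented case, and the conjugation automorphisms $\varphi_{\beta}$ arise from cylinders carrying a longitudinal loop labelled $\beta$; since $\pi$ is abelian, these $\varphi_{\beta}$ preserve the grading. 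The two genuinely new pieces of data are the operator $\Phi:=r_{\sharp}$ induced by the orientation-reversing reflection $r$ of the labelled circle (note that $r$ fixes the label because $-\alpha=\alpha$ in an $\mathbf{F_{2}}$-vector space, so $r$ is an $X$-homeomorphism of $(\mathbf{S}^{1},\alpha)$), and the elements $\theta_{\alpha}:=\tau(N_{\alpha})(1_{R})$, where $N_{\alpha}$ is the M\"obius band viewed as an $X$-cobordism $\emptyset\to\mathbf{S}^{1}$ whose core carries the label $\alpha$. Since the boundary wraps the core twice and $2\alpha=0$, we indeed obtain $\theta_{\alpha}\in L_{0}$.

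The next step is to verify all the axioms of Definition~\ref{extcross}. The crossed $\pi$-algebra axioms for $(L,\eta,\varphi)$ follow from Theorem~\ref{turaev} applied to the oriented subtheory. The axioms governing $\Phi$ and $\theta_{\alpha}$ are each a translation of a topological identity: $\Phi^{2}=\id$ from $r^{2}=\id$ (axiom (1) of Definition~\ref{HQFT}); $\Phi(L_{\alpha})\subset L_{\alpha}$ and $\Phi(1_{L})=1_{L}$ from the fact that $r$ preserves labels and the disk; the anti-multiplicativity $\Phi(vw)=\Phi(w)\Phi(v)$ from the observation that reflecting the pair of pants interchanges its two input legs; and $\eta\circ(\Phi\otimes\Phi)=\eta$, $\Phi\circ\varphi_{\alpha}=\varphi_{\alpha}\circ\Phi$ from reflecting the pairing cobordism and the longitudinal cylinders. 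The defining relations for $\theta_{\alpha}$ come from surgery pictures: the invariance axioms (9) and (10) express that $N_{\alpha}$ is unchanged by reflecting its boundary and by a $\beta$-labelled collar (the one-sidedness of the band lets the twist be absorbed), while axioms (7) and (8) record how a cross-cap interacts with (co)multiplication and with the crossing. The crucial relation is axiom (11), $\theta_{\alpha}\theta_{\beta}\theta_{\gamma}=q(1)\theta_{\alpha+\beta+\gamma}$, which is the HQFT incarnation of the classification-of-surfaces identity $\#^{3}\mathbf{RP}^{2}\cong T^{2}\#\mathbf{RP}^{2}$: three cross-caps equal one cross-cap together with a handle, and the handle endomorphism contributes precisely the element $q(1)$ defined in Definition~\ref{extcross}(11).

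For the converse, given an extended crossed $\pi$-algebra I would build an HQFT by setting $A$ on labelled circles and their disjoint unions through the $L_{\alpha}$ and tensor products, and defining $\tau$ on an arbitrary unoriented $X$-cobordism by cutting it along a Morse function into elementary pieces—disks, pairs of pants, longitudinal cylinders, reflections and cross-caps—and composing the corresponding algebra maps $m$, $\Delta_{\alpha,\beta}$, $\eta$, $\varphi_{\beta}$, $\Phi$ and $\theta_{\alpha}$. Homotopy invariance of the characteristic map (the hypothesis $X=K(\pi,1)$) guarantees that only the labels, not the specific maps, enter, and axioms (4)--(8) of Definition~\ref{HQFT} (naturality, monoidality, gluing and homotopy invariance) then hold by construction, once $\tau$ is shown to be independent of the chosen decomposition.

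The main obstacle, and the technical heart of the proof, is precisely this well-definedness: one must show that any two elementary decompositions of the same $X$-cobordism produce the same homomorphism. This amounts to a generators-and-relations presentation of the category of unoriented $(1+1)$-dimensional $X$-cobordisms. The oriented relations are supplied by Theorem~\ref{turaev}; the new input is the non-orientable part, where I would invoke the classification of surfaces to reduce every cobordism to a normal form (an oriented part plus a controlled number of cross-caps) and then check that sliding a cross-cap past the oriented generators, commuting two cross-caps, and applying the reflection each correspond to one of the axioms (7)--(11). The three-cross-cap relation is the linchpin: confirming that axiom (11) matches $\#^{3}\mathbf{RP}^{2}\cong T^{2}\#\mathbf{RP}^{2}$ exactly, and that no further relations are forced, is where the argument must be most delicate. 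Once well-definedness is established, the two constructions are visibly mutually inverse on isomorphism classes, which yields the asserted bijection.
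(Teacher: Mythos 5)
You should note first that this paper contains no proof of Theorem~\ref{thm1} at all: it is imported verbatim as Theorem~3.11 of \cite{tagami1}, so the only meaningful comparison is with that cited paper, whose argument is exactly the route you outline---set $L_{\alpha}:=A(\mathbf{S}^{1},\alpha)$, extract $\Phi$ from the reflection of the labelled circle and $\theta_{\alpha}$ from the M\"obius band with $\alpha$-labelled core, verify the axioms of Definition~\ref{extcross} as topological identities, and invert the construction by cutting an unoriented $X$-cobordism into elementary pieces and proving independence of the decomposition via the classification of compact surfaces, all extending Turaev's proof of Theorem~\ref{turaev}. Your sketch is sound and matches that approach, including the correct observations that $2\alpha=0$ forces $\theta_{\alpha}\in L_{0}$ and that axiom (11) is the algebraic shadow of Dyck's relation $\#^{3}\mathbf{RP}^{2}\cong T^{2}\#\mathbf{RP}^{2}$; the one caveat is that the completeness of your relation set (the well-definedness step you yourself flag as the technical heart) is asserted rather than carried out, and that verification constitutes the bulk of the actual proof in \cite{tagami1}.
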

\section{Complexes of $X$-cobordisms for link diagrams on surfaces}\label{geometric_cpx}
In this section, we construct a geometric chain complex of $X$-cobordisms for link diagrams on oriented surfaces (Proposition~$\ref{mainprop}$). 
To define the complex, we use notations in \cite{turner-turaev:2006} as a reference. 
\begin{defn}\label{stable}
Let $(F,D)$ and $(F',D')$ be link diagrams $D$ and $D'$ on oriented surfaces $F$ and $F'$, respectively. 
Then $(F, D)$ and $(F', D')$ are {\it strongly equivalent} if they are related by the following two operations: 
\begin{itemize} 
\item finite sequence of Reidemeister moves on the surfaces, 
\item orientation preserving homeomorphisms of the surfaces. 
\end{itemize}
Suppose that $F$ and $F'$ are closed. Then, $(F,D)$ and $(F',D')$ are {\it stably equivalent} if they are related by the above two operations and the following: 
\begin{itemize}
\item adding or removing handles which do not affect the link diagram.  
\end{itemize}
\end{defn}
For a link diagram on a surface, we will define a complex by using the construction of Bar-Natan's complex given in \cite{Bar-Natan-2}. 
\begin{defn}
Let $F$ be an oriented compact surface and $X$ the Eilenberg-MacLane space of type $K(H_{1}(F; \mathbf{F_{2}}), 1)$. 
Define $\mathcal{UX}Cob(F)$ to be the following category. 
The objects of $\mathcal{UX}Cob(F)$ are collections of disjoint (unoriented) closed $1$-dimensional $X$-manifolds ($\Gamma$, $\alpha$) in $F$. 
A morphism $(\Gamma, \alpha)\rightarrow (\Gamma', \alpha')$ is an (unoriented) $(1+1)$-dimensional $X$-cobordism $\Sigma$ embedded in $F\times [0,1]$ which satisfies 
$\partial\Sigma=\Sigma \cap (F\times \{0,1\})$, $\Sigma \cap (F\times \{0\})=(\Gamma, \alpha)$ and $\Sigma \cap (F\times \{1\})=(\Gamma', \alpha')$. 
Two morphisms are identified if they are related by an isotopy which preserves boundaries and characteristic maps. 
\end{defn}
%
%
\begin{defn}
Define $\mathcal{UX}Cob(F)_{/r}$ to be the category obtained from $\mathcal{UX}Cob(F)$ by dividing the set of the morphisms by the equivalent relation generated by the S, T and $4$-Tu relations (see Figure~$\ref{relation}$). 
\begin{figure}[!h]
\begin{center}
\includegraphics[scale=0.65]{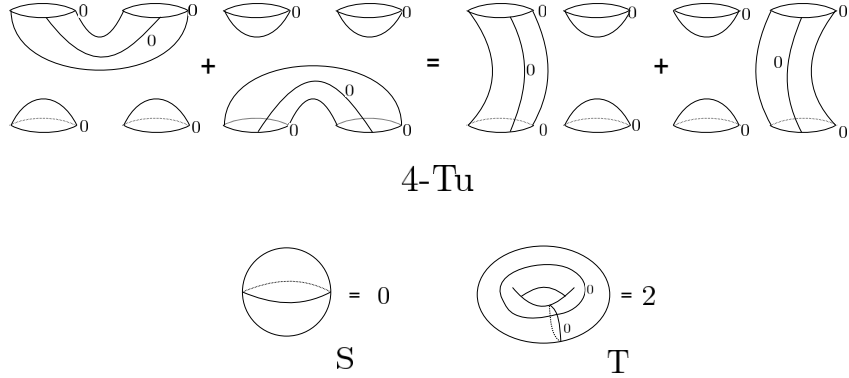}
\end{center}
\caption{Bar-Natan's relation. 
In these cobordisms, their characteristic maps send to the labeled arcs to the loops in $X$ corresponding to the labels. Such maps are uniquely determined up to homotopy since $X$ is the $K(H_{1}(F;\mathbf{F_{2}}), 1)$ space. }
\label{relation}
\end{figure}
\end{defn}
\begin{defn}
Let $\mathcal{C}$ be an additive category. 
Then we define a category $\mathcal{M}at(\mathcal{C})$ as follows: 
The objects of $\mathcal{M}at(\mathcal{C})$ are finite families $\{ \mathcal{C}_{i}\in\mathcal{C}\}$. 
For convenience, we denote $\{\mathcal{C}_{i}\in\mathcal{C}\}$ by $\bigoplus_{i}\mathcal{C}_{i}$. 
A morphism $\Sigma\colon\bigoplus_{i}\mathcal{C}_{i}\rightarrow \bigoplus_{i}\mathcal{C}_{j}'$ is a matrix $\Sigma=(\Sigma_{i}^{j})_{i,j}$ of morphisms $\Sigma_{i}^{j}\colon\mathcal{C}_{i}\rightarrow \mathcal{C}_{j}'$. If $\mathcal{C}$ is not additive category, then made it so by allowing formal $\mathbf{Z}$-linear combinations of morphisms and define $\mathcal{M}at(\mathcal{C})$ as above. 
\end{defn}
For an oriented link diagram $D$ on an oriented compact surface $F$, we will define a complex $([[(F,D)]]^{\ast}, \Sigma^{\ast})$ as follows: 
%
Let $(F,D)$ be an oriented link diagram $D$ on an oriented compact surface $F$. Fix an order of the crossings of $D$. 
For each crossings of $D$, we define $0$-smoothing and $1$-smoothing as in Figure~$\ref{smoothing}$. 
A smoothing of $D$ is a diagram where each crossing is changed by either $0$-smoothing or $1$-smoothing. 
Let $n$ be the number of the crossings of $D$. Then $D$ has $2^{n}$ smoothings.  
By using given ordering of $D$, we obtain a natural bijection between the set of the smoothings of $D$ and the set $\{0, 1\}^{n}$, that is, 
to any $\varepsilon=(\varepsilon_{1}, \dots, \varepsilon_{n})\in\{0,1\}^{n}$, we associate the smoothing $D_{\varepsilon}$ where the $i$-th crossing is $\varepsilon_{i}$-smoothed. 
Each smoothing $D_{\varepsilon}$ is a collection of disjoint circles. 
We can regard a smoothing $D_{\varepsilon}$ as an unoriented closed $1$-dimensional $X$-manifold by assigning the homology class $\alpha=[\Gamma]\in H_{1}(F; \mathbf{F_{2}})=\pi_{1}(X)$ to each circle $\Gamma$ of $D_{\varepsilon}$ (see Remark~$\ref{color}$). 
Then define 
\begin{align*}
[[(F,D)]]^{i}:=\bigoplus_{|\varepsilon|=i+n_{-}}D_{\varepsilon} \in Ob(\mathcal{M}at(\mathcal{UX}Cob(F)_{/r})), 
\end{align*}
where $|\varepsilon|=\sum_{i=1}^{n}\varepsilon_{i}$ and $n_{-}$ is the number of the negative crossings of $D$. 
\begin{figure}[!h]
\begin{center}
\includegraphics[scale=0.55]{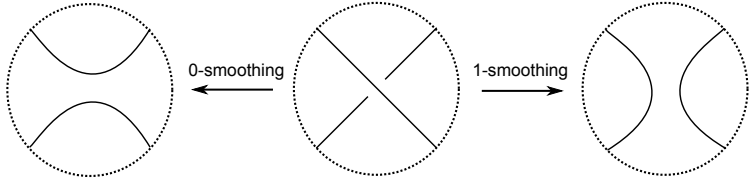}
\end{center}
\caption{$0$-smoothing and $1$-smoothing. }
\label{smoothing}
\end{figure}
\par
The morphism $\Sigma^{i}\colon[[(F,D)]]^{i}\rightarrow [[(F,D)]]^{i+1}$ is defined as follows: 
Take elements $\varepsilon, \varepsilon'\in\{0,1\}^{n}$ such that $\varepsilon_{j}=0$ and $\varepsilon_{j}'=1$ for some $j$ and that $\varepsilon_{i}=\varepsilon_{i}'$ for all $i\neq j$. 
For such a pair $(\varepsilon, \varepsilon')$, 
the smoothing $D_{\varepsilon}$ can be identical to $D_{\varepsilon'}$ out side a disc. 
We call this disc the changing disc (see p. 1085 in \cite{turner-turaev:2006}). 
Let $B$ be the changing disc. 
Define a cobordism $\Sigma_{\varepsilon\rightarrow \varepsilon'}\subset F\times [0,1]$ to be the $X$-surface obtained from $D_{\varepsilon}\times [0,1]$ by replacing $B\times [0,1]$ with the saddle depicted in Figure~$\ref{saddle}$ in such a way that the bottom base of $\Sigma_{\varepsilon\rightarrow \varepsilon'}$ is $D_{\varepsilon}$ and the top is $D_{\varepsilon'}$. 
\par
If there exist distinct integers $i$ and $j$ such that $\varepsilon_{i}\neq\varepsilon'_{i}$ and $\varepsilon_{j}\neq\varepsilon'_{j}$, then define $\Sigma_{\varepsilon\rightarrow \varepsilon'}=0$. 
\par
In this setting, we define a map $\Sigma^{i}\colon[[(F,D)]]^{i}\rightarrow [[(F,D)]]^{i+1}$ by the matrix $((-1)^{l(\varepsilon, \varepsilon')}\Sigma_{\varepsilon \rightarrow \varepsilon'})_{\varepsilon, \varepsilon'}$. 
Here $|\varepsilon|=i$, $|\varepsilon'|=i+1$ and $l(\varepsilon, \varepsilon')$ is the number of $1$'s in front of (in our order) the factor of $\varepsilon$ which is different from $\varepsilon'$. 
\begin{figure}[!h]
\begin{center}
\includegraphics[scale=0.29]{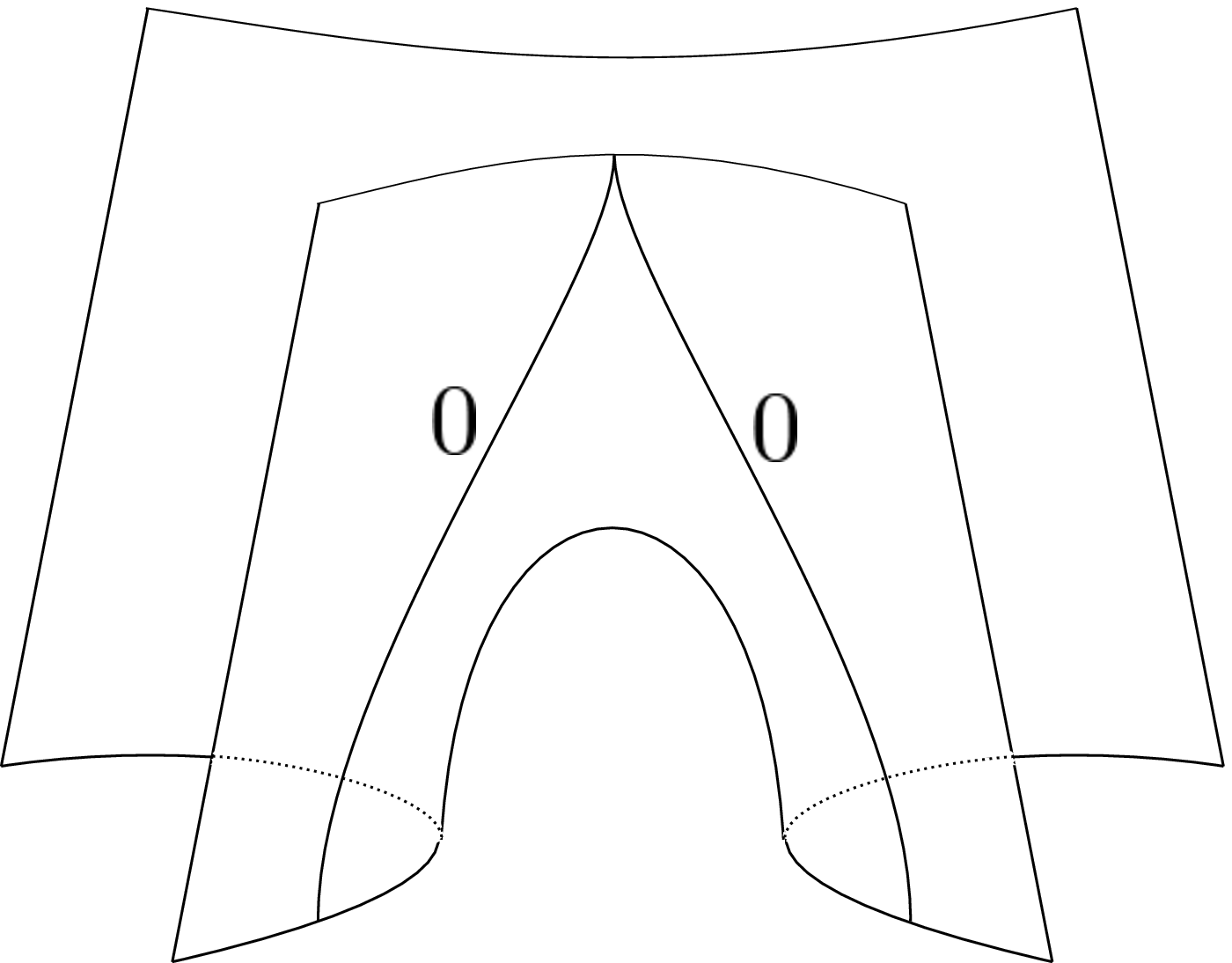}
\end{center}
\caption{Saddle. }
\label{saddle}
\end{figure}
\begin{lem}\label{lem:cpx}
For any oriented link diagram $D$ on an oriented compact surface $F$, the pair $([[(F,D)]]^{\ast}, \Sigma^{\ast})$ is a chain complex and its homotopy class does not depend on the choice of an order of the crossings of $D$. 
\end{lem}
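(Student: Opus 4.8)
The plan is to prove the two assertions separately: first that $\Sigma^{\ast}$ squares to zero, and then that a change of ordering induces an isomorphism of complexes (which in particular preserves the homotopy type). Neither step should require the S, T and $4$-Tu relations; they enter only later, in the proof of Reidemeister invariance.

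For the first assertion I would inspect the matrix entries of the composite $\Sigma^{i+1}\circ\Sigma^{i}$. By definition of $\Sigma_{\varepsilon\to\varepsilon'}$, the entry from $D_{\varepsilon}$ to $D_{\varepsilon''}$ is nonzero only when $\varepsilon$ and $\varepsilon''$ differ in exactly two coordinates, say $j<k$; for such a pair there are precisely two intermediate smoothings $\varepsilon'$ with $|\varepsilon'|=|\varepsilon|+1$, giving the two paths around the corresponding square of the cube $\{0,1\}^{n}$. The geometric heart of the matter is that the two composite saddle cobordisms coincide as $X$-cobordisms: the changing discs at the $j$-th and $k$-th crossings are disjoint, so the two saddles may be attached in either order and produce isotopic surfaces in $F\times[0,1]$; moreover, since $X=K(\pi,1)$, the characteristic map is determined up to homotopy by the labels on the boundary circles (Remarks~$\ref{rel_boud}$ and $\ref{color}$), and these boundary labels are the same for both composites, so the two cobordisms agree already in $\mathcal{UX}Cob(F)_{/r}$. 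It then remains to compare the signs. Writing $a$ for the number of $1$'s preceding position $j$ in $\varepsilon$ and $b$ for the number of $1$'s preceding position $k$ in $\varepsilon$, the path changing $j$ first contributes $(-1)^{a+(b+1)}$ while the path changing $k$ first contributes $(-1)^{a+b}$, the extra $+1$ arising from the $1$ newly placed at position $j<k$. These signs are opposite, so the two contributions cancel and the entry vanishes, giving $\Sigma^{i+1}\circ\Sigma^{i}=0$.

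For the second assertion I would first observe that the objects $[[(F,D)]]^{i}$ and the unsigned saddle morphisms $\Sigma_{\varepsilon\to\varepsilon'}$ are intrinsic to $(F,D)$: the homological grading records the number of $1$-smoothings (and $n_{-}$), both independent of the order, while each saddle is attached at a single crossing regardless of how the crossings are numbered. Hence the only data that depend on the ordering are the signs $(-1)^{l(\varepsilon,\varepsilon')}$. The computation above shows that every ordering yields a sign assignment on the edges of the cube that is anticommutative on each $2$-face. Any two such sign assignments differ by a vertex rescaling, i.e. there is a function $\mu\colon\{0,1\}^{n}\to\{\pm1\}$ with $(-1)^{l'(\varepsilon,\varepsilon')}=\mu(\varepsilon)\mu(\varepsilon')(-1)^{l(\varepsilon,\varepsilon')}$ for every edge $\varepsilon\to\varepsilon'$; multiplication by $\mu(\varepsilon)$ on each summand $D_{\varepsilon}$ then intertwines the two differentials and defines an isomorphism of complexes between the two orderings. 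In particular the homotopy class is unchanged.

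I expect the main obstacle to lie in the first assertion, and within it the verification that the two saddle composites agree as $X$-\emph{cobordisms} rather than merely as unlabeled cobordisms: one must check that the characteristic maps match, which is exactly where the hypothesis $X=K(\pi,1)$ is essential, since it guarantees that the labelled boundary circles determine the map up to homotopy. The sign bookkeeping, though fiddly, is routine once the square is set up, and the reduction in the second part to the standard fact that anticommutative sign assignments on a cube form a single rescaling-orbit is purely combinatorial.
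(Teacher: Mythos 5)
Your proposal is correct and takes essentially the same route as the paper: the paper's proof of this lemma is a one-line appeal to Proposition~3.1 of Turner--Turaev \cite{turner-turaev:2006}, whose content is precisely your cube argument (the two saddle composites around each square of $\{0,1\}^{n}$ agree since the changing discs are disjoint, the signs $(-1)^{l(\varepsilon,\varepsilon')}$ are opposite on the two paths, and order independence follows from a vertex rescaling $\mu$, the standard fact that face-anticommuting sign assignments on a simply connected cube form one rescaling orbit). Your additional check that the composites agree as $X$-cobordisms --- the characteristic maps being determined up to homotopy by the boundary labels because $X=K(\pi,1)$ --- is exactly the detail the paper's citation leaves implicit, and you handle it correctly.
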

\begin{proof}
This follows from the same discussion in Proposition~$3.1$ in \cite{turner-turaev:2006}. 
\end{proof}
From the following proposition, the complex $([[(F,D)]]^{\ast}, \Sigma^{\ast})$ is an invariant of strong equivalence. 
\begin{prop}\label{mainprop}
Let $D$ be an oriented link diagram on an oriented compact surface $F$. The homotopy class of the complex $([[(F,D)]]^{\ast}, \Sigma^{\ast})$ in $\mathcal{M}at(\mathcal{UX}Cob(F)_{/r})$ is an invariant under Reidemeister moves. 
\end{prop}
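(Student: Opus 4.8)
The plan is to follow Bar-Natan's strategy \cite{Bar-Natan-2}, adapted to the surface setting exactly as in \cite{turner-turaev:2006}, by checking that the homotopy class of $([[(F,D)]]^{\ast}, \Sigma^{\ast})$ is unchanged under each of the three Reidemeister moves, one at a time. The essential algebraic tool will be the \emph{Gaussian elimination} (simplification) lemma for complexes over an additive category: if a single matrix entry of a differential is an isomorphism in $\mathcal{UX}Cob(F)_{/r}$, then the two objects it connects may be cancelled to produce a chain-homotopy-equivalent complex in $\mathcal{M}at(\mathcal{UX}Cob(F)_{/r})$. The key simplifying observation for the $X$-structure is that each Reidemeister move is supported inside a disc $B\subset F$, so every circle or arc created inside $B$ bounds a disc and therefore carries the trivial homology class $0\in\pi=H_{1}(F;\mathbf{F_{2}})$; only the strands passing through $B$ carry labels, and these are inherited unchanged from the ambient diagram. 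Thus each verification reduces to a local computation of cobordisms in which the $\pi$-labels are simply carried along, and the only new labels that appear are trivial.

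First I would dispose of R1. The local complex for a curl is a two-term complex, in which one of the two smoothings differs from the other by a small contractible circle labeled $0$. Using the S, T and 4-Tu relations of Figure~$\ref{relation}$ to produce a neck-cutting/delooping isomorphism, this circle can be split off, after which the relevant saddle component becomes invertible and may be removed by Gaussian elimination; what remains is isomorphic to the complex of the single strand. For R2, the local complex is a square whose objects differ by bigon circles (again labeled $0$ inside $B$); one of the saddle cobordisms restricts to an invertible tube connecting the two bigon resolutions, so Gaussian elimination cancels the resulting acyclic subcomplex and leaves a complex isomorphic to the one for the two parallel strands. In both cases the strands through $B$ may carry nontrivial labels, but since they pass through unchanged the argument is insensitive to them.

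Finally, R3, which I expect to be the main obstacle. The strategy will be either to realize R3 as a consequence of R2 together with diagram chasing, or to exhibit an explicit chain homotopy equivalence between the two local complexes; in either formulation the point is that the naive layer-by-layer identification of smoothings fails to commute on the nose, and the discrepancy is corrected precisely by the 4-Tu relation. The genuinely new feature over the planar Khovanov case is that here the three strands entering the triangle region may carry nontrivial homology labels $\alpha,\beta,\gamma\in\pi$, so I must check that the 4-Tu relation as stated in Figure~$\ref{relation}$ is formulated with labeled arcs in enough generality that the neck-cutting and sliding moves used in the homotopy remain valid for arbitrary labels. Granting that the labeled S, T and 4-Tu relations hold for all labels, the remaining bookkeeping (signs $(-1)^{l(\varepsilon,\varepsilon')}$, independence from the chosen crossing order via Lemma~$\ref{lem:cpx}$, and the identification of the surviving summands) is routine and parallels \cite{turner-turaev:2006}. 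Putting the three moves together, the homotopy class of $([[(F,D)]]^{\ast}, \Sigma^{\ast})$ is a Reidemeister invariant, which is the claim.
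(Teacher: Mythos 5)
Your overall strategy---localize each Reidemeister move in a disc $B\subset F$, observe that every circle born inside $B$ bounds a disc and hence carries the trivial label $0\in H_{1}(F;\mathbf{F_{2}})$ while through-going strands keep their ambient labels, and then run Bar-Natan's local verifications using the labeled S, T and 4-Tu relations of Figure~\ref{relation}---is exactly the paper's: its printed proof consists of the single remark that Proposition~3.3 of \cite{turner-turaev:2006} (itself Bar-Natan's argument \cite{Bar-Natan-2}) applies verbatim in this setting. Your identification of the labeling of circles and arcs as the only genuinely new feature, and of the need to have the relations available for arbitrary labels, is the right observation and is what makes the import of \cite{turner-turaev:2006} legitimate.

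However, the specific mechanism you propose for R1 and R2 fails in this category. Delooping---an isomorphism between a ($0$-labeled) circle and $\emptyset\oplus\emptyset$ in $\mathcal{M}at(\mathcal{UX}Cob(F)_{/r})$---is not available here: morphisms are $\mathbf{Z}$-linear combinations of \emph{undotted} cobordisms modulo S, T and 4-Tu, and in this undotted quotient the circle is not isomorphic to a sum of empty objects. Concretely, any candidate $\emptyset\oplus\emptyset\to(\text{circle})$ is a combination of births with handles attached; applying Khovanov's Frobenius algebra $\mathbf{Z}[x]/(x^{2})$, which satisfies S, T and 4-Tu, sends the plain birth to $1$, the genus-one birth to $2x$, and all higher-genus births to $0$, so the image lies in $\mathbf{Z}\cdot 1+2\mathbf{Z}\cdot x$ and no such map is ever invertible; over $\mathbf{F_{2}}$---the coefficients of all the paper's examples, e.g.\ the algebra of Proposition~\ref{prophqft}, where the handle element is $m\circ\Delta_{0,0}(1)=2x=0$---the relevant pairing degenerates completely. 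Since, moreover, no single matrix entry of the R1 or R2 differential is itself an isomorphism (the entries are saddles, which one checks are non-invertible by applying such a functor to any closure), Gaussian elimination never gets started, so your R1 and R2 arguments collapse as stated, and your R3 argument, which leans on R2, inherits the problem. The repair is to abandon delooping and use Bar-Natan's original explicit chain homotopy equivalences for R1 and R2---built from births, deaths, tubes and curtains supported in $B\times I$, with the identities and homotopies verified via the S and 4-Tu relations---together with his cone (categorified Kauffman trick) reduction of R3 to R2 invariance; this is precisely the ``same proof'' the paper imports from \cite{turner-turaev:2006}, and your observation that every new closed component carries the label $0$ is then exactly what is needed to carry those cobordisms and relations over to the $X$-labeled setting.
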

\begin{proof}
In Proposition~$3.3$ in \cite{turner-turaev:2006}, Turner and Turaev gave an analogous statement for link diagrams on surfaces by using Bar-Natan's method. 
We can apply the same proof to our setting. 
\end{proof}
\section{link homologies over $\mathbf{F_{2}}$}\label{link_homology}
In this section, we construct our link homology and show that it has some properties similar to the Khovanov homology. \par
In Section~$\ref{link_homology2}$, we apply an HQFT which preserves the S, T and 4-Tu relations to the geometric chain complex given in Section~$\ref{geometric_cpx}$ in order to construct our link homology, 
and we prove Theorem~$\ref{homology_hqft}$. 
In Section~$\ref{section_dual}$, we give a duality relation of our link homology (Theorem~$\ref{dual}$). 
In Sections~$\ref{link_homology1}$ and $\ref{section_nonvanish}$, we construct an HQFT over $\mathbf{F_{2}}$ with target $K(\pi, 1)$ for an $\mathbf{F_{2}}$-vector space $\pi$ and give a non-vanishing condition of the link homology (Theorem~$\ref{non_vanish}$). 
%
%
%
%
\subsection{Applying an unoriented HQFT to geometric chain complexes}\label{link_homology2}
In this subsection, we construct our link homology. 
\par
Let F be an oriented compact surface. 
Define $\pi:=H_{1}(F;\mathbf{F_{2}})$. 
Let $(A, \tau)$ be an HQFT with target $K(\pi,1)$ which preserves the S, T and 4-Tu relations given in Figure~$\ref{relation}$. 
We can regard $(A,\tau)$ as a functor 
\begin{center}
$\mathcal{UX}Cob(F)\rightarrow Proj_{R}$ 
\end{center}
by $(\Gamma, \alpha)\mapsto A(\Gamma, \alpha)$ and $W\mapsto \tau(W)$, where $Proj_{R}$ is the category of projective $R$-modules. 
By taking formal direct sums in $\mathcal{M}at(\mathcal{UX}Cob(F))$ to direct sums in $Proj_{R}$, the HQFT $(A,\tau)$ extends to a functor 
\begin{center}
$\mathcal{M}at(\mathcal{UX}Cob(F))\rightarrow Proj_{R}$. 
\end{center}
Moreover, the HQFT $(A,\tau)$ defines a functor
\begin{center}
$\mathcal{M}at(\mathcal{UX}Cob(F)_{/r})\rightarrow Proj_{R}$. 
\end{center}
For an oriented link diagram $(F,D)$, we define 
\begin{align*}
C_{(A,\tau)}^{i}(F,D)&:=A([[(F,D)]]^{i})\\
d_{(A,\tau)}^{i}(F,D)=d_{(A,\tau)}^{i}&:=\tau(\Sigma^{i})\colon C_{(A,\tau)}^{i}(F,D)\rightarrow C_{(A,\tau)}^{i+1}(F,D). 
\end{align*} 
From Lemma~$\ref{lem:cpx}$, $(C_{(A,\tau)}^{i}(F,D), d_{(A,\tau)}^{i})$ is a chain complex. 
Denote its homology by $H^{\ast}_{(A,\tau)}(F,D)$. 
By the following lemma, the homology $H^{\ast}_{(A,\tau)}(F,D)$ gives an invariant of strong equivalence. 
\begin{lem}\label{mainlem}
The homotopy class of the complex $(C_{(A,\tau)}^{i}(F,D), d_{(A,\tau)}^{i})$ is an invariant of strong equivalence classes. 
In particular, its homology $H^{\ast}_{(A,\tau)}(F,D)$ is an invariant of strong equivalence classes. 
\end{lem}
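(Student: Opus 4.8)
The plan is to reduce the statement to the geometric level established in Section~\ref{geometric_cpx} and then invoke functoriality. By Definition~\ref{stable}, strong equivalence is generated by two operations, namely finite sequences of Reidemeister moves and orientation preserving homeomorphisms of the surface, so it suffices to produce, for each of these two operations, a chain homotopy equivalence (indeed an isomorphism in the homeomorphism case) between the associated complexes $C_{(A,\tau)}^{\ast}$. The central observation is that $(A,\tau)$ has been arranged to define an additive functor $\mathcal{M}at(\mathcal{UX}Cob(F)_{/r})\rightarrow Proj_{R}$, and an additive functor carries chain maps to chain maps, chain homotopies to chain homotopies, and hence chain homotopy equivalences to chain homotopy equivalences.

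First I would treat Reidemeister moves. Proposition~\ref{mainprop} asserts that the homotopy class of the geometric complex $([[(F,D)]]^{\ast},\Sigma^{\ast})$ in $\mathcal{M}at(\mathcal{UX}Cob(F)_{/r})$ is unchanged under a Reidemeister move; concretely this means that a move relating $D$ and $D'$ yields a chain homotopy equivalence $([[(F,D)]]^{\ast},\Sigma^{\ast})\simeq([[(F,D')]]^{\ast},\Sigma^{\ast})$ of geometric complexes. Applying the functor $(A,\tau)$ and using that it preserves chain homotopy equivalences, I obtain a chain homotopy equivalence $C_{(A,\tau)}^{\ast}(F,D)\simeq C_{(A,\tau)}^{\ast}(F,D')$ of complexes of projective $R$-modules.

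Next I would treat an orientation preserving homeomorphism $h\colon F\rightarrow F'$. Such an $h$ carries $D$ to the diagram $D'=h(D)$ on $F'$, sends each smoothing $D_{\varepsilon}$ to $D'_{\varepsilon}$, and induces an isomorphism $h_{\ast}\colon H_{1}(F;\mathbf{F_{2}})\rightarrow H_{1}(F';\mathbf{F_{2}})$ on first homology, under which the homology class labeling each circle is preserved. Consequently $h$ induces an isomorphism of categories $\mathcal{UX}Cob(F)_{/r}\cong\mathcal{UX}Cob(F')_{/r}$ compatible with the S, T and 4-Tu relations, carrying the geometric complex of $(F,D)$ to that of $(F',D')$ isomorphically. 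Transporting the HQFT $(A,\tau)$ along the identification $\pi=H_{1}(F;\mathbf{F_{2}})\cong H_{1}(F';\mathbf{F_{2}})$ furnished by $h_{\ast}$, this isomorphism of geometric complexes is sent by the functor to an isomorphism $C_{(A,\tau)}^{\ast}(F,D)\cong C_{(A,\tau)}^{\ast}(F',D')$ of $R$-module complexes.

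Combining the two cases, the homotopy class of $(C_{(A,\tau)}^{i}(F,D),d_{(A,\tau)}^{i})$ is constant along each generating operation of strong equivalence, hence is a strong equivalence invariant, and passing to homology gives the invariance of $H_{(A,\tau)}^{\ast}(F,D)$. I expect the main obstacle to lie in the homeomorphism case rather than the Reidemeister case: one must check carefully that $h$ descends to a well-defined isomorphism on the quotient categories $\mathcal{UX}Cob(F)_{/r}$, that is, that it respects the S, T and 4-Tu relations, and that the HQFT is genuinely compatible with the target identification induced by $h_{\ast}$, so that applying the functor yields the claimed isomorphism of complexes and not merely an abstract isomorphism of homologies.
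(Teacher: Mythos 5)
Your proof follows essentially the same route as the paper's: invariance under Reidemeister moves is deduced from Proposition~\ref{mainprop} together with the fact that the functor $(A,\tau)$ preserves chain homotopy equivalences, and invariance under an orientation preserving homeomorphism $f$ comes from the $X$-homeomorphisms it induces on the smoothings. The compatibility check you single out as the main obstacle is exactly what the paper settles in one line: apply axiom $(4)$ of Definition~\ref{HQFT} to $\tilde{f}=f\times \id\colon F\times I\rightarrow F'\times I$, which carries each saddle cobordism $\Sigma_{\varepsilon\rightarrow \varepsilon'}$ to $\Sigma'_{\varepsilon\rightarrow \varepsilon'}$ and yields $f_{\sharp}\circ\tau(\Sigma_{\varepsilon\rightarrow \varepsilon'})=\tau(\Sigma'_{\varepsilon\rightarrow \varepsilon'})\circ f_{\sharp}$, so that $f_{\sharp}$ is an isomorphism of complexes.
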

\begin{proof}
It follows from Proposition~$\ref{mainprop}$ that the homotopy class of the complex $(C_{(A,\tau)}^{i}(F,D), d_{(A,\tau)}^{i})$ is preserved under Reidemeister moves. 
\par
Let $f\colon (F,D)\rightarrow (F',D')$ be an orientation preserving homeomorphism. 
This induces $X$-homeomorphisms on the smoothings. 
Applying the HQFT, we obtain the isomorphism $f_{\sharp}\colon C_{(A,\tau)}^{\ast}(F,D)\rightarrow C_{(A,\tau)}^{\ast}(F',D')$.  
In order to check that $f_{\sharp}$ is a chain map, consider the homeomorphism $\tilde{f}\colon F\times I\rightarrow F'\times I$ given by $(s,t)\mapsto (f(s),t)$. 
The map $\tilde{f}$ induces $X$-homeomorphisms $\Sigma_{\varepsilon\rightarrow \varepsilon'}\rightarrow \Sigma'_{\varepsilon\rightarrow \varepsilon'}$ on the $X$-cobordisms defining the differential maps of the complexes. 
By axiom $(4)$ in Definition~$\ref{HQFT}$, we have $f_{\sharp}\circ\tau(\Sigma_{\varepsilon\rightarrow \varepsilon'})=\tau(\Sigma'_{\varepsilon\rightarrow \varepsilon'})\circ f_{\sharp}$. 
This means $f_{\sharp}$ is a chain map. 
Hence $f_{\sharp}\colon (C_{(A,\tau)}^{i}(F,D), d_{(A,\tau)}^{i})\rightarrow (C_{(A,\tau)}^{i}(F',D'), d_{(A,\tau)}^{i})$ is an isomorphism of the complexes. 
\end{proof}
\begin{proof}[{\bf Proof of Theorem~$\ref{homology_hqft}$}]
Theorem~$\ref{homology_hqft}$ immediately follows from Lemma~$\ref{mainlem}$ and Remark~$\ref{main_rem}$. 
\end{proof}
\subsection{A duality of our link homology}\label{section_dual}
In this subsection, we prove that our homology has a duality relating the homology groups of a link diagram to those of its mirror image (Theorem~$\ref{dual}$). \par
Let $\pi$ be an $\mathbf{F_{2}}$-vector space with finite dimension and $(L, \eta, \varphi, \{\theta_{\alpha}\}_{\alpha\in\pi}, \Phi)$ an extended crossed $\pi$-algebra. 
Define $\tilde{L}:=\Hom(L; \mathbf{F_{2}})$. 
We give $\tilde{L}$ an  extended crossed $\pi$-algebra structure as follows: 
\begin{itemize}
\item the unit $\tilde{1}$ is $\eta(1\otimes \cdot)\in\tilde{L}$, 
\item the multiplication $\tilde{m}\colon\tilde{L}\otimes \tilde{L}\rightarrow \tilde{L}$ is given by $\tilde{m}(u^{\ast}\otimes v^{\ast})(s):=(v^{\ast}\otimes u^{\ast})(\Delta (s))$, where $u^{\ast},v^{\ast}\in\tilde{L}$, $s\in L$ and $\Delta$ is the comultiplication of $L$, 
\item the inner product $\tilde{\eta} \colon \tilde{L}\otimes \tilde{L}\rightarrow \mathbf{F_{2}}$ is given by $\tilde{\eta}(u^{\ast},v^{\ast}):=(u^{\ast}\otimes v^{\ast})(\Delta (1))$, where $u^{\ast},v^{\ast}\in\tilde{L}$ and $1\in L_{0}$ is the unit of $L$, 
\item the element $\tilde{\theta}_{\alpha}$ is $\eta(\theta_{\alpha}\otimes \cdot)\in\tilde{L}$, 
\item the map $\tilde{\varphi}:=\varphi^{\ast}$ for any $\alpha\in\pi$, where $\varphi^{\ast}$ is the dual of $\varphi$, 
\item the map $\tilde{\Phi}:=\Phi^{\ast}$, where $\Phi^{\ast}$ is the dual of $\Phi$.  
\end{itemize}
Indeed, the algebra $\tilde{L}$ is extended crossed $\pi$-algebra. For example, the map $\tilde{m}$ is associative (see Figure~$\ref{dualpicture}$), the element $\tilde{1}$ is the unit (see Figure~$\ref{dualpicture2}$) and so on. 
\begin{lem}\label{duallem}
The map $\Psi \colon L\rightarrow \tilde{L}$ given by $\Psi(v):=\eta(v\otimes \cdot)$ is an isomorphism of extended crossed $\pi$-algebras. 
\end{lem}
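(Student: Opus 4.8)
The plan is to show that the specific map $\Psi(v) := \eta(v \otimes \cdot)$ is a bijective $R$-linear map (indeed an $\mathbf{F_2}$-linear isomorphism) and that it intertwines every piece of the extended crossed $\pi$-algebra structure. Since the excerpt has already verified that $\tilde L$ carries a legitimate extended crossed $\pi$-algebra structure, what remains is purely to check compatibility of $\Psi$ with the unit, multiplication, inner product, the family $\{\theta_\alpha\}$, and the operators $\varphi_\alpha, \Phi$.

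Let me sketch the approach.

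\medskip

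First I would establish that $\Psi$ is an isomorphism of the underlying $\mathbf{F_2}$-modules, respecting the grading. Non-degeneracy of $\eta$ on each $L_\alpha \otimes L_{\alpha^{-1}}$ (Definition~\ref{frobenius}(1)) means exactly that the map $d \colon L_\alpha \to \Hom(L_{\alpha^{-1}}, R)$ with $d(v)(w) = \eta(v,w)$ is an isomorphism; since $\pi$ is an $\mathbf{F_2}$-vector space we have $\alpha^{-1} = \alpha$, so $\Psi$ restricted to $L_\alpha$ lands in $\Hom(L_\alpha, R) = \tilde L_\alpha$ and is an isomorphism there. Assembling over all $\alpha \in \pi$ (using that $\pi$ is finite-dimensional, so the direct sum is finite) gives that $\Psi$ is a graded $R$-isomorphism. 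Moreover $\Psi(\theta_\alpha) = \eta(\theta_\alpha \otimes \cdot) = \tilde\theta_\alpha$ and $\Psi(1) = \eta(1 \otimes \cdot) = \tilde 1$ hold by the very definitions of $\tilde\theta_\alpha$ and $\tilde 1$, so the unit and the $\theta$-family are automatic.

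\medskip

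Next I would verify multiplicativity, $\Psi(vw) = \tilde m(\Psi(v) \otimes \Psi(w))$. Evaluating the right-hand side on $s \in L$ gives $(\Psi(w) \otimes \Psi(v))(\Delta(s)) = (\eta(w \otimes \cdot) \otimes \eta(v \otimes \cdot))(\Delta(s))$, which by the defining relation~\eqref{eq2-1} for the comultiplication collapses to $\eta(vw, s)$; this is exactly $\Psi(vw)(s)$. Here I would lean on the Frobenius identity $\eta(ab,c) = \eta(a,bc)$ together with the characterization of $\Delta$, and keep careful track of the order-reversal in the definition of $\tilde m$ and the partial trace that relates $\Delta$ to $m$. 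A parallel and shorter computation handles $\tilde\eta(\Psi(v), \Psi(w)) = \eta(v,w)$: unwinding $\tilde\eta(u^\ast, v^\ast) = (u^\ast \otimes v^\ast)(\Delta(1))$ and using~\eqref{eq2-1} again reduces it to $\eta(v, w)$. Compatibility with $\varphi_\alpha$ and $\Phi$ follows because $\tilde\varphi = \varphi^\ast$ and $\tilde\Phi = \Phi^\ast$ are defined as the duals: the identities $\Psi \circ \varphi_\alpha = \tilde\varphi_\alpha \circ \Psi$ and $\Psi \circ \Phi = \tilde\Phi \circ \Psi$ reduce, after applying $\Psi$, to the statements $\eta(\varphi_\alpha(v), \cdot) = \eta(v, \varphi_\alpha(\cdot))$ and $\eta(\Phi(v), \cdot) = \eta(v, \Phi(\cdot))$, i.e.\ to the fact that $\varphi_\alpha$ and $\Phi$ preserve $\eta$ (Definition~\ref{cross}(1) and Definition~\ref{extcross}(5)).

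\medskip

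The main obstacle I anticipate is purely bookkeeping rather than conceptual: getting the order conventions right in $\tilde m$ (which reverses the tensor factors) and correctly reading off the comultiplication $\Delta$ as the adjoint of $m$ through the non-degenerate form. The cleanest route is to phrase everything diagrammatically, as the surrounding text already suggests with its references to Figures~\ref{dualpicture} and \ref{dualpicture2}: realize $\Psi$ as the ``cap'' cobordism and check each structure map as a planar identity of $X$-cobordisms, where the partial-trace definition of $\Delta$ becomes a zig-zag and the Frobenius relation becomes a genuine isotopy. This sidesteps index-chasing with dual bases and makes the verifications essentially visual.
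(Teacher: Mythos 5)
Your proposal is correct and follows essentially the same route as the paper: bijectivity from non-degeneracy of $\eta$, the identities $\Psi(1)=\tilde 1$ and $\Psi(\theta_\alpha)=\tilde\theta_\alpha$ read off from the definitions, compatibility with $\varphi_\alpha$ and $\Phi$ from the fact that they preserve $\eta$ (Definitions~\ref{cross}(1) and \ref{extcross}(5)), and the multiplicativity and $\tilde\eta$-compatibility checked via the adjoint characterization of $\Delta$ in~\eqref{eq2-1} --- which is precisely what the paper's Figures~\ref{dualpicture3} and \ref{dualpicture4} encode diagrammatically, as you anticipate in your closing paragraph.
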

\begin{proof}
The map $\Psi $ is bijective since $\eta$ is non-degenerate (see Definition~$\ref{frobenius}$). 
From the definition of $\tilde{L}$, we have $\Psi (1)=\tilde{1}$ and $\Psi (\theta_{\alpha})=\tilde{\theta_{\alpha}}$. 
Moreover, from Definitions~$\ref{cross}$ $(1)$ and $\ref{extcross}$ $(5)$, we obtain $\Psi \circ\Phi =\tilde{\Phi }\circ\Psi $ and $\Psi \circ\varphi _{\alpha} =\tilde{\varphi _{\alpha} }\circ\Psi $. 
Furthermore, from Figures~$\ref{dualpicture3}$ and $\ref{dualpicture4}$, the map $\Psi $ satisfies the following:  
\begin{enumerate}
\item $\tilde{m}(\Psi (u)\otimes\Psi (v))=\tilde{m}(\eta(u\otimes\cdot)\otimes\eta(v\otimes\cdot))=(\eta(v\otimes\cdot)\otimes\eta(u\otimes\cdot))\circ \Delta=\eta(uv\otimes\cdot)=\Psi (uv)$ for any $u,v\in L$, \label{m}
\item $\tilde{\eta}(\Psi (u)\otimes\Psi (v))=\tilde{\eta}(\eta(u\otimes \cdot)\otimes \eta(v\otimes \cdot))=\eta(u\otimes v)$ for any $u,v\in L$. \label{eta}
\end{enumerate}
These mean $\Psi $ is an isomorphism of extended crossed $\pi$-algebras. 
\begin{figure}[!h]
\begin{center}
\includegraphics[scale=0.4]{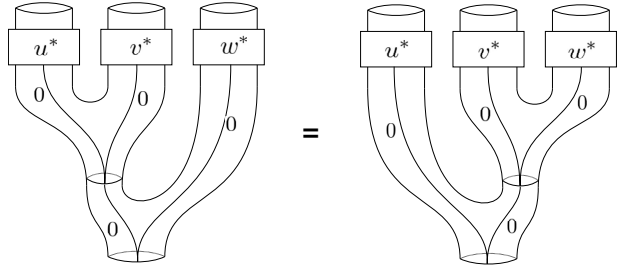}
\end{center}
\caption{The map $\tilde{m}$ is associative, that is, $\tilde{m}(w^{\ast}\otimes \tilde{m}(v^{\ast}\otimes u^{\ast}))=\tilde{m}(\tilde{m}(w^{\ast}\otimes v^{\ast})\otimes u^{\ast})$ for any $u^{\ast},v^{\ast},w^{\ast}\in\tilde{L}$.  }
\label{dualpicture}
\end{figure}
\begin{figure}[!h]
\begin{center}
\includegraphics[scale=0.51]{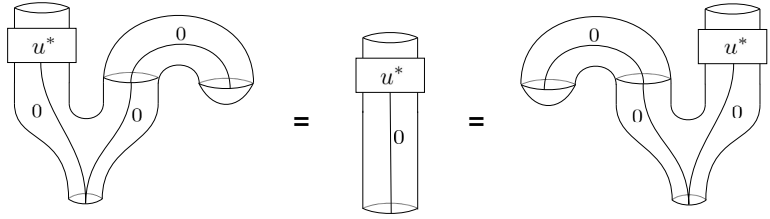}
\end{center}
\caption{The element $\tilde{1}$ is the unit, that is, $\tilde{m}( \tilde{1}\otimes u^{\ast})=u^{\ast}=\tilde{m}(u^{\ast}\otimes \tilde{1})$ for any $u^{\ast}\in\tilde{L}$.  }
\label{dualpicture2}
\end{figure}
\begin{figure}[!h]
\begin{center}
\includegraphics[scale=0.5]{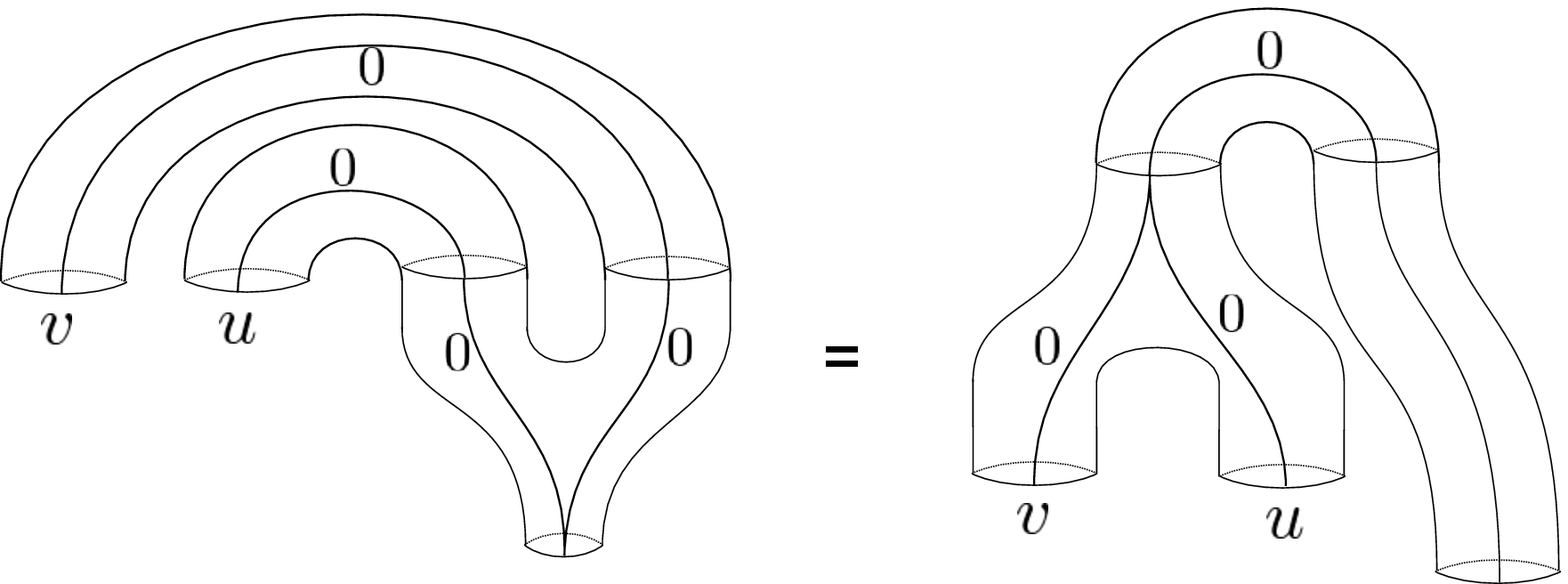}
\end{center}
\caption{The equation $(\ref{m})$ in the proof of Lemma~$\ref{duallem}$.  }
\label{dualpicture3}
\end{figure}
\begin{figure}[!h]
\begin{center}
\includegraphics[scale=0.55]{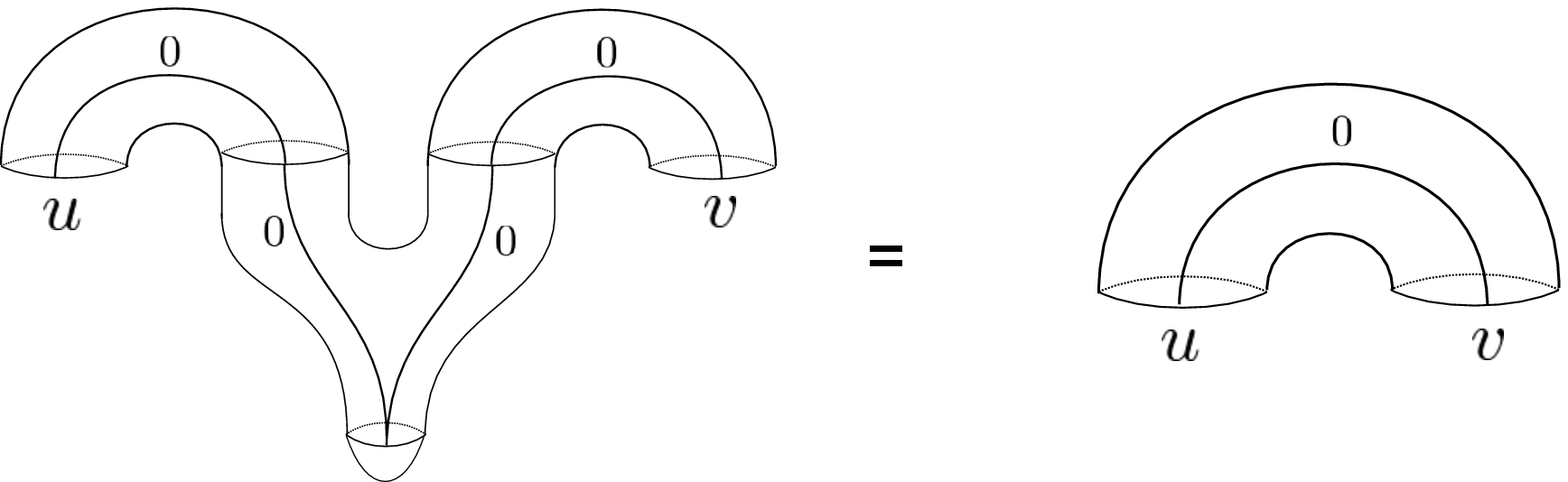}
\end{center}
\caption{The equation $(\ref{eta})$ in the proof of Lemma~$\ref{duallem}$.  }
\label{dualpicture4}
\end{figure}
\end{proof}
%
The isomorphism given in Lemma~$\ref{duallem}$ induces an isomorphism between $C_{(A,\tau)}^{i}(F,D)$ and the dual complex of $C_{(A,\tau)}^{i}(F,D^{!})$. 
\begin{proof}[{\bf Proof of Theorem~$\ref{dual}$}]
From the definition, we have 
\begin{align*}
C_{(A,\tau)}^{i}(F,D)=\bigoplus_{|\varepsilon|=i+n_{-}(D)}A(D_{\varepsilon}). 
\end{align*}
For $\varepsilon\in\{0,1\}^{n}$, we define $\tilde{\varepsilon}\in\{0,1\}^{n}$ by 
\begin{align*}
\tilde{\varepsilon_{i}}=
\begin{cases}
0 & \text{if\ } \varepsilon_{i}=1,\\
1 & \text{if\ } \varepsilon_{i}=0, \\
\end{cases}
\end{align*}
where $n$ is the number of crossings of $D$. 
Then we obtain $A(D^{!}_{\varepsilon})=A(D_{\tilde{\varepsilon}})$. 
From Lemma~$\ref{duallem}$, the following diagram is commutative. 
\begin{equation*}
\begin{CD}
A(D_{\varepsilon})  @>d_{(A,\tau)}^{i}(F, D)>> A(D_{\varepsilon'}) \\
@|  @| \\
A(D^{!}_{\tilde{\varepsilon}})   @.        A(D^{!}_{\tilde{\varepsilon'}}) \\
@V\cong Vv\mapsto \eta(v\otimes \cdot)V    @V\cong Vv\mapsto \eta(v\otimes \cdot)V  \\
A(D^{!}_{\tilde{\varepsilon}})^{\ast}  @>(d_{(A,\tau)}^{i}(F, D^{!}))^{\ast}>>  A(D^{!}_{\tilde{\varepsilon'}})^{\ast} 
\end{CD}
\end{equation*}
This diagram induces an isomorphisms 
\begin{align*}
C_{(A,\tau)}^{i}(F,D)\cong \{C_{(A,\tau)}^{i}(F,D^{!})\}^{\ast}. 
\end{align*}
\end{proof}
\begin{cor}
Let $(F,D)$ be an oriented link diagram on an oriented compact surface $F$ and $(F,D^{!})$ the link diagram obtained from $D$ by changing all crossings of $D$. 
For $\pi=H_{1}(F; \mathbf{F_{2}})$ and for any HQFT $(A, \tau)$ with target $K(\pi, 1)$ which preserves the S, T and 4-Tu relations given in Figure~$\ref{relation}$, we obtain 
\begin{align*}
H^{i}_{(A,\tau)}(F,D)\cong H^{-i}_{(A,\tau)}(F,D^{!}). 
\end{align*}
\end{cor}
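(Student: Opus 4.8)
The plan is to read off the corollary from Theorem~\ref{dual} together with the elementary fact that, over the field $\mathbf{F_{2}}$, the cohomology of a dual complex is the dual of the homology of the original complex. First I would recall that Theorem~\ref{dual} provides an isomorphism of cochain complexes
\begin{align*}
C_{(A,\tau)}^{\ast}(F,D)\cong \{C_{(A,\tau)}^{\ast}(F,D^{!})\}^{\ast},
\end{align*}
and that isomorphic complexes have isomorphic homology in each degree, so
\begin{align*}
H^{i}_{(A,\tau)}(F,D)\cong H^{i}\bigl(\{C_{(A,\tau)}^{\ast}(F,D^{!})\}^{\ast}\bigr).
\end{align*}

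Next I would compute the right-hand side directly from the definition of the dual complex fixed in the statement of Theorem~\ref{dual}. Writing $C^{\bullet}=C_{(A,\tau)}^{\bullet}(F,D^{!})$ with differentials $d^{\bullet}$, the degree-$i$ term of the dual complex is $\Hom(C^{-i};\mathbf{F_{2}})$ with incoming differential $(d^{-i})^{\ast}$ and outgoing differential $(d^{-i-1})^{\ast}$; hence its $i$-th cohomology is $\ker((d^{-i-1})^{\ast})/\operatorname{im}((d^{-i})^{\ast})$. Because $\mathbf{F_{2}}$ is a field, dualization is exact and the annihilator identities $\ker((d^{-i-1})^{\ast})=\operatorname{Ann}(\operatorname{im} d^{-i-1})$ and $\operatorname{im}((d^{-i})^{\ast})=\operatorname{Ann}(\ker d^{-i})$ hold; combining these with the canonical isomorphism $\operatorname{Ann}(A)/\operatorname{Ann}(B)\cong (B/A)^{\ast}$ for $A\subset B\subset C^{-i}$ (applied to $A=\operatorname{im} d^{-i-1}\subset B=\ker d^{-i}$) yields
\begin{align*}
H^{i}\bigl(\{C_{(A,\tau)}^{\ast}(F,D^{!})\}^{\ast}\bigr)\cong \bigl(H^{-i}_{(A,\tau)}(F,D^{!})\bigr)^{\ast}.
\end{align*}

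Finally I would remove the dual. Since $\pi=H_{1}(F;\mathbf{F_{2}})$ is finite-dimensional and each $A(D_{\varepsilon})$ is a finitely generated projective $\mathbf{F_{2}}$-module, every $C^{i}_{(A,\tau)}(F,D^{!})$ is a finite-dimensional $\mathbf{F_{2}}$-vector space, so its homology $H^{-i}_{(A,\tau)}(F,D^{!})$ is finite-dimensional and therefore (non-canonically) isomorphic to its own dual. Chaining the three isomorphisms gives $H^{i}_{(A,\tau)}(F,D)\cong H^{-i}_{(A,\tau)}(F,D^{!})$, as claimed.

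I expect the only real subtlety to be the bookkeeping in the cohomology-of-the-dual-complex computation, in particular keeping the index reversal $i\mapsto -i$ consistent with the degree conventions fixed in the definition of the dual complex; everything else (exactness of dualization, finite-dimensionality) is automatic over the field $\mathbf{F_{2}}$. An equivalent and perhaps cleaner route would be to invoke the universal coefficient theorem over $\mathbf{F_{2}}$, where the $\operatorname{Ext}$ term vanishes, so as to identify the cohomology of the dual complex with the dual of the homology in a single step.
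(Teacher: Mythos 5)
Your proposal is correct and matches the paper's intended argument: the paper states this corollary without further proof as an immediate consequence of Theorem~\ref{dual}, and the linear algebra you supply (over the field $\mathbf{F_{2}}$, the cohomology of the dual complex is the dual of the homology, and the finite-dimensional homology is isomorphic to its own dual) is exactly the omitted routine step. Your index bookkeeping with the convention $(C^{\ast})^{i}=\Hom(C^{-i};\mathbf{F_{2}})$, $(d^{\ast})^{i}=(d^{-i-1})^{\ast}$ is also consistent, so nothing is missing.
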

\subsection{Examples of unoriented HQFTs over $\mathbf{F_{2}}$}\label{link_homology1}
In this subsection, we construct an example of unoriented HQFTs over $\mathbf{F_{2}}$. 
\par
Let $\pi$ be an $\mathbf{F_{2}}$-vector space. 
Define $L_{0}$ to be the $\mathbf{F_{2}}$-vector space with basis $1$ and $x$, and $L_{\alpha}$ to be the $\mathbf{F_{2}}$-vector space with basis $y_{\alpha}$ and $z_{\alpha}$ for $0\neq\alpha\in\pi$. 
Set $L=\bigoplus_{\alpha\in\pi}L_{\alpha}$. 
We give $L$ an extended crossed $\pi$-algebra structure as follows: 
\begin{itemize}
\item $\varphi_{\alpha}:=\operatorname{id}\colon L\rightarrow L$ for any $\alpha\in\pi$, 
\item $\Phi:=\operatorname{id}\colon L\rightarrow L$, 
\item $\theta_{\alpha}:=0$ for any $\alpha \in\pi$, 
\item for distinct elements $\alpha, \beta \in\pi\setminus \{0\}$, the multiplication $m\colon L\otimes L\rightarrow L$ is given as follows: 
\begin{align*}
m(v\otimes 1)=m(1\otimes v)=&v, \\
m(x\otimes x)=&0, \\
m(y_{\alpha}\otimes x)=m(x\otimes y_{\alpha})=&0, \\
m(z_{\alpha}\otimes x)=m(x\otimes z_{\alpha})=&0, \\
m(y_{\alpha}\otimes z_{\alpha})=m(z_{\alpha}\otimes y_{\alpha})=&x, \\
m(y_{\alpha}\otimes y_{\alpha})=m(z_{\alpha}\otimes z_{\alpha})=&0, \\
m(y_{\alpha}\otimes z_{\beta})=m(z_{\alpha}\otimes y_{\beta})=&y_{\alpha+\beta}+z_{\alpha+\beta}, \\
m(y_{\alpha}\otimes y_{\beta})=&y_{\alpha+\beta}+z_{\alpha+\beta}, \\
m(z_{\alpha}\otimes z_{\beta})=&y_{\alpha+\beta}+z_{\alpha+\beta}, 
\end{align*}
\item the inner product $\eta\colon L_{\alpha}\otimes L_{\alpha}\rightarrow \mathbf{F_{2}}$ is given as follows: 
\begin{align*}
\eta (1\otimes 1)=&0, \\
\eta (x\otimes 1)=\eta (1\otimes x)=&1, \\
\eta (x\otimes x)=&0, \\
\eta (y_{\alpha}\otimes y_{\alpha})=\eta (z_{\alpha}\otimes z_{\alpha})=&0,\\ 
\eta (y_{\alpha}\otimes z_{\alpha})=\eta (z_{\alpha}\otimes y_{\alpha})=&1 . \\
\end{align*}
\end{itemize}
From Theorem~$\ref{thm1}$ and the following proposition, we obtain an unoriented HQFT with target $K(\pi,1)$. 
\begin{prop}\label{prophqft}
Let $\pi$ be an $\mathbf{F_{2}}$-vector space. 
Then the algebra $(L=\bigoplus_{\alpha\in\pi}L_{\alpha}, \eta, \varphi, \Phi, \{\theta_{\alpha}\}_{\alpha\in\pi})$ constructed as above is an extended crossed $\pi$-algebra over $\mathbf{F_{2}}$. 
From Theorem~$\ref{thm1}$, we also obtain the unoriented HQFT $(A,\tau)$ with target $K(\pi,1)$ corresponding to this extended crossed $\pi$-algebra. 
\end{prop}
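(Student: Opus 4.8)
The plan is to verify directly that the tuple $(L,\eta,\varphi,\Phi,\{\theta_{\alpha}\}_{\alpha\in\pi})$ satisfies every axiom of Definition~\ref{extcross}, after which the existence of the HQFT $(A,\tau)$ is immediate from the bijective correspondence of Theorem~\ref{thm1}. The first step I would take is to record how drastically the choices $\varphi_{\alpha}=\Phi=\id$ and $\theta_{\alpha}=0$ simplify the axioms. Since $\pi$ is an $\mathbf{F_{2}}$-vector space it is abelian with $\alpha^{-1}=\alpha$, so the conjugation grading $L_{\beta\alpha\beta^{-1}}$ is just $L_{\alpha}$ and axioms (1) and (2) of Definition~\ref{cross} hold trivially for $\varphi=\id$, while axiom (3) of Definition~\ref{cross} becomes exactly commutativity $ab=ba$. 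For the extended structure, axioms (1), (2), (4), (5) and (6) of Definition~\ref{extcross} are immediate for $\Phi=\id$, axiom (3) is again commutativity, and every axiom involving $\theta$, namely (8), (9), (10) and (11), reduces to $0=0$ once one substitutes $\theta_{\alpha}=0$ (in (11) the right-hand side is $q(1)\theta_{\alpha+\beta+\gamma}=0$ regardless of the value of $q(1)$). This leaves a short list of genuine assertions: that $(L,m,1)$ is a commutative associative $\pi$-graded unital algebra, that $\eta$ is a nondegenerate Frobenius pairing, and axiom (4) of Definition~\ref{cross} together with axiom (7) of Definition~\ref{extcross}.

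Next I would check the $\pi$-algebra and Frobenius structure by a finite case analysis organized by the gradings of the factors. Grading compatibility $L_{\alpha}L_{\beta}\subseteq L_{\alpha+\beta}$ and unitality are read directly off the multiplication table, and commutativity is visible from its symmetry. Non-degeneracy of $\eta$ on each $L_{\gamma}\otimes L_{\gamma}$ is clear because the Gram matrix is $\left(\begin{smallmatrix}0&1\\1&0\end{smallmatrix}\right)$ in the basis $\{1,x\}$ and in each basis $\{y_{\gamma},z_{\gamma}\}$. The Frobenius identity $\eta(ab,c)=\eta(a,bc)$ and associativity $(ab)c=a(bc)$ are then verified by running through the cases according to whether the gradings of $a,b,c$ are zero, mutually equal, distinct, or sum to zero. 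Here the field $\mathbf{F_{2}}$ does essential work: for distinct nonzero $\alpha,\beta$ every product of a generator of $L_{\alpha}$ with a generator of $L_{\beta}$ equals $w_{\alpha+\beta}:=y_{\alpha+\beta}+z_{\alpha+\beta}$, so a typical triple product collapses to $2w=0$, while the remaining subcases (when an intermediate grading vanishes) reduce to the relations $x^{2}=0$, $y_{\gamma}z_{\gamma}=x$ and $y_{\gamma}^{2}=z_{\gamma}^{2}=0$.

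Finally I would treat the two surviving homological axioms. For axiom (7) of Definition~\ref{extcross}, since $\Phi=\varphi_{\gamma}=\id$ and $\theta=0$ the identity to prove is $m\circ\Delta_{\alpha,\beta}=0$; writing $\Delta_{\alpha,\beta}(v)=\sum_{i}e_{i}\otimes v e^{i}$ for dual bases $\{e_{i}\},\{e^{i}\}$ of $L_{\alpha}$ with respect to $\eta$ (which is the content of relation (\ref{eq2-1})), one obtains $m\circ\Delta_{\alpha,\beta}(v)=v\,h_{\alpha}$ with handle element $h_{\alpha}=\sum_{i}e_{i}e^{i}\in L_{0}$. The dual basis of $\{1,x\}$ is $\{x,1\}$ and that of $\{y_{\gamma},z_{\gamma}\}$ is $\{z_{\gamma},y_{\gamma}\}$, so $h_{0}=1\cdot x+x\cdot 1=2x=0$ and $h_{\alpha}=y_{\alpha}z_{\alpha}+z_{\alpha}y_{\alpha}=2x=0$ in characteristic $2$; hence axiom (7) holds. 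Axiom (4) of Definition~\ref{cross} is likewise automatic over $\mathbf{F_{2}}$: as $\pi$ is abelian the relevant element $c$ lies in $L_{0}$, and multiplication by $1$ or by $x$ on any two-dimensional $L_{\gamma}$ has trace $2$ or $0$, both equal to $0$. With every axiom verified, Theorem~\ref{thm1} produces the corresponding unoriented $(1+1)$-dimensional HQFT $(A,\tau)$ with target $K(\pi,1)$.

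I expect the single real obstacle to be the associativity verification: it is the one step that is not a one-line consequence of $\Phi=\varphi=\id$, $\theta=0$, or characteristic $2$, and it requires careful bookkeeping over the grading subcases (equal gradings, distinct gradings, and gradings summing to zero, together with the degenerate interactions involving $x$) to confirm that both orders of multiplication always agree.
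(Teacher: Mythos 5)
Your proposal is correct and takes the same route as the paper, whose entire proof is the single sentence that one ``can directly check'' the axioms of an extended crossed $\pi$-algebra; you simply supply the verification the paper leaves implicit. Your reductions are all sound --- in particular the observation that axiom $(7)$ of Definition~\ref{extcross} collapses to $m\circ\Delta_{\alpha,\beta}=0$ via the vanishing handle elements $h_{0}=2x=0$ and $h_{\alpha}=y_{\alpha}z_{\alpha}+z_{\alpha}y_{\alpha}=2x=0$, and that the trace condition holds since multiplication by $1$ or $x$ on each two-dimensional $L_{\gamma}$ has trace $0$ over $\mathbf{F_{2}}$ --- and these computations agree with the explicit comultiplication listed in the paper's subsequent remark.
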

\begin{proof}
We can directly check that this algebra satisfies the axioms of extended crossed $\pi$-algebras. 
\end{proof}
\begin{rem}
The comultiplication $\Delta \colon L\rightarrow L\otimes L$ and the counit $\varepsilon \colon L_{0}\rightarrow \mathbf{F_{2}}$ of $L$ are given as follows: 
\begin{itemize}
\item $\Delta_{0,0} \colon L_{0}\rightarrow L_{0}\otimes L_{0}$ is 
\begin{align*}
\Delta_{0,0} (1)=&1\otimes x+x\otimes 1, \\
\Delta_{0,0} (x)=&x\otimes x, 
\end{align*}
\item $\Delta_{\alpha,\alpha} \colon L_{0}\rightarrow L_{\alpha}\otimes L_{\alpha}$ is 
\begin{align*}
\Delta_{\alpha,\alpha} (1)=&y_{\alpha} \otimes z_{\alpha} +z_{\alpha}\otimes y_{\alpha}, \\
\Delta_{\alpha,\alpha} (x)=&0, 
\end{align*}
\item $\Delta_{0,\alpha} \colon L_{\alpha}\rightarrow L_{0}\otimes L_{\alpha}$ is 
\begin{align*}
\Delta_{0,\alpha} (y_{\alpha})=&x\otimes y_{\alpha}, \\
\Delta_{0,\alpha} (z_{\alpha})=&x\otimes z_{\alpha}, 
\end{align*}
\item $\Delta_{\alpha,0} \colon L_{\alpha}\rightarrow L_{\alpha}\otimes L_{0}$ is $P\circ \Delta_{0,\alpha}$, where $P$ is the permutation, 
\item $\Delta_{\alpha,\beta} \colon L_{\alpha+\beta}\rightarrow L_{\alpha}\otimes L_{\beta}$ is 
\begin{align*}
\Delta_{\alpha,\beta} (y_{\alpha+\beta})=&y_{\alpha}\otimes y_{\beta}+y_{\alpha}\otimes z_{\beta}+z_{\alpha}\otimes y_{\beta}+z_{\alpha}\otimes z_{\beta}, \\
\Delta_{\alpha,\beta} (z_{\alpha+\beta})=&y_{\alpha}\otimes y_{\beta}+y_{\alpha}\otimes z_{\beta}+z_{\alpha}\otimes y_{\beta}+z_{\alpha}\otimes z_{\beta}, 
\end{align*}
\item $\varepsilon \colon L_{0}\rightarrow \mathbf{F_{2}}$ is 
\begin{align*}
\varepsilon (1)=&0, \\
\varepsilon (x)=&1. 
\end{align*}
\end{itemize}
\end{rem}
\begin{prop}\label{4-tu_prop}
Let $\pi$ be an $\mathbf{F_{2}}$-vector space and $(A,\tau)$ the HQFT established in Proposition~$\ref{prophqft}$. 
Suppose that $X$-cobordisms $W_{1}$, $W_{2}$, $W_{3}$ and $W_{4}$ are related as in the $4$-Tu relation depicted in Figure~$\ref{relation}$. 
Then we have $\tau(W_{1})+\tau(W_{2})=\tau(W_{3})+\tau(W_{4})$. 
\end{prop}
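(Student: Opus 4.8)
The plan is to reduce the identity $\tau(W_1)+\tau(W_2)=\tau(W_3)+\tau(W_4)$ to a purely algebraic computation in the extended crossed $\pi$-algebra $L$ of Proposition~\ref{prophqft}, and then to settle that computation. First I would use that the four $X$-cobordisms of the $4$-Tu relation in Figure~\ref{relation} coincide outside a ball $B\cong D^{2}\times[0,1]$ and differ only in how the four local sheets meeting $B$ are joined. By the gluing and disjoint-union axioms $(5)$ and $(6)$ of Definition~\ref{HQFT} together with the naturality axiom $(4)$, each $\tau(W_k)$ factors through the common part as $\tau(W_k)=G\circ\tau(W_k\cap B)\circ G'$ with $G,G'$ independent of $k$; hence it suffices to prove the relation for the local pieces $\tau(W_k\cap B)$. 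Since the boundary circles (the four feet) on $\partial B$ carry labels $\alpha_1,\alpha_2,\alpha_3,\alpha_4\in\pi$ that are the same for all four cobordisms, all four local maps share one source and one target, namely tensor products of the modules $A(\mathbf{S}^{1},\alpha_i)=L_{\alpha_i}$, so the claim becomes an equality of two honest $R$-linear maps.

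Next I would translate each local cobordism into the structure maps of $L$: a merging saddle becomes the multiplication $m\colon L_{\alpha}\otimes L_{\beta}\to L_{\alpha+\beta}$, a splitting saddle becomes the comultiplication $\Delta_{\alpha,\beta}$, and caps and cups become the counit, the unit, and the forms built from $\eta$. The crucial simplification is that for the HQFT of Proposition~\ref{prophqft} one has $\varphi_{\alpha}=\Phi=\id$ and $\theta_{\alpha}=0$, so no twisting enters and each $\tau(W_k\cap B)$ is expressed purely through $m$, $\Delta$, $\eta$, the unit and the counit of the underlying $\pi$-graded Frobenius algebra. At this stage the desired equality is a standard consequence of the neck-cutting relation for $L$, which holds because $\eta$ is non-degenerate on each $L_{\alpha}\otimes L_{\alpha}$ (note $\alpha=\alpha^{-1}$ since $\pi$ is an $\mathbf{F_{2}}$-vector space) and $\Delta_{\alpha,\beta}$ is the two-sided dual of $m$ via $(\ref{eq2-1})$. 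Concretely, neck-cutting lets one cut the tube joining two sheets and replace it by the copairing $\Delta_{\alpha,\alpha}(1)=\sum_i a_i\otimes b_i$, after which the dependence on which pair of sheets was joined is governed by the handle element $m\circ\Delta_{\alpha,\alpha}(1)$.

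I would then record that handle element from the formulas following Proposition~\ref{prophqft}: for $\alpha=0$ one has $m\circ\Delta_{0,0}(1)=x+x=0$, and for $\alpha\neq0$ one has $m\circ\Delta_{\alpha,\alpha}(1)=m(y_{\alpha}\otimes z_{\alpha})+m(z_{\alpha}\otimes y_{\alpha})=x+x=0$, so the handle element vanishes over $\mathbf{F_{2}}$. This collapses all ``same-sheet'' configurations and leaves only the genuinely distinct pairings to check. For those I would verify the equality by evaluating both $\tau(W_1)+\tau(W_2)$ and $\tau(W_3)+\tau(W_4)$ on a basis of the source, substituting the explicit multiplication and comultiplication tables (for instance $m(y_{\alpha}\otimes y_{\beta})=y_{\alpha+\beta}+z_{\alpha+\beta}$ for $\alpha\neq\beta$ against $m(y_{\alpha}\otimes y_{\alpha})=0$), organised according to the label configuration of $\alpha_1,\dots,\alpha_4$: all zero, some coinciding, or pairwise distinct and nonzero.

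The main obstacle I expect is precisely this case analysis together with the mod-$2$ bookkeeping, since the products in $L$ repeatedly return the ``doubled'' vectors $y_{\gamma}+z_{\gamma}$ and the nilpotent $x$, and one must track which monomials survive, confirm that the surplus terms cancel in $\mathbf{F_{2}}$, and match the $\pi$-gradings on both sides. The cleanest route to bypass the enumeration is to prove the general neck-cutting relation for $L$ once and for all, which is a formal consequence of the Frobenius axioms and the vanishing of the handle element, and then to deduce the $4$-Tu relation as its standard corollary; the direct basis computation would serve only as a cross-check for the low-rank label configurations.
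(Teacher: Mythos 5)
Your skeleton---localize to a ball by the gluing and disjoint-union axioms, then compute in the algebra $L$---is the same route the paper takes (the paper compresses the reduction into the sentence ``we only need to check the case depicted in Figure~\ref{4-tu}'' and then verifies that single configuration). But your key lemma is false as stated. You claim that after neck-cutting ``the dependence on which pair of sheets was joined is governed by the handle element $m\circ\Delta_{\alpha,\alpha}(1)$,'' so that $4$-Tu is ``a standard corollary'' of neck-cutting plus the vanishing of the handle element. Neck-cutting in the form $\sum_i\eta(b_i\otimes v)a_i=v$, for $\Delta(1)=\sum_i a_i\otimes b_i$, is the snake identity and holds in \emph{every} Frobenius algebra, while the handle element can vanish with $4$-Tu failing: over $\mathbf{F_{2}}$ take $A=\mathbf{F_{2}}[x]/(x^{2})$ with counit $\varepsilon(1)=\varepsilon(x)=1$. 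Then $\eta$ is non-degenerate, $\Delta(1)=1\otimes x+x\otimes 1+x\otimes x$ and $m\circ\Delta(1)=0$, yet cutting the tube joining sheets $p,q$ yields the insertion operator $x_{p}+x_{q}+x_{p}x_{q}$, and the quadratic terms give $x_{1}x_{2}+x_{3}x_{4}\neq x_{1}x_{3}+x_{2}x_{4}$ (evaluate on $1\otimes 1\otimes 1\otimes 1$), so $\tau(W_{1})+\tau(W_{2})\neq\tau(W_{3})+\tau(W_{4})$. What actually makes Proposition~\ref{4-tu_prop} true is the specific \emph{shape} of the copairing for the algebra of Proposition~\ref{prophqft}: $\Delta_{0,0}(1)=1\otimes x+x\otimes 1$, each summand containing the unit, so a cut tube contributes $x_{p}+x_{q}$, an expression \emph{additive} in the two sheets; both pairings $(12),(34)$ and $(13),(24)$ then sum to $x_{1}+x_{2}+x_{3}+x_{4}$. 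This is precisely the content of the paper's displayed computation $1\otimes 1\otimes(1\otimes x+x\otimes 1)+(1\otimes x+x\otimes 1)\otimes 1\otimes 1=1\otimes1\otimes x\otimes1+x\otimes1\otimes1\otimes1+1\otimes1\otimes1\otimes x+1\otimes x\otimes1\otimes1$.

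Your second deviation is the case analysis over label configurations $\alpha_{1},\dots,\alpha_{4}$ (``some coinciding, or pairwise distinct and nonzero''), which is spurious. In any instance of the $4$-Tu relation the four circles along which the local piece is glued lie on the boundary of a ball $B\subset F\times[0,1]$, and each of them bounds a disk of the cobordism inside $B$ in at least two of the four cobordisms $W_{k}$; since all four share their characteristic map near $\partial B$, every one of these labels is forced to be $0$ (a disk, or the neck circle of a tube inside $B$, cannot carry a label $\alpha\neq 0$; moreover the coexistence of the three pairings would force all four labels equal anyway). Consequently the multiplication rules you plan to feed into the verification, such as $m(y_{\alpha}\otimes y_{\beta})=y_{\alpha+\beta}+z_{\alpha+\beta}$, never enter: the only computation required lives in $L_{0}^{\otimes 4}$ and is the one the paper performs. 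Your plan would eventually stumble onto that correct label-$0$ check, but as written the proposed reduction lemma is wrong, and the enumeration it is meant to organize concerns configurations that do not occur.
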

\begin{proof}
We only need to check the case depicted in Figure~$\ref{4-tu}$. 
In this case, we can compute 
\begin{align*}
\tau(W_{1})+\tau(W_{2})&=1\otimes 1\otimes(1\otimes x+x\otimes 1)+(1\otimes x+x\otimes 1)\otimes1\otimes 1, \\
\tau(W_{3})+\tau(W_{4})&=1\otimes 1\otimes x\otimes 1+x\otimes 1\otimes 1\otimes 1+1\otimes 1\otimes1\otimes x+1\otimes x\otimes1\otimes 1.  
\end{align*}
Hence we have $\tau(W_{1})+\tau(W_{2})=\tau(W_{3})+\tau(W_{4})$. 
\end{proof}
\begin{figure}[!h]
\begin{center}
\includegraphics[scale=0.6]{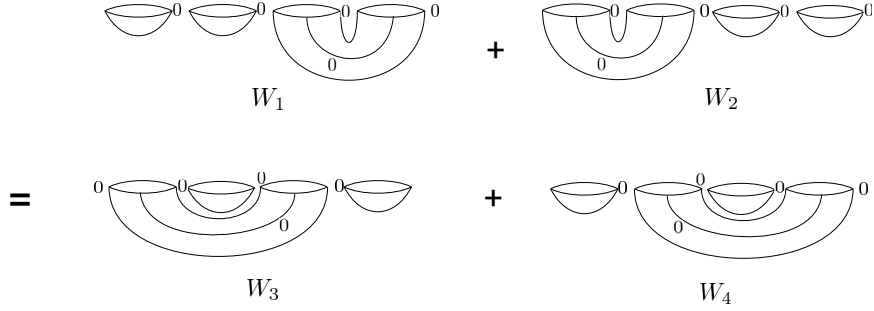}
\end{center}
\caption{$4$-Tube relation. }
\label{4-tu}
\end{figure}
\subsection{A non-vanishing condition}\label{section_nonvanish}
Let $(F,D)$ be an oriented link diagram $D$ on an oriented compact surface $F$ with $n_{-}(D)$ negative crossings and let $(A,\tau)$ be the HQFT constructed in Proposition~$\ref{prophqft}$. 
In this subsection, we find a condition where $H^{-n_{-}(D)}_{(A,\tau)}(F,D)\neq 0$ (Theorem~$\ref{non_vanish}$). 
Consider the complex: 
\begin{equation*}
\begin{CD}
0  @>>>C_{(A,\tau)}^{-n_{-}(D)}(F,D)@>d_{(A,\tau)}^{-n_{-}(D)}>> C_{(A,\tau)}^{-n_{-}(D)+1}(F,D)@>>>\cdots .\\
\end{CD}
\end{equation*}
By the definition, we have $H^{-n_{-}(D)}_{(A,\tau)}(F,D)=\ker d_{(A,\tau)}^{-n_{-}(D)}$. 
Suppose that $\ker d_{(A,\tau)}^{-n_{-}(D)}\neq 0$. 
Since $\Delta_{0,0}$, $\Delta_{0,\alpha}$ and $\Delta_{\alpha,0}$ $(\alpha \neq 0)$ are injective, the cobordisms defining ``$d_{(A,\tau)}^{-n_{-}(D)}$'' have no cbordism depicted in Figure~$\ref{injective_split}$. 
\begin{figure}[!h]
\begin{center}
\includegraphics[scale=0.5]{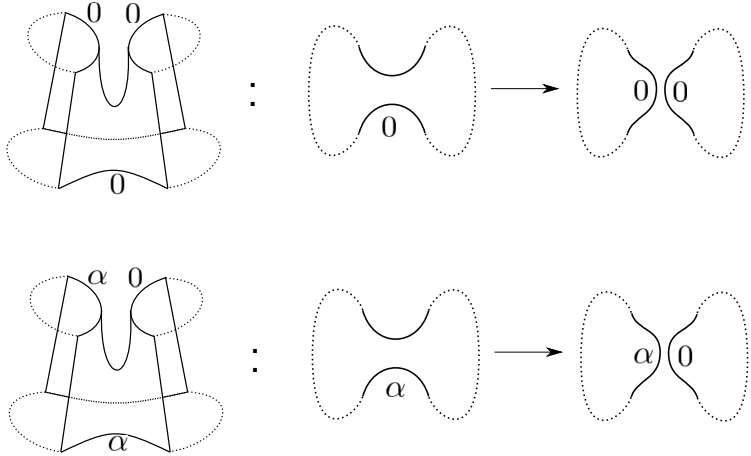}
\end{center}
\caption{The labels are the elements in $H_{1}(F;\mathbf{F_{2}})$ represented by the corresponding circles.  }
\label{injective_split}
\end{figure}
Hence the following condition (Definition~$\ref{condition}$) is necessary condition to be $\ker d_{(A,\tau)}^{-n_{-}(D)}\neq 0$. 
\begin{defn}\label{condition}
Let $D$ be an oriented link diagram on an oriented compact surface $F$ with $n$ crossings and let $D_{\mathbf{0}}$ be the smoothing where all crossings of $D$ are $0$-smoothed. 
Then $D$ is {\it weak plus-adequate} if for all $\varepsilon\in\{0,1\}^{n}$ with $|\varepsilon|=1$, the smoothing $D_{\varepsilon}$ is obtained from $D_{\mathbf{0}}$ by one of the following: 
\begin{itemize}
\item two circles of $D_{\mathbf{0}}$ merge into one circle of $D_{\varepsilon}$, 
\item a circle of $D_{\mathbf{0}}$ which is $0$-homologue splits into two circles of $D_{\varepsilon}$ which are not $0$-homologue (see Figure~$\ref{split1}$), 
\item a circle of $D_{\mathbf{0}}$ which is not $0$-homologue splits into two circles of $D_{\varepsilon}$ which are not $0$-homologue (see Figure~$\ref{split2}$), 
\item a circle of $D_{\mathbf{0}}$ changes into a circle of $D_{\varepsilon}$ (that is, the number of circles of $D_{\varepsilon}$ equals that of $D_{\mathbf{0}}$).  
\end{itemize}
\begin{figure}[!h]
\begin{center}
\includegraphics[scale=0.45]{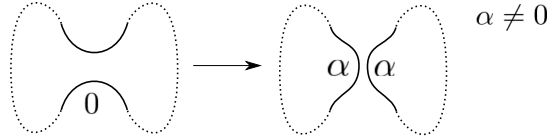}
\end{center}
\caption{A circle which is $0$-homologue splits into two circles which are not $0$-homologue.  }
\label{split1}
\end{figure}
\begin{figure}[!h]
\begin{center}
\includegraphics[scale=0.45]{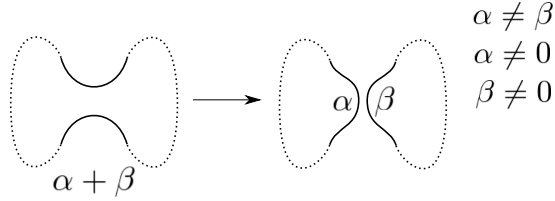}
\end{center}
\caption{A circle which is not $0$-homologue splits into two circles which are not $0$-homologue.  }
\label{split2}
\end{figure}
\end{defn}
\begin{rem}
A diagram is {\it plus-adequate} if, for  all $\varepsilon\in\{0,1\}^{n}$ with $|\varepsilon|=1$, the smoothing $D_{\varepsilon}$ is obtained from $D_{\mathbf{0}}$ by merging two circles into one circle.
Hence a plus-adequate diagram is weak plus-adequate. 
\end{rem}
Theorem~$\ref{non_vanish}$ ensures that the weak plus-adequacy is a sufficient and necessary condition to be $\ker d_{(A,\tau)}^{-n_{-}(D)}\neq 0$. 
\begin{proof}[{\bf Proof of Theorem~$\ref{non_vanish}$}]
If $(F,D)$ is not weak plus-adequate, from above discussion, $d_{(A,\tau)}^{-n_{-}(D)}$ is injective and $H^{-n_{-}(D)}_{(A,\tau)}(F,D)= 0$.
\par
If $(F,D)$ is weak plus-adequate, take an element $k$ in $A(D_{\mathbf{0}})$ as follows: 
Associate $x$ to each circle of $D_{\mathbf{0}}$ which is $0$-homologue and 
associate $y_{\alpha}+z_{\alpha}$ to each of the other circles of $D_{\mathbf{0}}$, where $\alpha$ is the corresponding element in $H_{1}(F;\mathbf{F_{2}})$ to the circle. 
Then $k$ is defined by the tensor product for these elements (see, for example, Figure~$\ref{non-zero-element}$). 
By below computation, $k$ is a non-zero element in $\ker d_{(A,\tau)}^{-n_{-}(D)}$. 
Hence we have $H^{-n_{-}(D)}_{(A,\tau)}(F,D)\neq 0$. 
\begin{align*}
x\cdot x &=0, \\
x\cdot (y_{\alpha}+z_{\alpha})&=0, \\
(y_{\alpha}+z_{\alpha})\cdot (y_{\alpha}+z_{\alpha})&=2x=0, \\
(y_{\alpha}+z_{\alpha})\cdot (y_{\beta}+z_{\beta})&=4(y_{\alpha+\beta}+z_{\alpha+\beta})=0, \\
\Delta_{\alpha,\alpha}(x)&=0, \\
\Delta_{\alpha,\beta}(y_{\alpha +\beta}+z_{\alpha +\beta})&=2(y_{\alpha}\otimes y_{\beta}+y_{\alpha}\otimes z_{\beta}+z_{\alpha}\otimes y_{\beta}+z_{\alpha}\otimes z_{\beta})=0, \\
\theta_{0} \cdot x&=\theta_{\alpha}\cdot (y_{\alpha}+z_{\alpha})=0. 
\end{align*}
\end{proof}
\begin{figure}[!h]
\begin{center}
\includegraphics[scale=0.65]{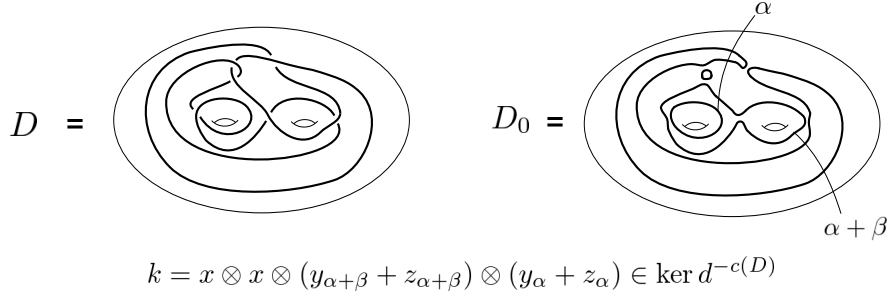}
\end{center}
\caption{An example of ``$k$". Let $D$ be the diagram on the oriented closed surface with genus $2$ depicted as above. Then $D_{\mathbf{0}}$ has two circles which are $0$-homologue and two circles which are not  $0$-homologue. 
Suppose that the homology classes of the circles are $\alpha$ and $\alpha+\beta$, respectively. 
Then, the element $k=x\otimes x\otimes(y_{\alpha+\beta}+z_{\alpha+\beta})\otimes(y_{\alpha}+z_{\alpha})$ is a non-zero element in $\ker d_{(A,\tau)}^{-c(D)}$. }
\label{non-zero-element}
\end{figure}
\begin{cor}\label{negative}
Let $(F,D)$ be an oriented link diagram $D$ on an oriented compact surface $F$ and let $(A, \tau)$ be the unoriented HQFT constructed in Proposition~$\ref{prophqft}$. 
Let $c_{-}(F, D)$ be the minimum over all the numbers of negative crossings of oriented link diagrams which are strongly equivalent to $(F, D)$. 
Then we obtain $c_{-}(F, D)\geq -i_{\min}(F,D):=-\min \{i\in\mathbf{Z}\mid H^{i}_{(A,\tau)}(F,D)\neq 0\}$. 
The equality holds if and only if $(F, D)$ is weak plus-adequate. 
\end{cor}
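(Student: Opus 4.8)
The plan is to deduce the whole statement from Theorem~\ref{non_vanish}, the strong-equivalence invariance of the homology (Lemma~\ref{mainlem}), and the elementary fact that the complex is supported in homological degrees $\geq -n_{-}(D)$. First I would record this support bound: since $C^{i}_{(A,\tau)}(F,D)=\bigoplus_{|\varepsilon|=i+n_{-}(D)}A(D_{\varepsilon})$ and no $\varepsilon\in\{0,1\}^{n}$ has $|\varepsilon|<0$, the module $C^{i}_{(A,\tau)}(F,D)$ is zero for $i<-n_{-}(D)$, so $i_{\min}(F,D)\geq -n_{-}(D)$ holds for every diagram. Next let $(F',D')$ be any diagram strongly equivalent to $(F,D)$. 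By Lemma~\ref{mainlem} the homologies agree, so $i_{\min}(F,D)=i_{\min}(F',D')\geq -n_{-}(D')$; since $c_{-}(F,D)$ is by definition the infimum of $n_{-}(D')$ over such representatives, this yields $i_{\min}(F,D)\geq -c_{-}(F,D)$, which is exactly the asserted inequality $c_{-}(F,D)\geq -i_{\min}(F,D)$.

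For the \emph{if} direction of the equality, suppose $(F,D)$ is weak plus-adequate. Theorem~\ref{non_vanish} then gives $H^{-n_{-}(D)}_{(A,\tau)}(F,D)\neq 0$, so $i_{\min}(F,D)\leq -n_{-}(D)$; together with the support bound $i_{\min}(F,D)\geq -n_{-}(D)$ this forces $i_{\min}(F,D)=-n_{-}(D)$, i.e. $-i_{\min}(F,D)=n_{-}(D)$. Since $D$ is a representative of its own class we have the trivial estimate $c_{-}(F,D)\leq n_{-}(D)$, while the first paragraph gives $c_{-}(F,D)\geq -i_{\min}(F,D)=n_{-}(D)$; squeezing, $c_{-}(F,D)=n_{-}(D)=-i_{\min}(F,D)$. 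In particular a weak plus-adequate diagram already realizes the minimal negative-crossing number in its strong-equivalence class.

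The \emph{only if} direction is where I expect the principal difficulty. The natural route is: assuming $c_{-}(F,D)=-i_{\min}(F,D)$, choose a strongly equivalent diagram $D'$ realizing $n_{-}(D')=c_{-}(F,D)$; invariance gives $i_{\min}(F',D')=i_{\min}(F,D)=-n_{-}(D')$, so $H^{-n_{-}(D')}_{(A,\tau)}(F',D')\neq 0$ and Theorem~\ref{non_vanish} makes $D'$ weak plus-adequate. The obstacle is that this yields weak plus-adequacy of the minimizing representative $D'$ rather than of the given $D$: by Definition~\ref{condition} weak plus-adequacy constrains the $|\varepsilon|=1$ smoothings of a fixed diagram and is not manifestly invariant under Reidemeister moves, since for instance a Reidemeister~I kink can create a null-homologous split forbidden by Definition~\ref{condition} while leaving both $i_{\min}$ and $c_{-}$ untouched. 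The step I would scrutinize most carefully is therefore the passage from $D'$ back to $D$---equivalently, whether the explicit cycle in $\ker d^{-n_{-}(D)}$ constructed in the proof of Theorem~\ref{non_vanish} can be transported along the moves connecting the two diagrams---and, if this fails, whether the intended reading of the \emph{only if} is as a statement about a minimal diagram in the strong-equivalence class, for which the argument above is already complete.
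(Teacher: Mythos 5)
Your first two paragraphs supply, correctly and in full, what the paper leaves implicit: the paper's entire proof of this corollary is the single sentence that it ``follows from the definition of $H^{i}_{(A,\tau)}(F,D)$ and Theorem~$\ref{non_vanish}$'', which unpacks to exactly your three ingredients --- the support bound $C^{i}_{(A,\tau)}(F,D)=0$ for $i<-n_{-}(D)$, the strong-equivalence invariance of the homology (Lemma~$\ref{mainlem}$), and Theorem~$\ref{non_vanish}$. So for the inequality and the \emph{if} direction (including the observation that a weak plus-adequate diagram automatically realizes $c_{-}(F,D)=n_{-}(D)$) your argument is the paper's intended one.

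Your third paragraph identifies a genuine defect in the statement rather than a gap you failed to close, and your diagnosis is right. By the support bound together with Theorem~$\ref{non_vanish}$, weak plus-adequacy of $D$ is equivalent to $i_{\min}(F,D)=-n_{-}(D)$, hence to the conjunction of the equality $c_{-}(F,D)=-i_{\min}(F,D)$ with $n_{-}(D)=c_{-}(F,D)$; the second condition does not follow from the first, and the paper's one-line proof never addresses this. Concretely, append a small negative kink (Reidemeister~I) to a weak plus-adequate diagram $D'$: for a negative kink the $0$-smoothing of the new crossing is the plain strand, so the $|\varepsilon|=1$ transition at that crossing splits off a null-homotopic, hence $0$-homologous, circle from a circle of $D_{\mathbf{0}}$, and none of the four transitions allowed in Definition~$\ref{condition}$ applies; thus the new diagram $D$ is not weak plus-adequate, yet $c_{-}(F,D)=-i_{\min}(F,D)$ still holds because both sides are invariants of the strong-equivalence class (consistently, $H^{-n_{-}(D)}_{(A,\tau)}(F,D)=0$ since $-n_{-}(D)<i_{\min}(F,D)$). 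The statement your argument actually proves, and the reading the corollary needs, is: the equality holds if and only if the strong-equivalence class contains a weak plus-adequate diagram, equivalently if and only if every diagram realizing $c_{-}(F,D)$ is weak plus-adequate; for the fixed diagram $D$ itself the \emph{only if} direction requires the additional hypothesis that $D$ is minimal, i.e.\ $n_{-}(D)=c_{-}(F,D)$. With that reading your proof is complete, and there is no need to transport the explicit cycle of Theorem~$\ref{non_vanish}$ along Reidemeister moves.
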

\begin{proof}
It follows from the definition of $H^{i}_{(A,\tau)}(F,D)$ and Theorem~$\ref{non_vanish}$. 
\end{proof}
\begin{rem}
Analogously, we define a weak minus-adequate diagram. 
Let $D$ be an oriented link diagram on an oriented compact surface $F$ with $n$ crossings and let $D_{\mathbf{1}}$ be the smoothing where all crossings of $D$ are $1$-smoothed. 
Then $D$ is {\it weak minus-adequate} if for all $\varepsilon\in\{0,1\}^{n}$ with $|\varepsilon|=n-1$, the smoothing $D_{\mathbf{1}}$ is obtained from $D_{\varepsilon}$ by one of the following: 
\begin{itemize}
\item a circles of $D_{\varepsilon}$ split into two circle of $D_{\mathbf{1}}$, 
\item two circles of $D_{\varepsilon}$ which are not $0$-homologue merge into a circle of $D_{\mathbf{1}}$ which is $0$-homologue, 
\item two circles of $D_{\varepsilon}$ which are not $0$-homologue merge into a circle of $D_{\mathbf{1}}$ which is not $0$-homologue, 
\item a circle of $D_{\varepsilon}$ changes into a circle of $D_{\mathbf{1}}$.  
\end{itemize}
Under the setting in Corollary~$\ref{negative}$, we also have 
$c_{+}(F, D)\geq i_{\max}(F,D):=\max \{i\in\mathbf{Z}\mid H^{i}_{(A,\tau)}(F,D)\neq 0\}$. 
From the duality relation (Theorem~$\ref{dual}$) and Corollary~$\ref{negative}$, we obtain the equality if and only if $(F, D)$ is weak minus-adequate. 
\end{rem}
\begin{cor}
Let $(F,D)$ be an oriented link diagram $D$ on an oriented compact surface $F$. 
Denote by $c(F, D)$ the minimal number of crossings of oriented link diagrams which are strongly equivalent to $(F, D)$. 
If $(F, D)$ is weak plus-adequate and weak minus-adequate, then the diagram $D$ is minimal, that is, it has $c(F, D)$ crossings. 
\end{cor}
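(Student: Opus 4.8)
The plan is to bound the total crossing number from below by a strong-equivalence invariant extracted from the homology, and then to show that the weak adequacy hypotheses force $D$ to realize that bound. Throughout, fix the HQFT $(A,\tau)$ of Proposition~\ref{prophqft} and write $i_{\min}(F,D)$ and $i_{\max}(F,D)$ for the lowest and highest homological degrees in which $H^{\ast}_{(A,\tau)}(F,D)$ is nonzero. By Lemma~\ref{mainlem} the homology, hence both $i_{\min}$ and $i_{\max}$, depends only on the strong equivalence class of $(F,D)$.

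First I would record the elementary support bounds. For an arbitrary oriented diagram $D'$ on $F$ with $n_{+}(D')$ positive and $n_{-}(D')$ negative crossings, the chain group $C^{i}_{(A,\tau)}(F,D')=A([[(F,D')]]^{i})$ vanishes unless $-n_{-}(D')\le i\le n_{+}(D')$, directly from $[[(F,D')]]^{i}=\bigoplus_{|\varepsilon|=i+n_{-}(D')}D'_{\varepsilon}$. Consequently $i_{\min}(F,D')\ge -n_{-}(D')$ and $i_{\max}(F,D')\le n_{+}(D')$, so the total number of crossings satisfies
\begin{align*}
n_{+}(D')+n_{-}(D')\ge i_{\max}(F,D')-i_{\min}(F,D')=i_{\max}(F,D)-i_{\min}(F,D),
\end{align*}
the last equality by strong-equivalence invariance. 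Taking the minimum over all $D'$ strongly equivalent to $(F,D)$ yields $c(F,D)\ge i_{\max}(F,D)-i_{\min}(F,D)$.

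Next I would show that the hypotheses make $D$ attain this bound, that is, $i_{\min}(F,D)=-n_{-}(D)$ and $i_{\max}(F,D)=n_{+}(D)$. The first equality is immediate: since $D$ is weak plus-adequate, Theorem~\ref{non_vanish} gives $H^{-n_{-}(D)}_{(A,\tau)}(F,D)\neq 0$, which together with the support bound forces $i_{\min}(F,D)=-n_{-}(D)$. For the second I would pass to the mirror diagram $D^{!}$: changing every crossing interchanges the $0$- and $1$-smoothings and interchanges positive with negative crossings, so $D$ is weak minus-adequate if and only if $D^{!}$ is weak plus-adequate, and $n_{-}(D^{!})=n_{+}(D)$. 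Applying Theorem~\ref{non_vanish} to $D^{!}$ gives $H^{-n_{-}(D^{!})}_{(A,\tau)}(F,D^{!})\neq 0$, and the corollary to Theorem~\ref{dual} identifies this with $H^{n_{-}(D^{!})}_{(A,\tau)}(F,D)=H^{n_{+}(D)}_{(A,\tau)}(F,D)$; hence $i_{\max}(F,D)=n_{+}(D)$.

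Combining the two steps, the number of crossings of $D$ equals $n_{+}(D)+n_{-}(D)=i_{\max}(F,D)-i_{\min}(F,D)$, which by the first step is a lower bound for $c(F,D)$; since $c(F,D)$ cannot exceed the number of crossings of $D$, both coincide and $D$ is minimal. The only non-routine step is the mirror correspondence used for the top degree, namely that weak minus-adequacy of $D$ is exactly weak plus-adequacy of $D^{!}$ under the smoothing interchange, so that Theorem~\ref{non_vanish} and the duality of Theorem~\ref{dual} can be combined to locate the maximal nonvanishing degree; this is the analog, for $i_{\max}$, of the statement already noted in the Remark following Corollary~\ref{negative}.
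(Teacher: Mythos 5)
Your proof is correct and follows essentially the route the paper intends: it states this corollary without an explicit proof, as an immediate consequence of Corollary~\ref{negative} (equality $c_{-}(F,D)=n_{-}(D)=-i_{\min}$ from weak plus-adequacy via Theorem~\ref{non_vanish}) and the subsequent remark (equality $c_{+}(F,D)=n_{+}(D)=i_{\max}$ from weak minus-adequacy via Theorem~\ref{dual}), combined with the support bound $-n_{-}(D')\le i\le n_{+}(D')$ and strong-equivalence invariance. Your argument simply unpacks those two ingredients, including the mirror correspondence between weak minus-adequacy of $D$ and weak plus-adequacy of $D^{!}$, which is exactly the duality step the paper's remark invokes.
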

\section{Examples of computations}\label{section_example}
\subsection{How to compute the homology}
In this subsection, we explain how to compute our homology defined in Section~$\ref{link_homology2}$. 
\par
Let $(F, D)$ be an oriented link diagram. 
Set $\pi=H_{1}(F; \mathbf{F}_{2})$. 
Let $(L=\bigoplus_{\alpha\in\pi}L_{\alpha}, \eta, \varphi, \{\theta_{\alpha}\}_{\alpha\in\pi}, \Phi)$ be the extended crossed $\pi$-algebra corresponding to an unoriented HQFT $(A, \tau)$ over $\mathbf{F}_{2}$ (see Definition~$\ref{extcross}$ and Theorem~$\ref{thm1}$). 
\par
Fistly, fix an order of the crossings of $D$. 
Let $n$ be the number of the crossings of $D$. 
Then for any $\varepsilon\in\{0,1\}^{n}$, we obtain the smoothing $D_{\varepsilon}$, that is, the $i$-th crossing of $D$ is $\varepsilon_{i}$-smoothed (see Figure~$\ref{smoothing}$ and Section~$\ref{geometric_cpx}$). 
Note that $D_{\varepsilon}$ is a collection of disjoint circles on $F$. 
\par
Secondly, we assign $L_{\alpha}$ to each circle in $D_{\varepsilon}$ whose homology class is $\alpha\in\pi$. 
The module $A(D_{\varepsilon})$ is given as the tensor product among the modules assigned to the circles in $D_{\varepsilon}$. 
Namely, 
\begin{align*}
A(D_{\varepsilon})=\bigotimes_{\alpha\in\pi}L_{\alpha}^{\otimes k_{\varepsilon}^{\alpha}}, 
\end{align*}
where $k_{\varepsilon}^{\alpha}$ is the number of the circles in $D_{\varepsilon}$ whose homology class is $\alpha\in\pi=H_{1}(F; \mathbf{F}_{2})$. 
Then $C^{i}_{(A, \tau)}(F, D)$ is given by 
\begin{align*}
C^{i}_{(A, \tau)}(F, D)=\bigoplus_{|\varepsilon|=i+n_{-}(D)}A(D_{\varepsilon}), 
\end{align*}
where $n_{-}(D)$ is the number of the negative crossings of $D$. 
\par
Thirdly, we recall the definition of the differential map $d^{i}_{(A, \tau)}\colon C^{i}_{(A, \tau)}(F, D) \rightarrow C^{i+1}_{(A, \tau)}(F, D)$. 
Take elements $\varepsilon$, $\varepsilon'\in\{0,1\}^{n}$ such that $\varepsilon_{j}=0$ and $\varepsilon'_{j}=1$ for some $j$ and that $\varepsilon_{i}=\varepsilon'_{i}$ for any $i\neq j$. 
For such a pair $(\varepsilon, \varepsilon')$ we will define a map $d_{\varepsilon\to \varepsilon'}\colon A(D_{\varepsilon})\to A(D_{\varepsilon'})$. 
\par
In the case where two circles of $D_{\varepsilon}$ merge into one circle of $D_{\varepsilon'}$, the map $d_{\varepsilon\to \varepsilon'}$ is the identity on all factors except the tensor factors corresponding to the merged circles where it is the multiplication $m\colon L_{\alpha}\otimes L_{\beta} \to L_{\alpha+\beta}$, where $\alpha, \beta \in \pi$ are the homology classes of the merged circles of $D_{\varepsilon}$. 
\par
In the case where one circle of $D_{\varepsilon}$ split into two circles of $D_{\varepsilon'}$, the map $d_{\varepsilon\to \varepsilon'}$ is the identity on all factors except the tensor factor corresponding to the split circle where it is the comultiplication $\Delta_{\alpha, \beta}\colon L_{\alpha+\beta} \to L_{\alpha}\otimes L_{\beta}$, where $\alpha, \beta \in \pi$ are the homology classes of the split circles of $D_{\varepsilon'}$ (the definition of $\Delta_{\alpha, \beta}$ is given in Definition~$\ref{extcross}$). 
\par
In the case where one circle of $D_{\varepsilon}$ changes into another circle of $D_{\varepsilon'}$, the map $d_{\varepsilon\to \varepsilon'}$ multiplies the tensor factor corresponding to the circle of $D_{\varepsilon}$ by $\theta_{0}$. 
\par
If there exist distinct integers $i$ and $j$ such that $\varepsilon_{i}\neq \varepsilon'_{i}$ and that $\varepsilon_{i}\neq \varepsilon'_{i}$, define $d_{\varepsilon\to \varepsilon'}=0$. 
\par
Then, the differential map $d^{i}_{(A, \tau)}$ are given as 
\begin{align*}
d^{i}_{(A, \tau)}=\sum_{|\varepsilon'|=i+1}d_{\varepsilon\to \varepsilon'}. 
\end{align*}
Finally, our homology is given by 
\begin{align*}
H^{\ast}_{(A, \tau)}(F, D)=H(C^{\ast}_{(A, \tau )}(F, D), d^{i}). 
\end{align*}
In next subsection, we give some computational examples. 
\subsection{Some computations}
In this subsection, we compute our homologies of some link diagrams. 
Let $(A,\tau)$ be the HQFT constructed in Proposition~$\ref{prophqft}$. 
\begin{example}\label{computation1}
Let $(F,D)$ be the oriented link diagram depicted in Figure~$\ref{sample-1}$. 
Then we have 
\begin{align*}
H^{i}_{(A,\tau)}(F, D)=
\begin{cases}
(\mathbf{F_{2}})^{2} & \text{if\ }i=0,\\
0& \text{otherwise}. 
\end{cases}
\end{align*}
\end{example}
\begin{figure}[!h]
\begin{center}
\includegraphics[scale=0.2]{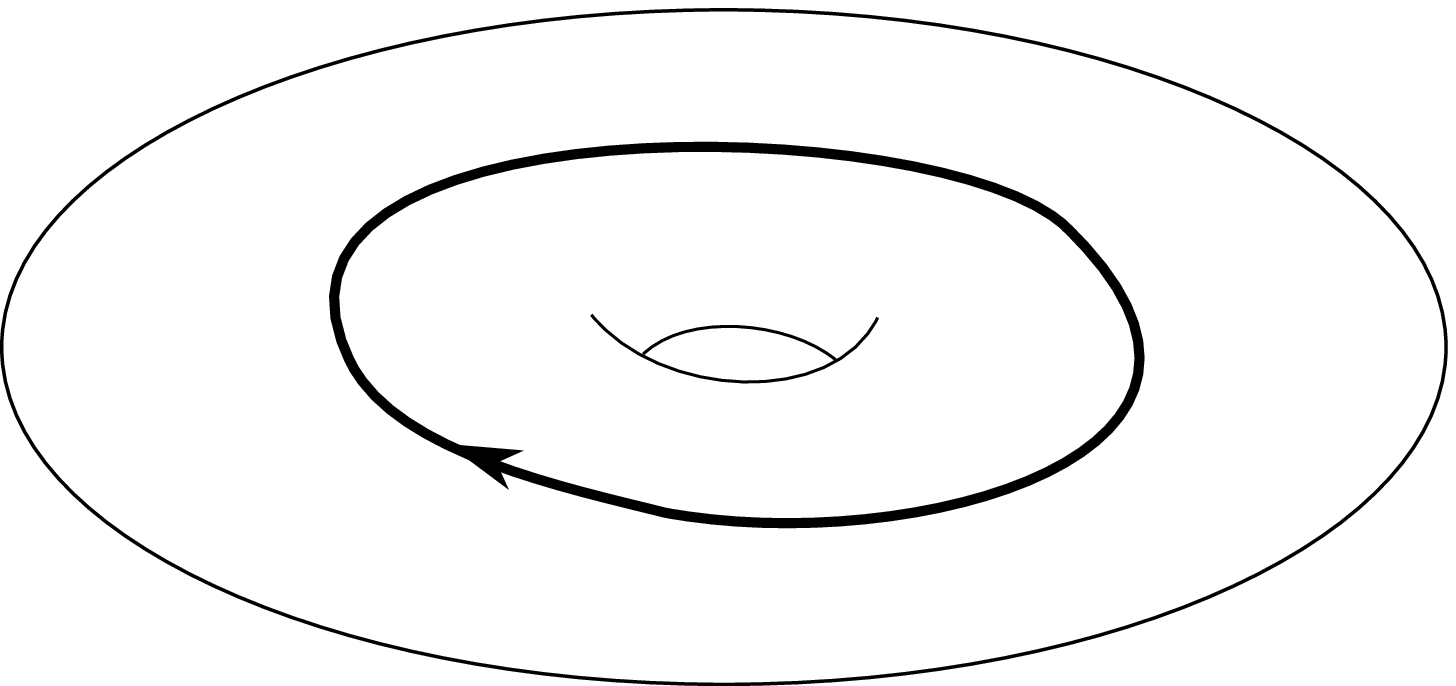}
\end{center}
\caption{Example~$\ref{computation1}$. }
\label{sample-1}
\end{figure}
%
\begin{example}\label{computation2}
Let $(F,D)$ be the oriented link diagram depicted in Figure~$\ref{sample-2}$. 
The complex $(C^{\ast}_{(A,\tau)}(F,D), d_{(A,\tau)}^{\ast})$ is as follows:  
\begin{align*}
C_{(A,\tau)}^{0}(F,D)&=L_{\alpha}\otimes L_{\alpha}, \\
C_{(A,\tau)}^{1}(F,D)&=L_{0}, \\
d_{(A,\tau)}^{0}&=m_{\alpha, \alpha}\colon L_{\alpha}\otimes L_{\alpha}\rightarrow L_{0} , 
\end{align*}
where $\alpha$ is the homology class of the longitude of $F$. 
We can compute 
\begin{align*}
\ker d_{(A,\tau)}^{0}&=\operatorname{Span}_{\mathbf{F_{2}}}\{y_{\alpha}\otimes y_{\alpha}, z_{\alpha}\otimes z_{\alpha}, y_{\alpha}\otimes z_{\alpha}+z_{\alpha}\otimes y_{\alpha}\}, \\
\operatorname{Im} d_{(A,\tau)}^{0} &=\operatorname{Span}_{\mathbf{F_{2}}}\{x\}. 
\end{align*}
Hence we have 
\begin{align*}
H^{i}_{(A,\tau)}(F, D)=
\begin{cases}
(\mathbf{F_{2}})^{3} & \text{if\ }i=0,\\
\mathbf{F_{2}} & \text{if\ }i=1,\\
0& \text{otherwise}. 
\end{cases}
\end{align*}
\end{example}
\begin{figure}[!h]
\begin{center}
\includegraphics[scale=0.3]{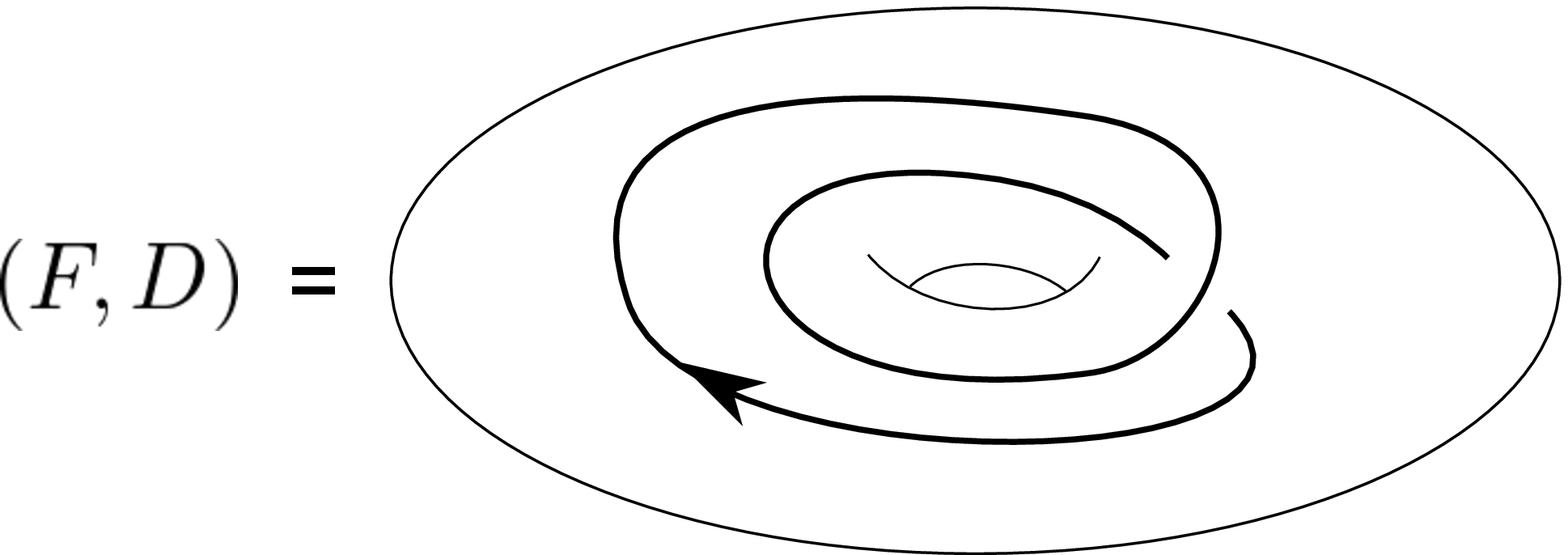}
\end{center}
\begin{center}
\vspace{10pt}
\includegraphics[scale=0.5]{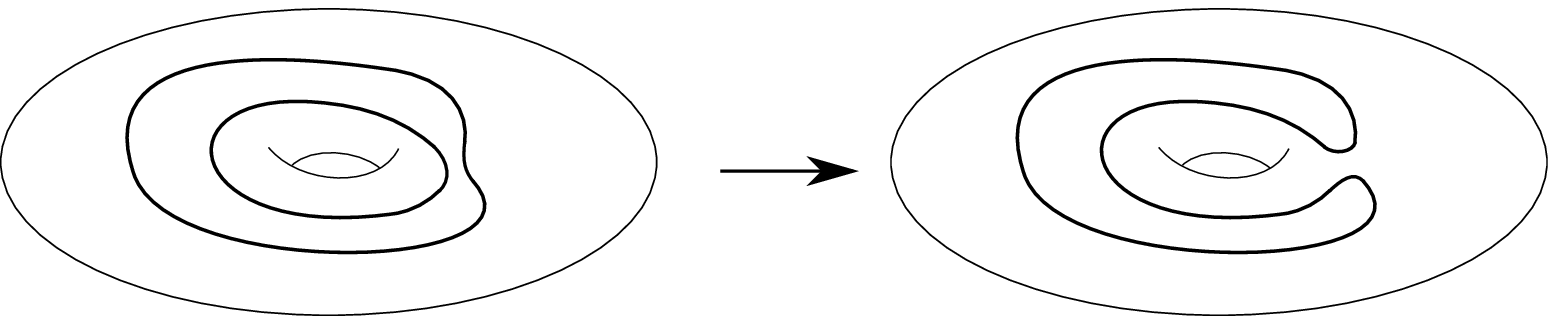}
\end{center}
\caption{Example~$\ref{computation2}$. }
\label{sample-2}
\end{figure}
\begin{example}\label{computation3}
Let $(F,D)$ be the oriented link diagram depicted in Figure~$\ref{sample-3}$. 
The complex $(C^{\ast}_{(A,\tau)}(F,D), d_{(A,\tau)}^{\ast})$ is as follows:  
\begin{align*}
C_{(A,\tau)}^{0}(F,D)&=L_{\alpha}\otimes L_{\beta}, \\
C_{(A,\tau)}^{1}(F,D)&=L_{\alpha+\beta}\oplus L_{\alpha+\beta}, \\
C_{(A,\tau)}^{2}(F,D)&=L_{\alpha+\beta}\otimes L_{0}, \\
d_{(A,\tau)}^{0}&=(m_{\alpha, \beta}, m_{\alpha, \beta})\colon L_{\alpha}\otimes L_{\beta}\rightarrow L_{\alpha+\beta}\oplus L_{\alpha+\beta} , \\
d_{(A,\tau)}^{1}&=\Delta_{\alpha, \beta}+ \Delta_{\alpha, \beta}\colon L_{\alpha+\beta}\oplus L_{\alpha+\beta}\rightarrow L_{\alpha+\beta}\otimes L_{0} , \\
\end{align*}
where $\alpha$ and $\beta$ are the homology classes of the corresponding circles. 
We can compute 
\begin{align*}
\ker d_{(A,\tau)}^{0}&=\operatorname{Span}_{\mathbf{F_{2}}}\{y_{\alpha} \otimes y_{\beta}+y_{\alpha} \otimes z_{\beta}, y_{\alpha} \otimes y_{\beta}+z_{\alpha} \otimes y_{\beta}, y_{\alpha} \otimes y_{\beta}+z_{\alpha} \otimes z_{\beta}\}, \\
\operatorname{Im} d_{(A,\tau)}^{0} &=\operatorname{Span}_{\mathbf{F_{2}}}\{ (y_{\alpha+\beta}+z_{\alpha+\beta}, y_{\alpha+\beta}+z_{\alpha+\beta})\}, \\ 
\ker d_{(A,\tau)}^{1}&=\operatorname{Span}_{\mathbf{F_{2}}}\{(y_{\alpha+\beta}, y_{\alpha+\beta}), (z_{\alpha+\beta}, z_{\alpha+\beta})\}, \\
\operatorname{Im} d_{(A,\tau)}^{1} &=\operatorname{Span}_{\mathbf{F_{2}}}\{y_{\alpha+\beta}\otimes x, z_{\alpha+\beta}\otimes x\}. 
\end{align*}
Hence we have 
\begin{align*}
H^{i}_{(A,\tau)}(F, D)=
\begin{cases}
(\mathbf{F_{2}})^{3} & \text{if\ }i=0,\\
\mathbf{F_{2}} & \text{if\ }i=1,\\
(\mathbf{F_{2}})^{2} & \text{if\ }i=2,\\
0& \text{otherwise}. 
\end{cases}
\end{align*}
\end{example}
\begin{figure}[!h]
\begin{center}
\includegraphics[scale=0.35]{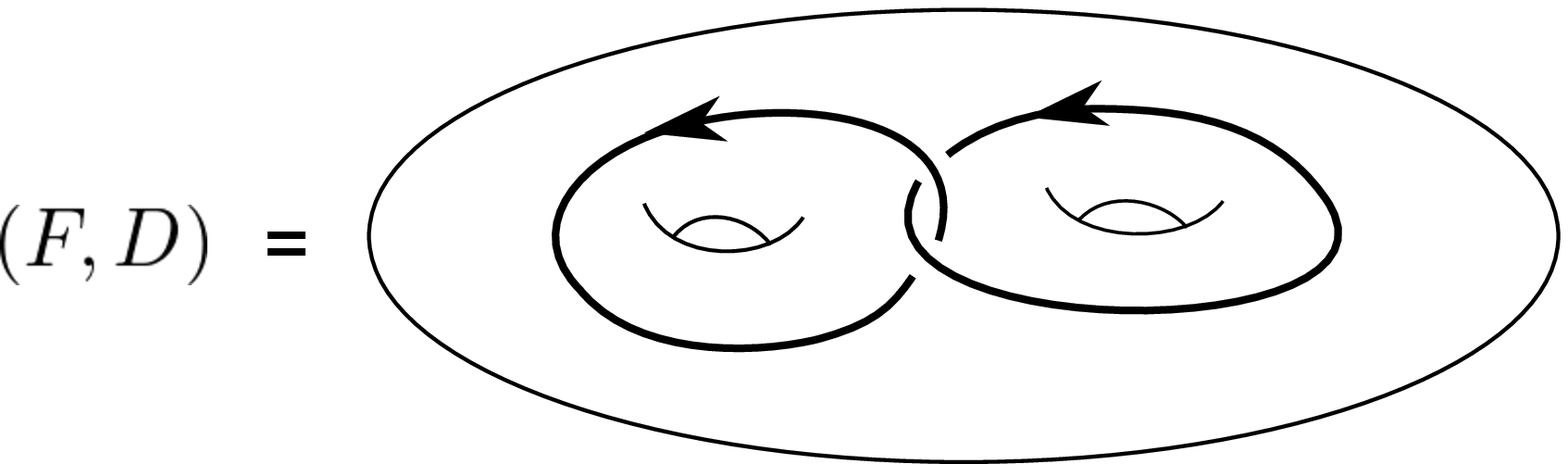}
\end{center}
\begin{center}
\vspace{10pt}
\includegraphics[scale=0.57]{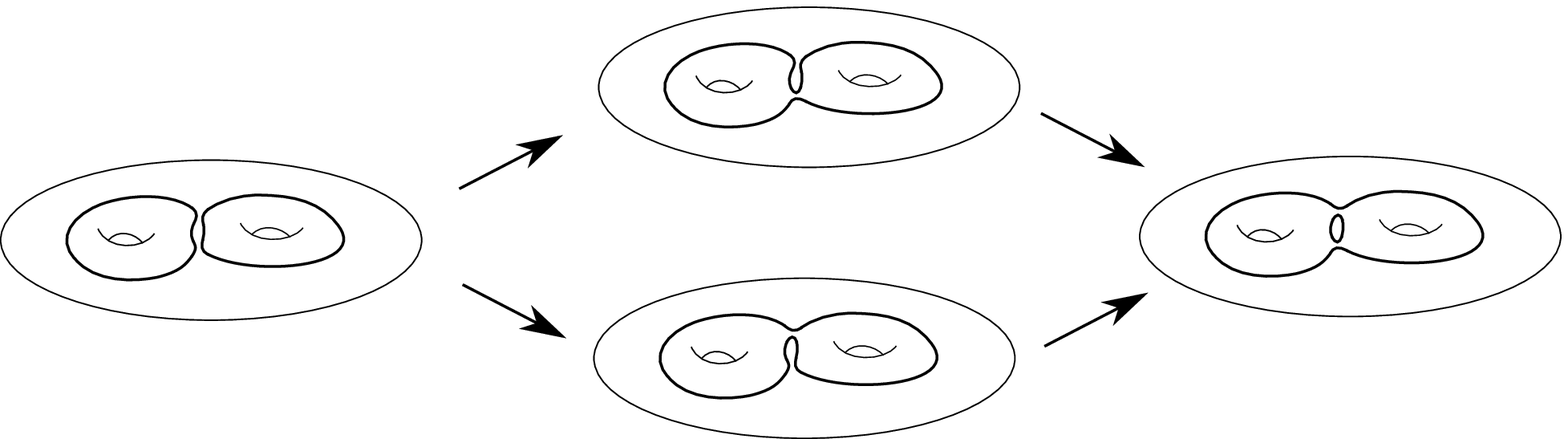}
\end{center}
\caption{Example~$\ref{computation3}$. }
\label{sample-3}
\end{figure}
\begin{example}\label{computation4}
Let $(F,D)$ be the oriented link diagram depicted in Figure~$\ref{sample-4}$. 
The complex $(C^{\ast}_{(A,\tau)}(F,D), d_{(A,\tau)}^{\ast})$ is as follows:  
\begin{align*}
C_{(A,\tau)}^{0}(F,D)&=L_{\alpha}, \\
C_{(A,\tau)}^{1}(F,D)&=L_{\alpha}\oplus L_{\alpha}, \\
C_{(A,\tau)}^{2}(F,D)&=L_{\alpha}\otimes L_{0}, \\
d_{(A,\tau)}^{0}&=(\theta_{\alpha}, \theta_{\alpha})=0\colon L_{\alpha}\rightarrow L_{\alpha}\oplus L_{\alpha} , \\
d_{(A,\tau)}^{1}&=\Delta_{\alpha, 0}+ \Delta_{\alpha, 0}\colon L_{\alpha}\oplus L_{\alpha}\rightarrow L_{\alpha}\otimes L_{0} , \\
\end{align*}
where $\alpha$ is the homology class of the corresponding circle. 
We can compute 
\begin{align*}
\ker d_{(A,\tau)}^{0}&=C_{(A,\tau)}^{0}(F,D)=L_{\alpha}, \\
\operatorname{Im} d_{(A,\tau)}^{0} &=0, \\ 
\ker d_{(A,\tau)}^{1}&=\operatorname{Span}_{\mathbf{F_{2}}}\{(y_{\alpha}, y_{\alpha}), (z_{\alpha}, z_{\alpha})\}, \\
\operatorname{Im} d_{(A,\tau)}^{1} &=\operatorname{Span}_{\mathbf{F_{2}}}\{y_{\alpha}\otimes x, z_{\alpha}\otimes x\}. 
\end{align*}
Hence we have 
\begin{align*}
H^{i}_{(A,\tau)}(F, D)=
\begin{cases}
\mathbf{F_{2}} & \text{if\ }i=0,\\
(\mathbf{F_{2}})^{2} & \text{if\ }i=1,\\
(\mathbf{F_{2}})^{2} & \text{if\ }i=2,\\
0& \text{otherwise}. 
\end{cases}
\end{align*}
\end{example}
\begin{figure}[!h]
\begin{center}
\includegraphics[scale=0.3]{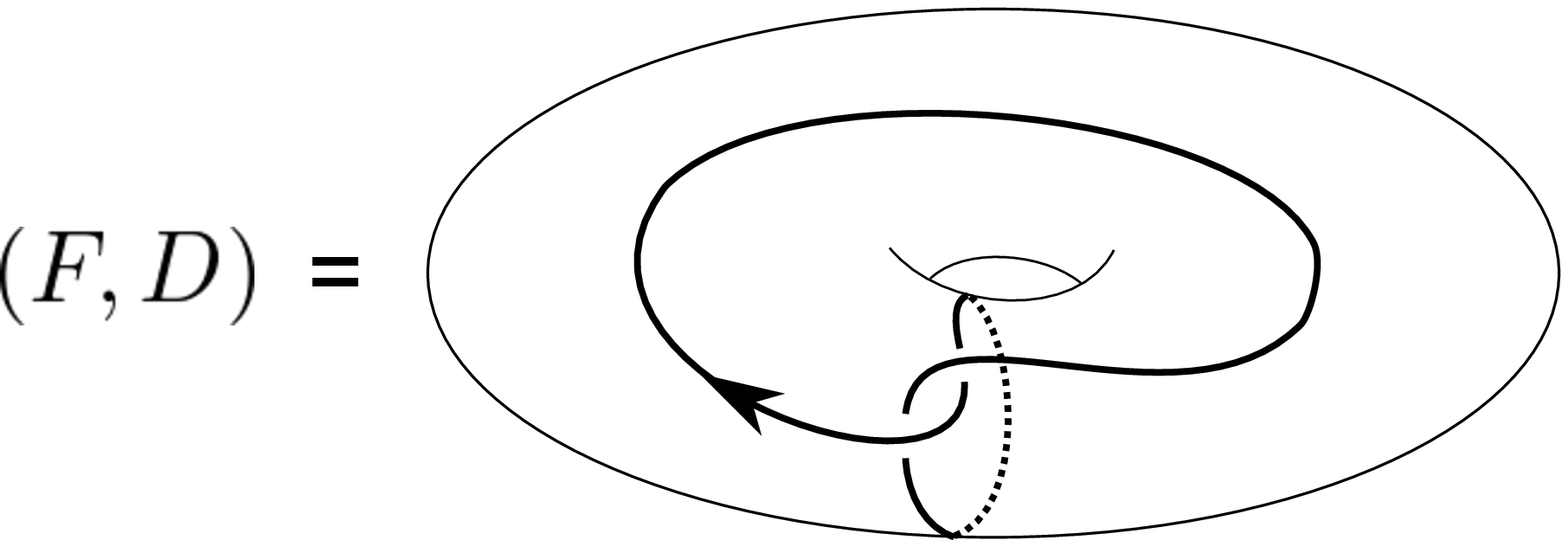}
\end{center}
\begin{center}
\vspace{10pt}
\includegraphics[scale=0.57]{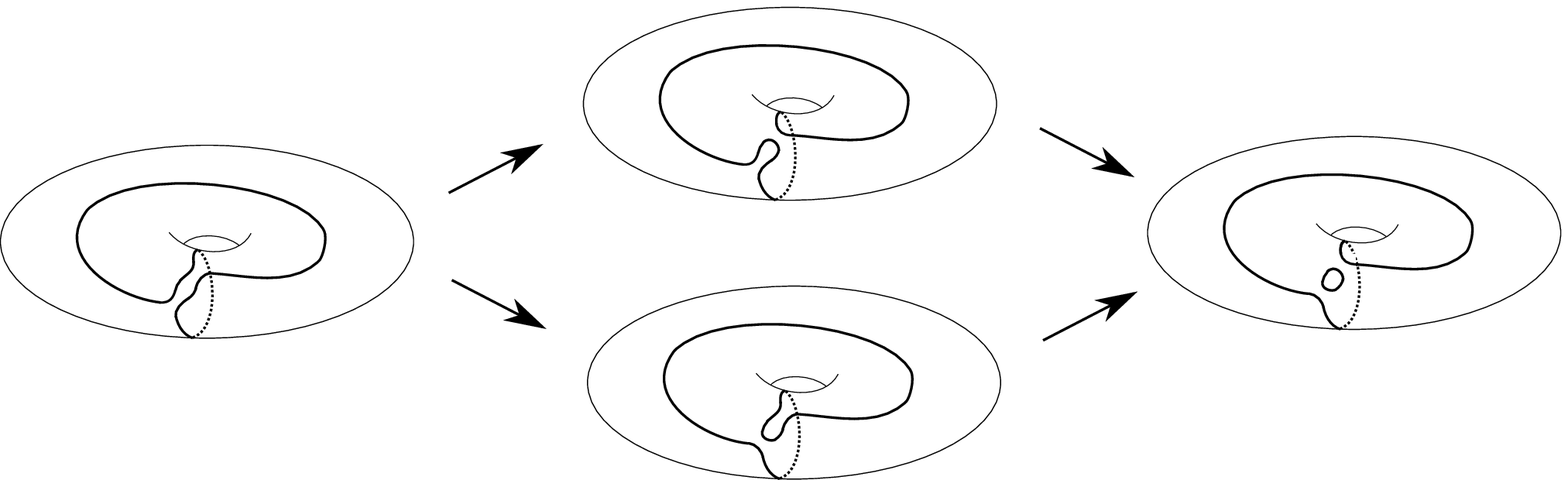}
\end{center}
\caption{Example~$\ref{computation4}$. }
\label{sample-4}
\end{figure}
\section{Remarks}\label{remark}
In this section, we make some remarks on our link homology theory. 
In the first remark, we make a remark on link diagrams on a non-orientable surface. 
In the second remark, we consider a group $G$ instead of $\pi=H_{1}(F;\mathbf{F_{2}})$. 
In the third, we give a new HQFT and show that the corresponding our link homology has ``q-grading". 
Finally, we construct a link homology which is an extension of Lee homology with coefficient $\mathbf{F_{2}}$ to strong equivalence classes. 
\begin{rem}\label{non-orientable}
Let $M$ be an oriented $I$-bundle over a compact surface $F$ (including non-orientable ones). 
Hence, $M$ is either $F\times I$ for an orientable $F$, or the twisted bundle of an non-orientable $F$. 
An oriented link in $M$ can be represented by a diagram on the surface. 
Two diagrams $(F, D)$ and $(F, D')$ represent the same link in $M$ if they are related by the following two operations: 
\begin{itemize} 
\item finite sequence of Reidemeister moves on the surfaces, 
\item homeomorphisms of the surfaces which are given by restricting orientation preserving bundle maps of $M$ to the base space $F$. 
\end{itemize}
In the case where $F$ is oriented, two link diagrams $(F, D)$ and $(F, D')$ represent the same link in $M$ if and only if $(F,D)$ is strongly equivalent to $(F, D')$. 
From the same discussion in Sections~$\ref{geometric_cpx}$ and $\ref{link_homology}$, the group $H^{i}_{(A, \tau)}(F,D)$ is an invariant of oriented links in $M$ ($F$ may be non-orientable) under ambient isotopy 
(compare this homology with \cite{link_homology_surface}). 
It also satisfies Theorems~$\ref{dual}$ and $\ref{non_vanish}$. 
\end{rem}
\begin{rem}\label{general_group}
Let $G$ be a group and $(F,D)$ a link diagram on an oriented compact surface. 
Let $q\colon H_{1}(F; \mathbf{F_{2}})\rightarrow G$ be a group homomorphism. 
Then, we can regard each smoothing $D_{\varepsilon}$ of $D$ as an unoriented closed $1$-dimensional $K(G, 1)$-manifold by assigning the element $q(\alpha)\in G$ to each circle $\Gamma$ of $D_{\varepsilon}$, where $\alpha=[\Gamma] \in H_{1}(F, \mathbf{F_{2}})$ is the homology class represented by $\Gamma$. 
Moreover, we obtain the corresponding geometric chain complex $(([[(F, D)]]_{(G, q)})^{i}, \Sigma^{i})$ (compare Section~$\ref{geometric_cpx}$). 
Obviously,  $(([[(F, D)]]_{(\{0\}, 0)})^{i}, \Sigma^{i})$ is Turaev and Turner's complex \cite[Proposition~$3.1$]{turner-turaev:2006} and $(([[(F, D)]]_{(H_{1}(F,\mathbf{F_{2}}), \operatorname{id})})^{i}, \Sigma^{i})$ is ours in Proposition~$\ref{mainprop}$. 
\end{rem}
\begin{rem}
Let $(L'=\bigoplus_{\alpha\in\pi}L'_{\alpha}, \eta', \varphi' , \Phi', \{\theta'_{\alpha}\}_{\alpha\in\pi})$ be as below. 
Then $L'$ is an extended crossed $\pi$-algebra and the corresponding HQFT $(A', \tau')$ preserves the S, T and $4$-Tu relations. 
From Theorem~$\ref{homology_hqft}$, we obtain a link homology $H_{(A', \tau')}^{\ast}$. 
\begin{itemize}
\item $\pi$ is an $\mathbf{F_{2}}$-vector space, 
\item $L'_{0}$ is the $\mathbf{F_{2}}$-vector space generated by $1$ and $x$, and $L'_{\alpha}$ is the $\mathbf{F_{2}}$-vector space generated by $y_{\alpha}$ and $z_{\alpha}$ for $0\neq\alpha\in\pi$, 
\item $\varphi'_{\alpha}:=\operatorname{id}\colon L'\rightarrow L'$ for any $\alpha\in\pi$, 
\item $\Phi:=\operatorname{id}\colon L'\rightarrow L'$, 
\item $\theta'_{\alpha}:=0$ for any $\alpha \in\pi$, 
\item for distinct elements $\alpha, \beta \in\pi\setminus \{0\}$, the multiplication $m'\colon L'\otimes L'\rightarrow L'$ is given as follows: 
\begin{align*}
m'(v\otimes 1)=m'(1\otimes v)=&v, \\
m'(x\otimes x)=&0, \\
m'(y_{\alpha}\otimes x)=m'(x\otimes y_{\alpha})=m'(z_{\alpha}\otimes x)=m'(x\otimes z_{\alpha})=&0, \\
m'(y_{\alpha}\otimes z_{\alpha})=m'(z_{\alpha}\otimes y_{\alpha})=&x, \\
m'(y_{\alpha}\otimes y_{\alpha})=m'(z_{\alpha}\otimes z_{\alpha})=&0, \\
m'(y_{\alpha}\otimes z_{\beta})=m'(z_{\alpha}\otimes y_{\beta})=&0, \\
m'(y_{\alpha}\otimes y_{\beta})=m'(z_{\alpha}\otimes z_{\beta})=&0, 
\end{align*}
\item the inner product $\eta'\colon L'_{\alpha}\otimes L'_{\alpha}\rightarrow \mathbf{F_{2}}$ is given as follows: 
\begin{align*}
\eta' (1\otimes 1)=\eta' (x\otimes x)=&0, \\
\eta' (x\otimes 1)=\eta'' (1\otimes x)=&1, \\
\eta' (y_{\alpha}\otimes y_{\alpha})=\eta' (z_{\alpha}\otimes z_{\alpha})=&0, \\
\eta' (y_{\alpha}\otimes z_{\alpha})=\eta' (z_{\alpha}\otimes y_{\alpha})=&1.  \\
\end{align*}
\end{itemize}
The comultiplication $\Delta' \colon L'\rightarrow L'\otimes L'$ and the counit $\varepsilon' \colon L'_{0}\rightarrow \mathbf{F_{2}}$ are given as follows: 
\begin{itemize}
\item $\Delta'_{0,0} \colon L'_{0}\rightarrow L'_{0}\otimes L'_{0}$ is 
\begin{align*}
\Delta'_{0,0} (1)=&1\otimes x+x\otimes 1, \\
\Delta'_{0,0} (x)=&x\otimes x, 
\end{align*}
\item $\Delta'_{\alpha,\alpha} \colon L'_{0}\rightarrow L'_{\alpha}\otimes L'_{\alpha}$ is 
\begin{align*}
\Delta'_{\alpha,\alpha} (1)=&y_{\alpha} \otimes z_{\alpha} +z_{\alpha}\otimes y_{\alpha}, \\
\Delta'_{\alpha,\alpha} (x)=&0, 
\end{align*}
\item $\Delta'_{0,\alpha} \colon L'_{\alpha}\rightarrow L'_{0}\otimes L'_{\alpha}$ is 
\begin{align*}
\Delta'_{0,\alpha} (y_{\alpha})=&x\otimes y_{\alpha}, \\
\Delta'_{0,\alpha} (z_{\alpha})=&x\otimes z_{\alpha}, 
\end{align*}
\item $\Delta'_{\alpha,0} \colon L'_{\alpha}\rightarrow L'_{\alpha}\otimes L'_{0}$ is $P\circ \Delta'_{0,\alpha}$, where $P$ is the permutation, 
\item $\Delta'_{\alpha,\beta} \colon L'_{\alpha+\beta}\rightarrow L'_{\alpha}\otimes L'_{\beta}$ is 
\begin{align*}
\Delta'_{\alpha,\beta} (y_{\alpha+\beta})=\Delta'_{\alpha,\beta} (z_{\alpha+\beta})=&0, 
\end{align*}
\item $\varepsilon' \colon L'_{0}\rightarrow \mathbf{F_{2}}$ is 
\begin{align*}
\varepsilon' (1)=&0, \\
\varepsilon' (x)=&1. 
\end{align*}
\end{itemize}
Moreover, this homology has ``q-grading" as follows: For an integer $t$, we define
\begin{align*}
\deg (1)&=1, \deg (x)=-1, \deg (y_{\alpha})=-\deg (z_{\alpha})=t, \\
H^{i, j}_{(A', \tau')}(F,D):&=\{v\in H_{(A', \tau')}^{i}(F,D)\mid \deg (v)+i+c_{+}(D)-c_{-}(D)=j\}\cup \{0\}, \\
H^{i}_{(A', \tau')}(F,D)&=\bigoplus _{j=-\infty}^{\infty}H^{i, j}_{(A', \tau')}(F,D), 
\end{align*}
where $c_{+}(D)$ and $c_{-}(D)$ are the numbers of the positive and negative crossings of $D$, respectively.  
This homology is an extension of the Khovanov homology with coefficient $\mathbf{F_{2}}$ to strong equivalence classes. 
\end{rem}
\begin{rem}
\par
Let $(L''=\bigoplus_{\alpha\in\pi}L''_{\alpha}, \eta'', \varphi'' , \Phi'', \{\theta''_{\alpha}\}_{\alpha\in\pi})$ be as below. 
Then $L''$ is an extended crossed $\pi$-algebra and the corresponding HQFT $(A'', \tau'')$ preserves the S, T and $4$-Tu relations. 
From Theorem~$\ref{homology_hqft}$, we obtain a link homology $H_{(A'', \tau'')}^{\ast}$. 
This homology is an extension of Lee homology with coefficient $\mathbf{F_{2}}$ to strong equivalence classes. 
\begin{itemize}
\item $\pi$ is an $\mathbf{F_{2}}$-vector space, 
\item $L''_{0}$ is the $\mathbf{F_{2}}$-vector space generated by $1$ and $x$, and $L''_{\alpha}$ is the $\mathbf{F_{2}}$-vector space generated by $y_{\alpha}$ and $z_{\alpha}$ for $0\neq\alpha\in\pi$, 
\item $\varphi''_{\alpha}:=\operatorname{id}\colon L''\rightarrow L''$ for any $\alpha\in\pi$, 
\item $\Phi'':=\operatorname{id}\colon L''\rightarrow L''$, 
\item $\theta''_{\alpha}:=0$ for any $\alpha \in\pi$, 
\item for distinct elements $\alpha, \beta \in\pi\setminus \{0\}$, the multiplication $m''\colon L''\otimes L''\rightarrow L''$ is given as follows: 
\begin{align*}
m''(v\otimes 1)=m''(1\otimes v)=&v, \\
m''(x\otimes x)=&1, \\
m''(y_{\alpha}\otimes x)=m''(x\otimes y_{\alpha})=&y_{\alpha}, \\
m''(z_{\alpha}\otimes x)=m''(x\otimes z_{\alpha})=&z_{\alpha}, \\
m''(y_{\alpha}\otimes z_{\alpha})=m''(z_{\alpha}\otimes y_{\alpha})=&x+1, \\
m''(y_{\alpha}\otimes y_{\alpha})=m''(z_{\alpha}\otimes z_{\alpha})=&0, \\
m''(y_{\alpha}\otimes z_{\beta})=m''(z_{\alpha}\otimes y_{\beta})=&0, \\
m''(y_{\alpha}\otimes y_{\beta})=m''(z_{\alpha}\otimes z_{\beta})=&0, 
\end{align*}
\item the inner product $\eta''\colon L''_{\alpha}\otimes L''_{\alpha}\rightarrow \mathbf{F_{2}}$ is given as follows: 
\begin{align*}
\eta'' (1\otimes 1)=&0, \\
\eta'' (x\otimes 1)=\eta'' (1\otimes x)=&1, \\
\eta'' (x\otimes x)=&0, \\
\eta'' (y_{\alpha}\otimes y_{\alpha})=\eta'' (z_{\alpha}\otimes z_{\alpha})=&0 ,\\
\eta'' (y_{\alpha}\otimes z_{\alpha})=\eta'' (z_{\alpha}\otimes y_{\alpha})=&1. \\
\end{align*}
\end{itemize}
The comultiplication $\Delta'' \colon L''\rightarrow L''\otimes L''$ and the counit $\varepsilon'' \colon L''_{0}\rightarrow \mathbf{F_{2}}$ are given as follows: 
\begin{itemize}
\item $\Delta''_{0,0} \colon L''_{0}\rightarrow L''_{0}\otimes L''_{0}$ is 
\begin{align*}
\Delta''_{0,0} (1)=&1\otimes x+x\otimes 1, \\
\Delta''_{0,0} (x)=&x\otimes x+1\otimes 1, 
\end{align*}
\item $\Delta''_{\alpha,\alpha} \colon L''_{0}\rightarrow L''_{\alpha}\otimes L''_{\alpha}$ is 
\begin{align*}
\Delta''_{\alpha,\alpha} (1)=&y_{\alpha} \otimes z_{\alpha} +z_{\alpha}\otimes y_{\alpha}, \\
\Delta''_{\alpha,\alpha} (x)=&y_{\alpha} \otimes z_{\alpha} +z_{\alpha}\otimes y_{\alpha}, 
\end{align*}
\item $\Delta''_{0,\alpha} \colon L''_{\alpha}\rightarrow L''_{0}\otimes L''_{\alpha}$ is 
\begin{align*}
\Delta''_{0,\alpha} (y_{\alpha})=&x\otimes y_{\alpha}, \\
\Delta''_{0,\alpha} (z_{\alpha})=&x\otimes z_{\alpha}, 
\end{align*}
\item $\Delta''_{\alpha,0} \colon L''_{\alpha}\rightarrow L''_{\alpha}\otimes L''_{0}$ is $P\circ \Delta''_{0,\alpha}$, where $P$ is the permutation, 
\item $\Delta''_{\alpha,\beta} \colon L''_{\alpha+\beta}\rightarrow L''_{\alpha}\otimes L''_{\beta}$ is 
\begin{align*}
\Delta''_{\alpha,\beta} (y_{\alpha+\beta})=\Delta''_{\alpha,\beta} (z_{\alpha+\beta})=&0, 
\end{align*}
\item $\varepsilon'' \colon L''_{0}\rightarrow \mathbf{F_{2}}$ is 
\begin{align*}
\varepsilon'' (1)=&0, \\
\varepsilon'' (x)=&1. 
\end{align*}
\end{itemize}
\end{rem}
{\bf Acknowledgements: } The author would like to thank Hitoshi Murakami and T{\' a}m{\' a}s Kalm{\' a}n for their encouragements and helpful comments. 
Jae Choon Cha gave him some useful comments about Remark~$\ref{general_group}$ at The $9$th East Asian School of Knots and Related Topics. 
He is also grateful to Jae Choon Cha. 
He also would like to thank the referee. 
He was supported by JSPS KAKENHI Grant Number 25001362. 
%
\bibliographystyle{amsplain}
\bibliography{mrabbrev,tagami}
\end{document}